\newcommand*{\doublerightarrow}[2]{\mathrel{
  \settowidth{\@tempdima}{$\scriptstyle#1$}
  \settowidth{\@tempdimb}{$\scriptstyle#2$}
  \ifdim\@tempdimb>\@tempdima \@tempdima=\@tempdimb\fi
  \mathop{\vcenter{
    \offinterlineskip\ialign{\hbox to\dimexpr\@tempdima+1em{##}\cr
    \rightarrowfill\cr\noalign{\kern.5ex}
    \rightarrowfill\cr}}}\limits^{\!#1}_{\!#2}}}
\def\aa{{\rm a}}
\def\ee{{\rm e}}
\def\ii{{\rm i}}
\def\Zz{{\rm Z}}
\def\cZ{Z}
\def\cQ{Q}
\def\cG{G}
\def\Grp{{\bf Grp}}
\def\RMod{{}_R{\bf{Mod}}}
\def\AMod{{}_A{\bf{Mod}}}
\def\Rng{{\bf Rng}}
\begin{document}
\subjclass[2010]{11R29, 16H20, 16K20, 18E35}
\title[Commensurability of automorphism groups]
{Commensurability of automorphism groups}
\author{Alex Bartel}
\address{Mathematics Institute, Zeeman Building, University of Warwick,
Coventry CV4 7AL, UK}
\thanks{{\it Keywords.} Calculus of fractions, Cohen--Lenstra heuristics,
commensurability, ring isogenies,
semisimple rings.}
\thanks{The first author gratefully acknowledges the financial support of
the Royal Commission for the Exhibition of 1851.}
\author{Hendrik W. Lenstra Jr.}
\address{Mathematisch Instituut, Universiteit Leiden, Postbus 9512, 2300 RA Leiden, The Netherlands}
\date{\today}

\maketitle
\begin{abstract}
We develop a theory of commensurability
of groups, of rings, and of modules. It allows us, in certain cases, to
compare sizes of automorphism groups of modules, even when those are infinite.
This work is motivated by the Cohen--Lenstra heuristics on class groups.
\end{abstract}

\section{Introduction}\noindent
Often, when a mathematical object is drawn in some ``random''
manner, the probability that it is isomorphic to a given object is
inversely proportional to the size of the automorphism
group of the latter. The Cohen--Lenstra
heuristics \cite{CL84,CM90}, which make predictions on the
distribution of class groups of ``random'' algebraic number
fields, are, as we intend to show, a special case of this rule, provided that
one passes to Arakelov class groups. Now, Arakelov class groups
may have infinitely many automorphisms, so a difficulty arises
in comparing the sizes of their automorphism groups. This difficulty is
resolved in the present paper. We will address the number-theoretic
implications in a later one.

Our main result, formulated as Theorem~\ref{thm:mainintro}
below, expresses that, for certain
pairs of modules $L$ and $M$ over certain types of ring,
one can meaningfully define the ratio of the size of the
automorphism group $\Aut M$ of $M$ to the size of $\Aut L$,
even when their orders $\#\Aut M$ and $\#\Aut L$ are infinite.
If $\Aut L$ can be naturally embedded in $\Aut M$ as a
subgroup of finite index, then the ratio mentioned may be
defined to be that index. Our approach consists of giving a canonical
definition of an ``index of automorphism groups'', to be denoted
by $\ia(L,M)$, in a more general situation.

As a concrete example, we consider modules over group rings.
Denote by $\Z$ the ring of integers, by $\Q$ the field of
rational numbers, by $\Q_{>0}$ the multiplicative group of
positive rational numbers, by $R[G]$ the group ring of a
group $G$ over a ring~$R$, and by $(G:H)$ the index of a
subgroup $H$ of a group~$G$. By ``module'' we shall always
mean ``left module''.
\begin{theorem}\label{thm:QGintro}
Let $G$ be a finite group, let $V$ be a finitely generated
$\Q[G]$-module, and put $\cS=\{L:L$ is a finitely generated
$\Z[G]$-module with $\Q\otimes_\Z L\cong V$ as
$\Q[G]$-modules$\}$. Then there exists a unique function
$\ia\colon\cS\times\cS\to\Q_{>0}$ such that
\begin{enumerate}[leftmargin=*, label={\upshape(\alph*)}]
\item if $L$, $L'$, $M$, $M'\in\cS$ and $L\cong L'$,
$M\cong M'$, then $\ia(L,M)=\ia(L',M')$;
\item if $L$, $M$, $N\in\cS$, then $\ia(L,M)\cdot\ia(M,N)=
\ia(L,N)$;
\item if $M\in\cS$, and $L\subset M$ is a submodule of
finite index, then with $H=\{\sigma\in\Aut M:\sigma L=L\}$ and
$\rho\colon H\to\Aut L$ mapping $\sigma\in H$ to $\sigma|L$, one
has
$$\ia(L,M)={\frac{(\Aut M:H)\cdot\#\ker\rho}{(\Aut L:\rho H)}}.$$
\end{enumerate}
\end{theorem}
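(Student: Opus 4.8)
The plan is to reduce everything to two statements about finite-index inclusions. Write $\delta(A,B)$ for the rational number on the right-hand side of~(c), defined whenever $A\subseteq B$ are in $\cS$ with $A$ of finite index in $B$; more generally, for an injective module homomorphism $P\hookrightarrow M$ with finite cokernel I write $\delta(P,M)$ for the same expression with $P$ replaced by its image. First I would check that $\delta(A,B)\in\Q_{>0}$. Finiteness of $(\Aut B:H)$ is easy: with $n=[B:A]$ one has $nB\subseteq A$, so $A$ corresponds to a submodule of the finite module $B/nB$ and thus has finite orbit under $\Aut B$. Finiteness of $(\Aut A:\rho H)$ takes a little more: in the torsion-free case, $\Aut A$ maps onto a subgroup $\Gamma\subseteq\Aut_{\Q[G]}(\Q\otimes_\Z A)$ for which the orbit of the lattice $B$ is squeezed between $nB$ and $n^{-1}B$ and hence finite, while $\rho H$ is precisely the preimage of the stabiliser of $B$ in $\Gamma$; the presence of torsion is handled by passing to $A/A_{\mathrm{tors}}$ and noting that the relevant correction group of module homomorphisms is finite.

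Everything then rests on: \textbf{(I)} if $A\subseteq B\subseteq C$ in $\cS$ with all indices finite, then $\delta(A,C)=\delta(A,B)\,\delta(B,C)$; and \textbf{(II)} any $L,M\in\cS$ admit a common sublattice, i.e.\ there are $P\in\cS$ and embeddings $P\hookrightarrow L$, $P\hookrightarrow M$ with finite cokernels. I would prove~(II) in three steps. Given $L\in\cS$ with torsion submodule $T$ and lattice $\Lambda=L/T$: pulling back the extension $0\to T\to L\to\Lambda\to0$ along multiplication by an integer $m$ that annihilates the finite group $\Ext^1_{\Z[G]}(\Lambda,T)$ produces a split extension, i.e.\ a $\Z[G]$-module $\cong T\oplus\Lambda$, that maps injectively onto the finite-index submodule $\{x\in L:x+T\in m\Lambda\}$ of $L$. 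Next, fixing isomorphisms $\Q\otimes_\Z(L/T_L)\cong V\cong\Q\otimes_\Z(M/T_M)$ realises $L/T_L$ and $M/T_M$ as full $\Z[G]$-lattices in $V$, whose intersection $\Lambda_0$ is a full lattice of finite index in each. Finally, $\Lambda_0$ has finite index in $T\oplus\Lambda_0$ for any finite $\Z[G]$-module $T$ (cokernel $T$), so $P:=\Lambda_0$ embeds with finite cokernel into a finite-index submodule of $L$ isomorphic to $T_L\oplus(L/T_L)$, hence into $L$; likewise into $M$. The same three steps (and, when $V=0$, the remark that $T_1\hookrightarrow T_1\oplus T_2\hookleftarrow T_2$) show more generally that any two members of $\cS$ are joined by a chain whose consecutive terms stand, up to isomorphism, in the finite-index submodule relation.

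Granting (I) and (II): \emph{uniqueness} is then immediate, because along such a chain $L=L_0,\dots,L_k=M$ every $\ia(L_i,L_{i+1})$ is forced by (a) and~(c) and the whole product by~(b). For \emph{existence}, put $\ia(L,M):=\delta(P,M)/\delta(P,L)$ for a common sublattice $P$; independence of the choice of $P$ and of the embeddings follows from~(I) by refining two choices to a common finite-index submodule inside $L$, resp.\ inside $M$. Property~(a) is then clear; (c) follows by taking $P$ inside $L$ and applying~(I) to $P\subseteq L\subseteq M$; and (b) follows by choosing one $P$ that is simultaneously a common sublattice of $L$, $M$ and $N$ (the three-step argument applies, intersecting three lattices) and computing $\ia(L,M)\,\ia(M,N)=\tfrac{\delta(P,M)}{\delta(P,L)}\cdot\tfrac{\delta(P,N)}{\delta(P,M)}=\tfrac{\delta(P,N)}{\delta(P,L)}=\ia(L,N)$.

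The main obstacle is (I), the multiplicativity of $\delta$ along nested finite-index inclusions. I expect that the clean route to it is the one the rest of the paper is built for: invert all finite-index inclusions by a calculus of fractions so that each becomes an isomorphism, and exhibit $\delta$ as the effect on automorphism groups of a functor on the resulting category, whereupon (I) is mere functoriality applied to a composite. A hands-on proof is also possible but needs a careful analysis of how $\Aut A$, $\Aut B$, $\Aut C$ and their actions on the relevant finite sets of submodules fit together, and that is where the real work lies.
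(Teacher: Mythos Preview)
Your framework for uniqueness and your argument for (II) are fine, and in spirit close to the paper's. The gap is in the existence half, specifically in the sentence ``independence of the choice of $P$ and of the embeddings follows from~(I) by refining two choices to a common finite-index submodule inside $L$, resp.\ inside $M$.'' That refinement argument is circular. Suppose you fix $P$ and compare two embeddings $\iota_1,\iota_2\colon P\hookrightarrow L$. Intersecting the images inside $L$ gives a finite-index submodule $Q\subseteq L$, and (I) yields
\[
\frac{\delta(\iota_1)}{\delta(\iota_2)}=\frac{\delta(Q,\iota_2(P))}{\delta(Q,\iota_1(P))}
=\frac{\delta(\iota_2^{-1}(Q),P)}{\delta(\iota_1^{-1}(Q),P)},
\]
which is the ratio of $\delta$'s for two \emph{different} embeddings of $Q$ into $P$ --- the same problem one level down. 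Nothing forces this ratio to be~$1$. In fact (I) holds without semisimplicity (it is the paper's Theorem~\ref{thm:compos}(b)), yet Example~\ref{ex:notss} exhibits a non-semisimple $A$ and two finite-index embeddings $L\hookrightarrow L$ with different values of $\delta$; so (I) alone \emph{cannot} imply embedding-independence of $\delta$, and hence cannot imply well-definedness of $\ia$.

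You have therefore misplaced the main obstacle. Multiplicativity along nested inclusions is the formal step (functoriality of $c\mapsto\aa(c)$ under composition, valid for any left-noetherian $R$). The genuinely hard step --- and the only place semisimplicity is used --- is precisely the one you treat as routine: that $\delta(P,M)$ depends only on the isomorphism classes of $P$ and $M$, not on the embedding. In the paper's language this is the statement that the group homomorphism $\cG_L\to\Q_{>0}$, $c\mapsto\ii(\aa(c))$, is trivial. The paper proves it by identifying $\cG_L$ with $B^\times$ for $B=\End_A(\Q\otimes_\Z L)$, showing the map already kills $\Zz(B)^\times$ (Proposition~\ref{prop:throughcentre}), and then invoking Theorem~\ref{thm:X}: for $B$ semisimple and finite over its centre, $B^\times/(\Zz(B)^\times[B^\times,B^\times])$ has finite exponent, whence any homomorphism to the torsion-free group $\Q_{>0}$ is trivial. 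Your final paragraph gestures toward this machinery but aims it at~(I) rather than at the step that actually needs it.
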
\noindent
To explain part (c), we remark that it is not hard to show that
one has $L\in\cS$, and that the three cardinal numbers $(\Aut M:H)$,
$\#\ker\rho$, $(\Aut L:\rho H)$ are finite
(see Section~\ref{sec:autcomm}). Since
these three numbers may be thought of as the ratio of the sizes of
$\Aut M$ and $H$, of $H$ and $\rho H$, and of $\Aut L$ and $\rho H$,
respectively, one may think of the expression in (c) as the ratio of
the sizes of $\Aut M$ and $\Aut L$. The same argument shows that one
has indeed $\ia(L,M)=(\#\Aut M)/\#\Aut L$ if $\Aut M$ and $\Aut L$
are finite.

As an example, let $G$ be the trivial group, and put $n=\dim_\Q V$.
Then each $L\in\cS$ is isomorphic to the direct sum of $\Z^n$ with
a finite abelian group $L_0$, and $\Aut L$ is isomorphic to a
semidirect product $\Hom(\Z^n,L_0)\rtimes(\Aut L_0\times\GL(n,\Z))$,
where both $\Hom(\Z^n,L_0)$ and $\Aut L_0$ are finite. Writing
$M\in\cS$ similarly, and ``cancelling'' $\GL(n,\Z)$, one is led to
believe that
$$\ia(L,M)={\frac{\#\Hom(\Z^n,M_0)\cdot\#\Aut
M_0}{\#\Hom(\Z^n,L_0)\cdot\#\Aut L_0}}={\frac{(\#M_0)^n\cdot\#\Aut
M_0}{(\#L_0)^n\cdot\#\Aut L_0}}.$$
Making this informal argument rigorous (see Proposition \ref{prop:fingengps}),
one discovers that if a
function as in Theorem~\ref{thm:QGintro} exists, it must be given by the
formula just stated. However, that this formula does define a
function meeting all conditions, in particular (c), is not obvious.
Likewise, for general $G$ the uniqueness statement of Theorem~\ref{thm:QGintro} is
easy by comparison to the existence statement. Our proof of
Theorem~\ref{thm:QGintro} is given in Section~\ref{sec:proofs}.

There is little doubt that one can prove Theorem~\ref{thm:QGintro} using a suitable
theory of covolumes of arithmetic groups. Instead, we will give an
entirely algebraic proof, obtaining the theorem as a special case of
a much more general result, of which the formulation requires some
terminological preparation.

{\it Isogenies.} A {\it group isogeny\/} is a group homomorphism
$f\colon H\to G$ with $\#\ker f<\infty$ and $(G:fH)<\infty$, and its
{\it index\/} $\ii(f)$ is defined to be $(G:fH)/\#\ker f$. For
a ring $R$, an $R${\it-module isogeny\/} is an $R$-module homomorphism that
is an isogeny as a map of additive groups. A {\it ring isogeny\/}
is a ring homomorphism that is an isogeny as a map of additive
groups. The index of an isogeny of one of the latter two types is defined as
the index of the induced group isogeny on the additive groups.

{\it Commensurabilities.} If $X$, $Y$ are objects of a category
$\cC$, then a {\it correspondence\/} from $X$ to $Y$ in $\cC$ is
a triple $c=(W,f,g)$, where $W$ is an object of $\cC$ and
$f\colon W\to X$ and $g\colon W\to Y$ are morphisms in~$\cC$; we
will often write $c\colon X\rightleftharpoons Y$ to indicate a
correspondence. A {\it group commensurability\/} is a
correspondence $c=(W,f,g)$ in the category of groups for which
both $f$ and $g$ are isogenies, and the {\it index\/} $\ii(c)$ of
such an isogeny is defined to be $\ii(g)/\ii(f)$. For a ring $R$,
one defines $R$-module commensurabilities and their indices
analogously, replacing the category of groups by the category of
$R$-modules. Likewise, one defines ring commensurabilities and
their indices.

{\it Endomorphisms and automorphisms.} Let $R$ be a ring, and
let $c=\!(N,f,g)\colon$
$L\rightleftharpoons M$ be a correspondence
of $R$-modules. We define the {\it endomorphism ring\/} $\End c$
of $c$ to be the subring $\{(\lambda,\nu,\mu)\in(\End
L)\times(\End N)\times(\End M):\lambda f=f\nu$, $\mu
g=g\nu\}$ of the product ring $(\End L)\times(\End
N)\times(\End M)$.  There are natural ring homomorphisms
$\End c\to\End L$ and $\End c\to\End M$ sending
$(\lambda,\nu,\mu)$ to $\lambda$ and $\mu$, respectively;
we shall write $\ee(c)\colon\End L\rightleftharpoons\End M$ for
the ring correspondence consisting of $\End c$ and those two ring
homomorphisms. Similarly, writing $E^\times$ for the multiplicative
group of invertible elements of a ring $E$, we define the
{\it automorphism group\/} $\Aut c$ of $c$ to be the group
$(\End c)^\times$, and we write $\aa(c)\colon\Aut
L\rightleftharpoons\Aut M$ for the group correspondence
consisting of $\Aut c$ and the natural maps $\Aut c\to\Aut L$,
$\Aut c\to\Aut M$.

A \emph{domain} is a non-zero commutative ring in which the product
of any two non-zero elements is non-zero. A ring is \emph{semisimple} if all short
exact sequences of modules over the ring split.

We can now formulate the general result that we announced.

\begin{theorem}\label{thm:mainintro}
Let $Z$ be an infinite domain such that for all non-zero $m\in Z$ the ring
$Z/mZ$ is finite, let $Q$ be the field of fractions of $Z$, let
$A$ be a semisimple $Q$-algebra of finite vector space dimension
over $Q$, let $R\subset A$ be a sub-$Z$-algebra with $Q\cdot R=A$,
and let $L$, $M$ be finitely generated $R$-modules. Then:
\begin{enumerate}[leftmargin=*, label={\upshape(\alph*)}]
\item there is an $R$-module commensurability
$L\rightleftharpoons M$ if and only if the $A$-modules
$Q\otimes_ZL$ and $Q\otimes_ZM$ are isomorphic;
\item if $c\colon L\rightleftharpoons M$ is an $R$-module
commensurability, then $\ee(c)\colon\End L\rightleftharpoons\End M$
is a ring commensurability, and $\aa(c)\colon \Aut
L\rightleftharpoons\Aut M$ is a group commensurability;
\item if $c$, $c'\colon L\rightleftharpoons M$ are
$R$-module commensurabilities, then one has
$$\ii(\ee(c))=\ii(\ee(c')),\;\;\;\;\; \ii(\aa(c))=\ii(\aa(c')).$$
\end{enumerate}
\end{theorem}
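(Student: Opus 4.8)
The plan is to move everything into the semisimple $\Q$-algebra $E=\End_A(\Q\otimes_Z L)$ and reduce to a computation with lattices there; the two engines are a calculus of fractions for isogenies and the unimodularity of inner automorphisms of semisimple $\Q$-algebras.

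For \emph{part (a)}, one implication is immediate: applying $-\otimes_Z\Q$ to an $R$-module isogeny kills its (finite) kernel and cokernel, so each leg of a commensurability becomes an $A$-isomorphism, and composing gives $\Q\otimes_Z L\cong\Q\otimes_Z M$. Conversely, identify $V=\Q\otimes_Z L=\Q\otimes_Z M$ and let $\bar L,\bar M\subseteq V$ be the images of $L$ and $M$; these are finitely generated $R$-submodules spanning $V$, hence commensurable after clearing denominators, and pulling the inclusion of a common sublattice back along the surjections $L\to\bar L$, $M\to\bar M$ (whose kernels are finite, by the finiteness results already in hand) yields a commensurability $L\rightleftharpoons M$. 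I would moreover establish the sharper statement that $c=(N,f,g)\mapsto(\Q\otimes_Z g)\circ(\Q\otimes_Z f)^{-1}$ is a bijection between $R$-module commensurabilities $L\rightleftharpoons M$ modulo the evident equivalence and $A$-module isomorphisms $\Q\otimes_Z L\to\Q\otimes_Z M$, compatibly with composition of correspondences and with reversal; this is the main bookkeeping tool for (b) and (c).

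For \emph{part (b)} I would first check that isogenies of groups, modules and rings admit a calculus of left and right fractions, so that commensurabilities compose via fibre products, and that $\ee$ and $\aa$ respect this composition and reversal and depend only on the equivalence class of a commensurability; this is diagram chasing, together with the observation that $\ii$ is multiplicative along composites of commensurabilities (the correction factor arising in a fibre product of isogenies cancels between the two legs). It is then enough to treat $c$ in a convenient normal form — realised through the bijection of (a) as $(N,\iota,g)$ with $N$ a common sublattice and $\iota$ an inclusion, equivalently factored into a finite-kernel surjection followed by a finite-index inclusion. In such a form $\End c$ is visibly the ring of elements of $E$ stabilising two commensurable lattices, and one reads off that both projections $\End c\to\End L$ and $\End c\to\End M$ have finite kernel and finite-index image; hence $\ee(c)$ is a ring commensurability. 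Passing to unit groups, the kernels remain finite (a finite two-sided ideal contains only finitely many units congruent to $1$), and finiteness of $(\Aut L:\rho\Aut c)$ follows from a lemma on orders in $E$: if $\mathcal{O}'\subseteq\mathcal{O}$ are of finite index, the largest two-sided ideal of $\mathcal{O}$ lying in $\mathcal{O}'$ has finite index in $\mathcal{O}$ and $1$ plus it lands in $(\mathcal{O}')^{\times}$, so $(\mathcal{O}^\times:(\mathcal{O}')^\times)$ divides the order of the unit group of a finite quotient ring. Thus $\aa(c)$ is a group commensurability.

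For \emph{part (c)}, the bijection of (a) together with the multiplicativity in (b) makes $\ii\circ\ee$ and $\ii\circ\aa$ descend to multiplicative functions on $A$-module isomorphisms, so it suffices to show the resulting homomorphisms $\Aut_A(V)\to\Q_{>0}$ are trivial for every finitely generated $A$-module $V$ — equivalently, that every self-commensurability $L\rightleftharpoons L$ has $\ee$- and $\aa$-index $1$. (Torsion is harmless: one factors $c\colon L\rightleftharpoons M$ through the surjections $L\to\bar L$, $M\to\bar M$, whose $\ee$- and $\aa$-indices depend only on $L$ and on $M$ and so drop out.) Assume $L$ torsion-free, put $\mathcal{O}=\End_R L\subseteq E$, and realise $\gamma\in\Aut_A(V)$ by $N=L\cap\gamma^{-1}L$ with the inclusion $N\hookrightarrow L$ and the restriction of $\gamma$ to $N$; a short computation identifies $\End c$ with $\mathcal{O}\cap\gamma^{-1}\mathcal{O}\gamma$, the two projections to $\End L=\mathcal{O}$ being the inclusion and conjugation by $\gamma$. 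For $\ee$ this gives $\ii(\ee(c))=(\mathcal{O}:\gamma\mathcal{O}\gamma^{-1}\cap\mathcal{O})/(\mathcal{O}:\mathcal{O}\cap\gamma^{-1}\mathcal{O}\gamma)=\lvert\det_Q(x\mapsto\gamma x\gamma^{-1})\rvert$, which is $1$ because an inner automorphism of a semisimple $\Q$-algebra is unimodular: its $\Q$-determinant is $\det_Q(L_\gamma)/\det_Q(R_\gamma)$ with $L_\gamma,R_\gamma$ left and right multiplication by $\gamma$, and these two determinants coincide, being given by the same reduced-norm formula. For $\aa$ the same computation yields $\ii(\aa(c))$ equal to the ratio of the number of left $\mathcal{O}^\times$-cosets to the number of right $\mathcal{O}^\times$-cosets in the double coset $\mathcal{O}^\times\gamma\mathcal{O}^\times$; to see this is $1$ I would put the double coset into standard form via elementary-divisor theory over $\mathcal{O}$ (passing to a maximal order, or using an anti-involution of $E$), after which replacing $\gamma$ by $\gamma^{-1}$ merely inverts the invariant factors and changes neither count. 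I expect this $\aa$-case — making the ``unit-group monodromy'' vanish, without the unimodularity crutch available for $\ee$ — to be the principal obstacle; once it is cleared, the $\ee$-statement is essentially the one-line determinant identity, and (a) and (b) are formal given the calculus of fractions.
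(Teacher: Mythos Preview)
Parts (a) and (b) are correct and track the paper closely. The gap is in (c), specifically the $\aa$-case. Your proposed tools --- elementary-divisor theory over $\mathcal{O}$, passage to a maximal order, or an anti-involution of $E$ --- do not work in the stated generality: $\mathcal{O}=\End_R L$ is an arbitrary subring of the semisimple $Q$-algebra $E=\End_A(Q\otimes_Z L)$, over which there is no Smith normal form; $Z$ is not assumed Dedekind, so the classical theory of maximal orders is unavailable; and $E$ need not admit any anti-involution at all (its simple factors need not have $2$-torsion Brauer class), let alone one stabilising $\mathcal{O}$. You correctly identify this as the principal obstacle, but the sketched fixes do not close it. Your $\ee$-argument also needs repair: $\lvert\det_Q(\cdot)\rvert$ is meaningless for general $Q$, and even after replacing $\lvert\cdot\rvert$ by the homomorphism $\chi\colon Q^\times\to\Q_{>0}$, $m\mapsto\#(Z/mZ)$, the formula $[\mathcal{O}:\phi\mathcal{O}]=\chi(\det_Q\phi)$ requires $\mathcal{O}$ to be a $Z$-lattice, which is not assumed (the paper does not require $R$ to be finitely generated over $Z$).

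The paper's route to (c) is entirely different and handles $\ee$ and $\aa$ in one stroke. One observes, as you do, that $\ii\circ\ee$ and $\ii\circ\aa$ give homomorphisms $E^\times\to\Q_{>0}$ that vanish on $\Zz(E)^\times$ (for central $\gamma$ your two projections $p_0,p_1$ coincide) and on $[E^\times,E^\times]$ (the target is abelian). Since $\Q_{>0}$ is torsion-free, it then suffices to show that $E^\times/\bigl(\Zz(E)^\times\cdot[E^\times,E^\times]\bigr)$ has finite exponent whenever $E$ is semisimple and finite over its centre. The paper proves this by reducing to division rings and invoking Wedderburn's factorisation theorem: if $D$ has degree $m^2$ over $k=\Zz(D)$ and $a\in D^\times$ has minimal polynomial $f\in k[X]$ of degree $l\mid m$, then $f=\prod_{i=1}^l(X-b_iab_i^{-1})$ in $D[X]$ for suitable $b_i\in D^\times$; comparing constant terms gives $a^l\in k^\times\cdot[D^\times,D^\times]$, hence $a^m\in k^\times\cdot[D^\times,D^\times]$. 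This finite-exponent theorem is the missing engine.
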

\noindent
The proof of Theorem \ref{thm:mainintro} is given in Section \ref{sec:proofs}.
The essential statement is part (c).

The theorem shows that one can define $\ia(L,M)=\ii(\aa(c))$, independently
of $c$, if one has $Q\otimes_ZL\cong_AQ\otimes_ZM$ and $c\colon
L\rightleftharpoons M$ is an $R$-module commensurability. One
deduces the existence part of Theorem~\ref{thm:QGintro}
from Theorem~\ref{thm:mainintro} by putting
$Z=\Z$, $Q=\Q$, $A=\Q[G]$, and $R=\Z[G]$. Other cases that may arise in
applications include localisations and completions of $\Z$ in the r\^ole of
$Z$, and quotients of $Z[G]$ in the r\^ole of~$R$.

Isogenies, commensurabilities, and their indices have many formal properties,
and it is to these that Section \ref{sec:2} is devoted. Among other things,
we define a notion of \emph{equivalence} of correspondences and, under certain
conditions, the \emph{composition} $d\circ c$ of two correspondences $d$ and $c$.
The index of a commensurability depends only on its equivalence class, and it is
multiplicative in composition of commensurabilities. We introduce, for each
object $L$ in the category under discussion, a group $\cG_L$ of which the
elements are the equivalence classes of commensurabilities $L\rightleftharpoons L$.
The group $\cG_L$ plays an important r\^ole in the paper. It may
be thought of as the automorphism group of $L$ in a ``category of fractions'' \cite{GZ},
which is obtained by formally inverting all isogenies in our category.
We also recall in Section \ref{sec:2} an explicit construction of that category
of fractions: the morphisms are equivalence classes of \emph{skew
correspondences}, which are correspondences $(W,f,g)$ in which $f$ is an isogeny.

Section \ref{sec:ringisog}, on ring isogenies, culminates in the
following result, which is proved as 
Theorem \ref{thm:O}. We shall use it to pass from endomorphism rings of
module commensurabilities to automorphism groups.
\begin{theorem}\label{thm:unitsintro}
Let $E\to F$ be a ring isogeny. Then the induced group
homomorphism $E^\times\to F^\times$ is a group isogeny. If in addition
the map $E\to F$ is surjective, then so is the induced map
$E^\times\to F^\times$.
\end{theorem}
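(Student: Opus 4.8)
The plan is to split a ring isogeny $\phi\colon E\to F$ into a surjection $E\to E/\ker\phi$ with finite kernel, followed by the inclusion $E/\ker\phi\hookrightarrow F$ of a subring of finite additive index; both are again ring isogenies, and since a composite of group isogenies is a group isogeny with the indices multiplying (part of the formal calculus of Section~\ref{sec:2}), it suffices to treat these two cases. The kernel is handled uniformly: $\ker(\phi^\times)\subseteq 1+\ker\phi$, which is finite, and trivial when $\phi$ is injective.

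For the surjective case, write $I=\ker\phi$, a finite two-sided ideal of $E$; I want $\phi^\times\colon E^\times\to F^\times$ to be onto. The descending chain $I\supseteq I^2\supseteq\cdots$ of finite ideals stabilises at some idempotent ideal $N=I^m=I^{m+1}=\cdots$, so $N^2=N$. Being finite, $N$ is finitely generated both as a left and as a right $E$-module, and a finitely generated idempotent two-sided ideal is generated by a central idempotent $e$ (a ring-theoretic lemma to be placed among the preliminaries of Section~\ref{sec:ringisog}); hence $E\cong Ee\times E(1-e)$ and $E/N\cong E(1-e)$, so $E^\times\to(E/N)^\times$ is a surjection. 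The residual surjection $E/N\to E/I=F$ has kernel $I/N$, which is nilpotent since $(I/N)^m=I^m/N=0$, and units lift along a surjection with nilpotent kernel by the usual geometric-series trick. Composing gives the surjectivity of $\phi^\times$, which in particular settles the final assertion of the theorem.

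The substantive case is the inclusion $E\subseteq F$ of unital rings with $m:=(F\colon E)<\infty$, where I must show $(F^\times\colon E^\times)<\infty$. Let $\mathfrak{c}=\{x\in F:FxF\subseteq E\}$ be the largest two-sided ideal of $F$ contained in $E$. The crux is that $\mathfrak{c}$ has finite index in $F$. To see this, choose $t_0=1,t_1,\dots,t_r\in F$ whose images generate the finite abelian group $F/E$, so that $F=\sum_i Et_i$ as a left $E$-module and $F=\sum_k t_kE$ as a right $E$-module; then $FxF=\sum_{i,k}E\,t_ixt_k\,E$, so (using $1\in E$) the condition $x\in\mathfrak{c}$ is equivalent to the finitely many conditions $t_ixt_k\in E$, each cutting out a subgroup of $F$ of index at most $m$. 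Hence $(F\colon\mathfrak{c})\le m^{(r+1)^2}<\infty$. Now $\mathfrak{c}$ is a two-sided ideal of both $E$ and $F$, and the kernels of $F^\times\to(F/\mathfrak{c})^\times$ and of $E^\times\to(E/\mathfrak{c})^\times$ coincide: if $u\in F^\times$ satisfies $u\equiv 1\pmod{\mathfrak{c}}$ then $u^{-1}\equiv 1\pmod{\mathfrak{c}}$ as well, whence $u,u^{-1}\in E$. Calling this common kernel $K$ (a normal subgroup of $F^\times$ contained in $E^\times$), the group $F^\times/K$ embeds into the finite group $(F/\mathfrak{c})^\times$, so it is finite, and therefore $(F^\times\colon E^\times)=(F^\times/K\colon E^\times/K)$ is finite; together with the triviality of the kernel this makes $E^\times\to F^\times$ a group isogeny.

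The main obstacle is the finiteness of the conductor index $(F\colon\mathfrak{c})$: reducing modulo $mF$ does not help, since $F/mF$ may well be infinite, and it is precisely the finite generation of $F$ as a one-sided $E$-module — which collapses the two-sided condition defining $\mathfrak{c}$ into finitely many congruences — that rescues the argument. A secondary technical point requiring care is the lemma on finite idempotent two-sided ideals invoked in the surjective case.
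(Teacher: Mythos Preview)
Your treatment of the inclusion $E\subseteq F$ is correct, and in fact a little more direct than the paper's: both arguments use the conductor $\mathfrak{c}=\{x\in F:FxF\subseteq E\}$ and the observation that $\ker(F^\times\to(F/\mathfrak{c})^\times)\subseteq E^\times$, but you bound $(F:\mathfrak{c})$ by writing $F=\sum_i Et_i=\sum_k t_kE$ and intersecting the finitely many congruence conditions $t_ixt_k\in E$, whereas the paper interposes the ideal $J=\{x:Fx\subseteq E,\ xF\subseteq E\}$ and a map $J\to\Hom(D\otimes_\Z D,D)$ with $D=F/E$.

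The surjective case, however, contains a genuine gap. The lemma you invoke --- that a finitely generated idempotent two-sided ideal is generated by a \emph{central} idempotent --- is false in noncommutative rings, even when the ideal is finite. Take $E=\begin{psmallmatrix}k&k\\0&k\end{psmallmatrix}$ over a finite field $k$ and $I=\begin{psmallmatrix}k&k\\0&0\end{psmallmatrix}$: this is a finite two-sided ideal with $I^2=I$, yet the centre of $E$ is $k\cdot 1$, so the only central idempotents are $0$ and $1$, neither of which generates $I$. (Viewed as a non-unital ring, $I$ has $e_{11}$ as a left identity but no right identity, so the hoped-for ``the identity of $N$ is automatically central in $E$'' argument cannot get started.) Here $N=I$ already, so your chain stabilises immediately and there is no nilpotent quotient to hide behind.

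The paper circumvents this by a different mechanism: it lets $E$ act on the finite ideal $I$ by left multiplication, obtaining a ring homomorphism $E\to\End I$ with finite image $R\cong E/J$; then $(I\cap J)^2\subseteq JI=0$ forces $I\cap J\subseteq\J(E)$, and a fibre-product identification $E/(I\cap J)\cong(E/I)\times_{E/(I+J)}R$ together with the surjectivity of units for left-artinian rings finishes the job. Some passage through a genuinely finite (hence artinian) quotient of $E$ appears to be unavoidable; your decomposition into ``central-idempotent part'' and ``nilpotent part'' does not exist in general.
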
\noindent
In Section \ref{sec:semisimple} we prove a property
of the rings $R$ appearing in Theorem \ref{thm:mainintro} that allows
us to apply the results of Section \ref{sec:2} to the category of finitely
generated $R$-modules.
\begin{theorem}\label{thm:noethintro}
Let $R$ be a ring as in the statement of Theorem~\ref{thm:mainintro}.
Then $R$ is left-noetherian and right-noetherian.
\end{theorem}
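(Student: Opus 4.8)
The plan is to reduce the noetherian property of $R$ to that of a larger, more tractable ring. Recall the hypotheses: $Z$ is an infinite domain with all proper nonzero quotients $Z/mZ$ finite, $Q = \operatorname{Frac}(Z)$, $A$ is a semisimple $Q$-algebra that is finite-dimensional over $Q$, and $R \subset A$ is a sub-$Z$-algebra with $Q \cdot R = A$. The key observation is that, since $R$ generates $A$ over $Q$ and $A$ is finite-dimensional, one can choose finitely many elements $r_1, \dots, r_n \in R$ that form a $Q$-basis of $A$; then the $Z$-submodule $R_0 = Z r_1 + \dots + Z r_n \subseteq R$ is a finitely generated $Z$-module with $Q \cdot R_0 = A$. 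Since $Z$ is noetherian (it is a domain all of whose proper quotients are finite, hence in particular each nonzero ideal has finite index, so ascending chains of ideals stabilise — alternatively this is part of the standing setup), $R_0$ is a noetherian $Z$-module, and every $Z$-submodule of $A$ that is finitely generated is contained in some $Z$-lattice, i.e.\ a finitely generated $Z$-submodule spanning $A$ over $Q$.

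First I would show that $R$ itself is a finitely generated $Z$-module, equivalently that $R$ is a $Z$-lattice in $A$. The point is that $R$ is a ring, so it is closed under multiplication; I would produce a $Z$-lattice $\Lambda \subseteq A$ containing $R_0$ that is also closed under multiplication (an order-like object), for instance by taking $\Lambda$ to be a suitable "multiplier" lattice such as $\{x \in A : x R_0 \subseteq R_0\} \cap R_0 \cdot (\text{something})$ — more robustly, one knows that in a finite-dimensional algebra over the field of fractions of a noetherian domain, any subring that is finitely generated as a module over the base extends to a maximal order or is at least contained in one when a lattice exists, but here the cleanest route is: $R$ acts faithfully $Z$-linearly on the finitely generated $Z$-module $R_0$ via left multiplication only if $R_0$ is an $R$-module, which it need not be. So instead I would enlarge: let $\Lambda = \sum_{i} R_0 \cdot R_0^{(i)}$ built up to stabilise, using noetherianity of $A$ as a $Z$-module — concretely, the ascending chain $R_0 \subseteq R_0 + R_0^2 \subseteq R_0 + R_0^2 + R_0^3 \subseteq \cdots$ of finitely generated $Z$-submodules of the finitely generated $Z$-module $Q \cdot R_0 \cap (\text{lattice closure})$ must stabilise, yielding a $Z$-order $\Lambda \supseteq R_0$ with $Q\Lambda = A$. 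Then $R$, being a subring containing $R_0$, need not lie in $\Lambda$; to force containment I would invoke that $R$ is generated as a ring over $Z$ by elements each of which is integral over $Z$ (since $A$ is finite-dimensional semisimple, every element satisfies a monic polynomial over $Q$, and after clearing denominators one sees each element of a $Z$-spanning set, suitably scaled, is integral), so $R$ is contained in the integral closure of $Z$ in $A$, which is a finitely generated $Z$-module because $Z$ is a noetherian domain and $A/Q$ is finite and separable (semisimplicity over a perfect field, or more generally the standard finiteness of integral closure in this setting). Hence $R$ is a $Z$-lattice in $A$.

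Once $R$ is a finitely generated $Z$-module, noetherianity is immediate: a finitely generated left $R$-module $N$ is then a finitely generated $Z$-module (generators of $N$ over $R$ times generators of $R$ over $Z$ give $Z$-generators), hence a noetherian $Z$-module since $Z$ is noetherian, and since $R$-submodules of $N$ are in particular $Z$-submodules, the ascending chain condition for $R$-submodules follows from that for $Z$-submodules. Applying this with $N = R$ as a left module over itself gives left-noetherian; the identical argument with right modules gives right-noetherian.

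I expect the main obstacle to be the step showing $R$ is a $Z$-lattice, specifically justifying that the integral closure of $Z$ in the finite-dimensional semisimple $Q$-algebra $A$ is a finitely generated $Z$-module. In the commutative separable case this is the classical finiteness theorem (Krull--Akizuki style arguments, or the trace-form argument using nondegeneracy of the trace pairing, which holds because $A$ is semisimple hence its reduced trace form is nondegenerate). For the noncommutative $A$ one reduces to the simple factors and then to matrix algebras over division algebras, where the reduced trace again gives a nondegenerate $Q$-bilinear form; then $\{x : \operatorname{Trd}(xR_0) \subseteq Z\}$ is a $Z$-lattice containing the integral closure, and finite generation follows from noetherianity of $Z$. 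The separability needed for trace nondegeneracy is automatic here since semisimplicity of $A$ over $Q$ forces the reduced trace form to be nondegenerate regardless of the characteristic, so no perfectness hypothesis on $Q$ is actually required — this is the subtle point to get right, and I would lean on the structure theory of semisimple algebras to handle it uniformly.
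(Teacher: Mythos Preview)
Your approach has a genuine gap at its central step: you aim to show that $R$ is a finitely generated $Z$-module, but this is simply false under the stated hypotheses. The paper itself stresses, immediately after stating Theorem~\ref{thm:noethintro}, that ``the point of Theorem~\ref{thm:noethintro} is that $R$ is not required to be finitely generated as a $Z$-module.'' A concrete counterexample: take $Z=\Z$, $Q=A=\Q$, and $R=\Z[1/2]$. Then $R$ is a sub-$Z$-algebra of $A$ with $Q\cdot R=A$, yet $R$ is not finitely generated over $\Z$, and $1/2\in R$ is not integral over $\Z$. So neither the ``$R$ is a $Z$-lattice'' step nor the integrality/integral-closure argument can go through. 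More generally, any localisation of the ring of integers of a number field is a legitimate choice of $R$, and these are almost never $\Z$-lattices.

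The paper's proof (Theorem~\ref{thm:Q}) bypasses this entirely and works directly with an arbitrary left ideal $I\subset R$. Semisimplicity of $A$ is used only to produce an idempotent $e\in A$ with $Ae=Q\cdot I$; after clearing a denominator one has $me\in I$ and hence $Rme\subset I\subset Re$. The crucial input is then Lemma~\ref{lem:P}, which bounds $\#(L/mL)$ for \emph{any} $Z$-submodule $L$ of a finite-dimensional $Q$-vector space, with no finite-generation hypothesis on $L$. Applied with $L=Re$, this makes $I/Rme$ finite, so $I$ is generated by $me$ together with finitely many coset representatives. The argument never asserts or needs that $R$ itself is a lattice.

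As a secondary remark, your claim that semisimplicity of $A$ forces the reduced trace form to be nondegenerate ``regardless of characteristic'' is also not correct: a purely inseparable field extension of $Q$ (possible when $Z=\mathbb{F}_p[t]$, say) is a field and hence semisimple, but has identically zero trace. This would be a further obstacle to the trace-pairing approach, though it is moot given that the lattice strategy fails already in the basic example above.
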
\noindent
For a proof, see Theorem \ref{thm:Q}.
The point of Theorem~\ref{thm:noethintro} is that $R$ is not
required to be finitely generated as a $Z$-module. As an aside,
we characterise, in Theorem \ref{thm:R}, the rings $Z$ satisfying
the hypotheses of Theorem~\ref{thm:mainintro}.

Section \ref{sec:unitssemisimple} furnishes the \emph{deus ex machina} of
the paper.
\begin{theorem}\label{thm:finexpintro}
Let $B$ be a semisimple ring that is finitely generated as a module
over its centre $\Zz(B)$. Then $B^\times/(\Zz(B)^\times[B^\times,B^\times])$ is
an abelian group of finite exponent.
\end{theorem}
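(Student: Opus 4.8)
The plan is to reduce to the case where $B$ is simple, i.e.\ a matrix algebra over a division ring, and then analyse the abelianised unit group directly. Since $B$ is semisimple, it is a finite product $B=\prod_{i}B_i$ of simple rings, and one checks that $\Zz(B)=\prod_i\Zz(B_i)$, that $B^\times=\prod_i B_i^\times$, that $[B^\times,B^\times]=\prod_i[B_i^\times,B_i^\times]$, and that the finite-generation hypothesis over the centre is inherited by each factor. Since a finite product of abelian groups of finite exponent again has finite exponent, it therefore suffices to treat a simple ring $B$. Writing $B=M_n(D)$ for a division ring $D$ that is finitely generated over its centre $K=\Zz(D)$, we have $\Zz(B)\cong K$ (embedded as scalar matrices) and $B^\times=\GL_n(D)$.

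The key input is the structure of $\GL_n(D)^{\mathrm{ab}}$. For $n\geq 2$ (or $n=1$ together with a mild hypothesis on $D$), the Dieudonné determinant gives an isomorphism $\GL_n(D)/[\GL_n(D),\GL_n(D)]\xrightarrow{\ \sim\ }D^\times/[D^\times,D^\times]$, and $[\GL_n(D),\GL_n(D)]=\SL_n(D)$ is generated by elementary matrices. Under this identification, the image of the scalar matrices $\Zz(B)^\times=K^\times$ in $D^\times/[D^\times,D^\times]$ is (up to the determinant formula, which raises a scalar to the $n$-th power) the image of $K^\times$, which certainly contains $(K^\times)^n$. Hence $B^\times/(\Zz(B)^\times[B^\times,B^\times])$ is a quotient of $D^\times/(K^\times[D^\times,D^\times])$, and the problem is reduced to the division-ring case $n=1$: it suffices to show that $D^\times/(K^\times[D^\times,D^\times])$ has finite exponent whenever $D$ is a division ring finite over its centre~$K$.

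For the division-ring case, the essential tool is the \emph{reduced norm} $\mathrm{Nrd}\colon D^\times\to K^\times$ and the fact that $[D:K]=d^2$ for some integer $d$ (the Schur index), so that $\mathrm{Nrd}$ restricted to $K^\times$ is the $d$-th power map, i.e.\ $\mathrm{Nrd}(a)=a^d$ for $a\in K^\times$. The commutator subgroup $[D^\times,D^\times]$ lies in the kernel $\SL_1(D)$ of the reduced norm, so $\mathrm{Nrd}$ factors through $D^\times/[D^\times,D^\times]$; I would show that for any $x\in D^\times$ the element $x^{d}$ lies in $K^\times[D^\times,D^\times]$. The cleanest route is: given $x\in D^\times$, the subfield $K(x)\subseteq D$ is commutative of degree $e\mid d$ over $K$, and the image of $x$ under $\mathrm{Nrd}$ equals $N_{K(x)/K}(x)^{d/e}$; combining this with the observation that $\mathrm{Nrd}(x)\in K^\times$ and that $K^\times\cdot[D^\times,D^\times]$ already contains all elements of $D^\times$ whose reduced norm is, up to a $d$-th power in $K^\times$, a norm from $K(x)$, one concludes that $x^{d!}$ (or some explicit power depending only on $d$) lies in $K^\times[D^\times,D^\times]$. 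This shows $D^\times/(K^\times[D^\times,D^\times])$ has exponent dividing $d!$, and pulling this back through the Dieudonné determinant gives the theorem with an explicit exponent bound depending only on the dimensions of the simple factors of $B$ over their centres.

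The main obstacle will be the last step: making precise, uniformly in $x$, how large a power of an arbitrary unit $x\in D^\times$ can be forced into $K^\times[D^\times,D^\times]$. One must be careful that the reduced norm alone only controls $D^\times/\SL_1(D)$, whereas $\SL_1(D)/[D^\times,D^\times]$ (the reduced Whitehead group $\mathrm{SK}_1(D)$) can be nontrivial; however, for the \emph{exponent} statement what is needed is only that the composite $D^\times\to D^\times/(K^\times[D^\times,D^\times])$ kills a fixed power of every element, and since every $x$ lies in a commutative subfield $K(x)/K$ of degree dividing $d$, the relation $\mathrm{Nrd}(x)=N_{K(x)/K}(x)^{d/e}$ together with $\mathrm{Nrd}(x)\equiv x^{?}$ modulo $K^\times[D^\times,D^\times]$ can be leveraged to bound the exponent, bypassing $\mathrm{SK}_1$ entirely. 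I expect the cleanest formulation uses the fact that the reduced norm of $x$ lies in $K^\times$, so $\mathrm{Nrd}(x)=\mathrm{Nrd}(\mathrm{Nrd}(x)^{1/d})^{d}$ formally forces $x^{d}\mathrm{Nrd}(x)^{-1}\in\SL_1(D)$ and then a further bounded power lands in $[D^\times,D^\times]$ because $\mathrm{SK}_1$ of a division algebra finite over its centre, while possibly infinite, is generated by elements each of bounded order once one restricts to what is visible through commutators with $K(x)$ — this is where I would invoke, if available in the paper's toolkit, the finiteness of the relevant cohomological invariants, or else argue by hand that $x^{d}$ modulo $[D^\times,D^\times]$ differs from an element of $K^\times$ by something in $[D^\times,D^\times]$ using the conjugation action of $D^\times$ on $K(x)^\times$ (Skolem--Noether), which exhibits $\sigma(y)y^{-1}\in[D^\times,D^\times]$ for every $y\in K(x)^\times$ and every $\sigma\in\mathrm{Gal}(K(x)/K)$ realised by conjugation, so that $N_{K(x)/K}(y)\equiv y^{e}\pmod{[D^\times,D^\times]}$; applying this with $y=x$ gives $x^{e}\equiv N_{K(x)/K}(x)\in K^\times\pmod{[D^\times,D^\times]}$, hence $x^{e}\in K^\times[D^\times,D^\times]$ and the exponent divides $\mathrm{lcm}$ of the $e$'s, which divides $d!$. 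That final Skolem--Noether argument is, I believe, the crux, and the rest is bookkeeping.
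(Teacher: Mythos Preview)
Your overall architecture matches the paper's: reduce to simple $B$, write $B=\M(n,D)$, reduce to the division-ring case via the structure of $\GL_n(D)^{\ab}$ (the paper does this elementarily in Lemmas~\ref{lem:S} and~\ref{lem:T} rather than invoking the Dieudonn\'e determinant, but the content is the same), and then show that $D^\times/(K^\times[D^\times,D^\times])$ has exponent dividing $m=[D:K]^{1/2}$.

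The gap is in your final Skolem--Noether step. You argue that for each $\sigma\in\mathrm{Gal}(K(x)/K)$ realised by conjugation one has $\sigma(x)x^{-1}\in[D^\times,D^\times]$, and hence $N_{K(x)/K}(x)\equiv x^{e}\pmod{[D^\times,D^\times]}$. But this identification of the norm with the product over $\mathrm{Gal}(K(x)/K)$ is only valid when $K(x)/K$ is \emph{Galois}. For $m\ge 3$ a subfield $K(x)$ of $D$ need not be normal over $K$ (and in positive characteristic need not even be separable); in that case $\lvert\Aut_K(K(x))\rvert<e$, your product does not compute the norm, and the conclusion $x^{e}\in K^\times[D^\times,D^\times]$ does not follow. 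Passing to the Galois closure does not help either, since its degree over $K$ need not divide $m$, so it need not embed in $D$.

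The paper closes exactly this gap with Wedderburn's factorisation theorem (Theorem~\ref{thm:U}): the minimal polynomial $f\in K[X]$ of $a\in D^\times$, of degree $l$, factors in $D[X]$ as $\prod_{i=1}^{l}(X-b_iab_i^{-1})$ with $b_i\in D^\times$. Reading off the constant term gives $\prod_i b_iab_i^{-1}=(-1)^l f(0)\in K^\times$, and since each $b_iab_i^{-1}\equiv a$ modulo $[D^\times,D^\times]$ one obtains $a^{l}\in K^\times[D^\times,D^\times]$ with $l\mid m$. This simultaneously bypasses the Galois obstruction and the $\mathrm{SK}_1$ issue you flagged, and yields the sharp exponent bound $m$ (hence $d=nm$ for $\M(n,D)$) rather than your $d!$.
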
\noindent
This is proved as Theorem \ref{thm:X}.
In fact, we prove an explicit version of Theorem \ref{thm:finexpintro}.
A {\it central simple algebra\/} over a field $k$ is a ring $B$
that is simple in the sense that it has precisely two two-sided
ideals; that has centre equal to $k$; and that has finite dimension
as a vector space over~$k$; it is a well-known result \cite[(7.22)]{CR1} that,
under these conditions, that dimension is a square.
\begin{theorem}\label{thm:expintro}
Let $k$ be a field, and let $B$ be a central simple algebra over~$k$.
Let the dimension of $B$ as a vector space over $k$ be $d^2$, where
$d$ is a positive integer. Then the group $B^\times/(k^\times[B^\times,B^\times])$ is
abelian of exponent dividing~$d$.
\end{theorem}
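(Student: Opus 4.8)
The quotient $B^\times/(k^\times[B^\times,B^\times])$ is automatically abelian, being a quotient of $B^\times/[B^\times,B^\times]$, so the substance is the bound on the exponent, i.e.\ the assertion that $x^d\in k^\times[B^\times,B^\times]$ for every $x\in B^\times$. The plan is to establish the sharper congruence $x^d\equiv\operatorname{Nrd}(x)\pmod{[B^\times,B^\times]}$, where $\operatorname{Nrd}\colon B\to k$ is the reduced norm. I will use three standard properties of $\operatorname{Nrd}$: it is multiplicative, hence restricts to a homomorphism $B^\times\to k^\times$; one has $\operatorname{Nrd}(c\cdot 1)=c^d$ for $c\in k$; and $\operatorname{Nrd}$ is trivial on $[B^\times,B^\times]$, because after extending scalars to a splitting field $\ell$ it becomes the determinant $\operatorname{GL}_d(\ell)\to\ell^\times$, and determinants of commutators are trivial. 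Since $\operatorname{Nrd}(x)\in k^\times$, the congruence above gives $x^d\in k^\times[B^\times,B^\times]$, as wanted.

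First I would dispose of the case in which $K:=k[x]$ is a subfield of $B$ that is Galois over $k$. For $\sigma\in\operatorname{Gal}(K/k)$, the map $\sigma$ is a $k$-algebra automorphism of the simple subalgebra $K$ of $B$, so by the Skolem--Noether theorem there is $g_\sigma\in B^\times$ with $\sigma(y)=g_\sigma y g_\sigma^{-1}$ for all $y\in K$; in particular $\sigma(x)=g_\sigma x g_\sigma^{-1}\equiv x\pmod{[B^\times,B^\times]}$. Taking the product over $\sigma\in\operatorname{Gal}(K/k)$ yields $N_{K/k}(x)\equiv x^{[K:k]}\pmod{[B^\times,B^\times]}$. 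To pass from the exponent $[K:k]$ to $d$ I would invoke the identity $\operatorname{Nrd}(x)=N_{K/k}(x)^{d/[K:k]}$, which follows from the double centralizer theorem: $C_B(K)$ is a central simple $K$-algebra of degree $d/[K:k]$ containing $x$ in its centre, so $\operatorname{Nrd}_{C_B(K)}(x)=x^{d/[K:k]}$, and $\operatorname{Nrd}_B=N_{K/k}\circ\operatorname{Nrd}_{C_B(K)}$ on $C_B(K)$ by transitivity of reduced norms. Raising the previous congruence to the power $d/[K:k]$ then gives $x^d\equiv\operatorname{Nrd}(x)\pmod{[B^\times,B^\times]}$.

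It then remains to reduce an arbitrary $x\in B^\times$ to the previous case. If $k$ has characteristic $p$ and $k[x]$ is not separable over $k$, one replaces $x$ by $x^{p^r}$ with $p^r$ the inseparable degree of $k[x]/k$; then $k[x^{p^r}]$ is separable over $k$, and since $p^r\mid d$ a short computation with degrees shows that it suffices to treat $x^{p^r}$. If $k[x]$ is not a field, one uses the primitive idempotents $e_1,\dots,e_t$ of the commutative ring $k[x]$, sets $x_i=e_ixe_i\in(e_iBe_i)^\times$, and argues inside the semisimple corner rings $e_iBe_i$ (alternatively, one can reduce at the outset to the case that $B$ is a division algebra $D$, since for $B\cong\operatorname{M}_n(D)$ the Dieudonn\'e determinant identifies $B^\times/[B^\times,B^\times]$ with $D^\times/[D^\times,D^\times]$ compatibly with the inclusion of $k^\times$). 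The hard part, which I expect to be the main obstacle, is when $K=k[x]$ is a separable field extension of $k$ that is \emph{not} Galois: the conjugates of $x$ over $k$ then need not belong to $B$, so Skolem--Noether no longer produces them. The reformulation I would aim for is that it suffices to prove $\ker\bigl(N_{K/k}\colon K^\times\to k^\times\bigr)\subseteq[B^\times,B^\times]$, which applies to the element $x^{[K:k]}N_{K/k}(x)^{-1}$ of that kernel and, raised to the $(d/[K:k])$-th power, again yields the desired congruence. For $K/k$ cyclic this inclusion is immediate, since Hilbert's Theorem~90 writes a norm-one element as $z\,\sigma(z)^{-1}=[z,g_\sigma]$ with $\sigma$ a generator realised by conjugation; handling the general separable extension requires more work, and doing so cleanly is the crux of the argument.
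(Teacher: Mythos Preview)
Your proposal is incomplete, and you have correctly located the gap: the case in which $K=k[x]$ is a separable but non-Galois extension of $k$. Your reformulation, that it suffices to show $\ker\bigl(N_{K/k}\colon K^\times\to k^\times\bigr)\subseteq[B^\times,B^\times]$, is correct, but the argument you offer stops at cyclic $K/k$. Skolem--Noether only realises $k$-automorphisms of $K$ by conjugation in $B^\times$; for non-normal $K/k$ most of the $k$-conjugates of $x$ do not lie in $K$ at all, and there is no evident Galois-theoretic substitute. So as it stands the proposal does not prove the theorem.

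The paper fills exactly this gap, and with a single stroke. After reducing to the division-ring case $B=D$ (via an elementary argument about $\M(n,D)^\times$, essentially the Dieudonn\'e determinant you mention), it applies \emph{Wedderburn's factorisation theorem}: if $a\in D^\times$ has monic minimal polynomial $f\in k[X]$ of degree $\ell$, then there exist $b_1,\dots,b_\ell\in D^\times$ with
\[
f=(X-b_1ab_1^{-1})\cdots(X-b_\ell ab_\ell^{-1})\quad\text{in }D[X].
\]
Comparing constant terms yields $\prod_i b_iab_i^{-1}=(-1)^\ell f(0)\in k^\times$, hence $a^\ell\in k^\times[D^\times,D^\times]$; since $\ell\mid m$ where $m^2=\dim_k D$, one gets $a^m\in k^\times[D^\times,D^\times]$. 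Note that $(-1)^\ell f(0)=N_{K/k}(a)$, so this is precisely your desired inclusion, but obtained without any separability or normality hypothesis and without invoking reduced norms. The point is that the factorisation lives in $D[X]$, not in a splitting field of $f$ over $k$: the ``roots'' are $D^\times$-conjugates of $a$, not Galois conjugates, and your obstruction simply does not arise. This is the missing idea in your approach.
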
\noindent
Our proof of Theorem~\ref{thm:expintro} (see Theorem \ref{thm:W}) makes use
of Wedderburn's factorisation theorem for polynomials over division rings.
Theorem~\ref{thm:finexpintro} is an immediate consequence of Theorem~\ref{thm:expintro}.

In Section \ref{sec:skewcorres} we place ourselves in the situation of
Theorem \ref{thm:mainintro}, but replacing the semisimplicity assumption
on $A$ by the condition that $R$ be left-noetherian; by Theorem \ref{thm:noethintro}
this is a weaker condition. We apply the construction of Section \ref{sec:2}
to the category of finitely generated $R$-modules, and obtain a ``category
of fractions'' with the same objects, but with morphisms given by equivalence
classes of skew correspondences. Elaborating upon a well-known argument that
is ascribed to Serre, we prove that there is an equivalence of the
latter category with the category of finitely generated $A$-modules that
sends an $R$-module $L$ to the $A$-module $\cQ\otimes_{\cZ} L$. This has
two important consequences.
The first is part (a) of Theorem \ref{thm:mainintro}, which is contained in
Theorem \ref{thm:parta}. The second is that, for a finitely generated
$R$-module $L$, the group $\cG_L$ introduced in Section \ref{sec:2}
may be identified with the group $\Aut_A(\cQ\otimes_{\cZ} L)$.

Section \ref{sec:autcomm} uses the same hypotheses on $A$ and $R$ as Section
\ref{sec:skewcorres}. It starts off with the proof that, for any commensurability
$c\colon L\rightleftharpoons M$ of finitely generated $R$-modules, the
correspondence $\ee(c)\colon \End L\rightleftharpoons \End M$ is a ring
commensurability; by Theorem \ref{thm:unitsintro}, one then also obtains a group
commensurability $\aa(c)\colon\Aut L\rightleftharpoons \Aut M$. This
proves part (b) of Theorem \ref{thm:mainintro}. Next, we prove in Theorem
\ref{thm:compos} that,
for commensurabilities $c\colon L\rightleftharpoons M$ and
$d\colon M\rightleftharpoons N$ of finitely generated $R$-modules, one has
\begin{eqnarray*}
\ii(\ee(d\circ c))=\ii(\ee(d))\ii(\ee(c)),\quad\quad
\ii(\aa(d\circ c))=\ii(\aa(d))\ii(\aa(c)).
\end{eqnarray*}
This result at once allows us to reduce the proof of Theorem \ref{thm:mainintro}(c)
to the special case that $L=M$,
and shows that $\ii\circ\ee$ and $\ii\circ\aa$ give rise to group homomorphisms
$\cG_L\rar \Q_{>0}$; the statement of Theorem \ref{thm:mainintro}(c) is equivalent
to these homomorphisms being trivial. If we write $B=\End_A(\cQ\otimes_{\cZ}L)$,
then Section \ref{sec:skewcorres} enables us to identify $\cG_L$ with
$B^\times=\Aut_A(\cQ\otimes_{\cZ}L)$ and to prove that the homomorphisms are trivial
on the subgroup $\Zz(B)^\times$ of $B^\times$.

In Section \ref{sec:proofs}, the assumption that $A$ be semisimple is brought
back in. It implies that the ring $B$ just defined is also semisimple. Since
the group homomorphisms $\ii\circ\ee$ and $\ii\circ\aa$ are not only trivial on 
$\Zz(B)^\times\subset B^\times$, but also on the commutator subgroup $[B^\times,B^\times]$,
Theorem \ref{thm:mainintro}(c) becomes an immediate consequence of
Theorem \ref{thm:finexpintro}. We give an example to show that, unlike parts
(a) and (b), part (c) of Theorem \ref{thm:mainintro} may fail if $R$ is
left-noetherian, but $A$ is not semisimple.
In the same section, we prove Theorem \ref{thm:QGintro} by putting $R=\Z[G]$;
as far as we are aware, this special case of Theorem \ref{thm:mainintro} is
essentially as hard as the general case.

\begin{acknowledgements}
We would like to thank the referees for helpful comments.
\end{acknowledgements}

\section{Isogenies and commensurabilities}\label{sec:2}\noindent
This section is devoted to formal properties of isogenies
and commensurabilities, and of their indices.

We begin by recalling a basic notion from category theory, for which
we refer to \cite[Ch. I, \S 11]{Lang}.
Let $L\stackrel{f}{\rightarrow}M\stackrel{g}{\leftarrow}N$
be a diagram in a category $\cC$. We say that $(L\times_MN,p_0,p_1)$ is
a \emph{fibre product} of $L$ and $N$ over $M$ if $L\stackrel{p_0}{\leftarrow}L\times_MN
\stackrel{p_1}{\rar}N$ is a diagram in $\cC$ with the property that
$fp_0=gp_1$, and with the universal property that for any diagram
$L\stackrel{h}{\leftarrow}X \stackrel{j}{\rar}N$ that satisfies $fh=gj$,
there exists a unique morphism $i\colon X\rar L\times_MN$ such that
$h=p_0i$ and $j=p_1i$. When a fibre product exists, it is unique up to a
unique isomorphism, so in that case we may speak of \emph{the} fibre product
of $L$ and $N$ over $M$.
In the category $\Grp$ of groups,
the fibre product of $L\stackrel{f}{\rightarrow}M\stackrel{g}{\leftarrow}N$
exists, and it is given by
$$
L\times_M N=\{(l,n)\in L\times N:f(l) = g(n)\},
$$
with $p_0$ and $p_1$ being the projection maps to $L$ and $N$, respectively.

Throughout this section
$\cC$ will denote a category in which for every diagram
$L\stackrel{f}{\rightarrow}M\stackrel{g}{\leftarrow}N$ the fibre product
of $L$ and $N$ over $M$ exists, equipped
with a functor $\cC\rar \Grp$ that
preserves fibre products. The main examples we have in mind are the category of
groups with the identity functor,
the category of rings with the functor that sends a ring to its underlying
additive group, and the category of finitely generated left $R$-modules for a
left-noetherian ring $R$,
with the functor that sends an $R$-module to its underlying abelian group. 

An \emph{isogeny} in $\cC$ is a morphism that becomes an isogeny
in $\Grp$. A \emph{commensurability} in $\cC$ is a correspondence
in $\cC$ that becomes a commensurability in $\Grp$.
We will often think of an isogeny $f\colon L\rar M$ as a special case of a
commensurability, which we will denote by $c_f$,
namely $c_f=(L,\id,f)\colon L\rightleftharpoons M$.

The \emph{index}
$\ii(f)$ of an isogeny $f$ in $\cC$ is defined to be the index
of the image of $f$ in $\Grp$, and the index of a commensurability is
defined analogously, as in the introduction.

For each of the results \ref{prop:isog} -- \ref{prop:composition}
below, it will be clear that it holds for $\cC$
if it holds for $\Grp$. We will
therefore tacitly assume that $\cC=\Grp$ in the proofs of those results.

\begin{proposition}\label{prop:isog}
Let $L$, $M$, $N$ be objects in $\cC$ and let
$h$ be the composition of two morphisms $L\stackrel{f}{\rar}M\stackrel{g}{\rar}N$.
If two of $f$, $g$, $h$ are isogenies, then so is the third.
Moreover, we then have $\ii(h)=\ii(g)\ii(f)$.
\end{proposition}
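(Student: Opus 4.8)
As permitted by the remark preceding the statement, I take $\cC=\Grp$, so that $f\colon L\to M$, $g\colon M\to N$ and $h=gf\colon L\to N$ are homomorphisms of (possibly non-abelian) groups. Accordingly I will treat all subgroup indices as cardinals and use only: the multiplicativity law $(G:K)=(G:H)(H:K)$ for $K\le H\le G$; the inequality $(A:A\cap B)\le(G:B)$ for subgroups $A,B\le G$; and the fact that the surjection $g\colon M\to gM$ induces, for every subgroup $H$ of $M$ containing $\ker g$, a bijection between the left cosets of $H$ in $M$ and those of $g(H)$ in $gM$, so that in particular $(M:H)=(gM:g(H))$. Two trivial observations do part of the work: since $\ker f\subseteq\ker h$, finiteness of $\ker h$ forces finiteness of $\ker f$; and since $hL\subseteq gM\subseteq N$, finiteness of $(N:hL)$ forces finiteness of both $(N:gM)$ and $(gM:hL)$.

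The core case is that $f$ and $g$ are isogenies; I claim $h$ is one and $\ii(h)=\ii(g)\ii(f)$. For the kernel, restricting $f$ to $\ker h=f^{-1}(\ker g)$ gives a surjection onto $fL\cap\ker g$ with kernel $\ker f$, so $\#\ker h=\#\ker f\cdot\#(fL\cap\ker g)$, which is finite as $fL\cap\ker g\le\ker g$. For the image, $hL=g(fL)=g(fL\cdot\ker g)$, so the coset bijection gives $(gM:hL)=(M:fL\cdot\ker g)$ and hence $(N:hL)=(N:gM)\cdot(M:fL\cdot\ker g)<\infty$. Now combine these with the identities $(M:fL)=(M:fL\cdot\ker g)\cdot(fL\cdot\ker g:fL)$, with $(fL\cdot\ker g:fL)=(\ker g:\ker g\cap fL)$ (valid since $\ker g$ is a normal subgroup of $M$), and with $\#\ker g=(\ker g:\ker g\cap fL)\cdot\#(fL\cap\ker g)$: multiplying numerator and denominator of $\ii(h)=(N:hL)/\#\ker h$ by $(\ker g:\ker g\cap fL)$ turns it into
$$\ii(h)=\frac{(N:gM)\cdot(M:fL)}{\#\ker g\cdot\#\ker f}=\ii(g)\cdot\ii(f).$$

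It remains to deduce the two remaining cases from the core case. Assume $h$ is an isogeny; then $\#\ker f<\infty$ and $(N:gM)<\infty$ by the trivial observations, so in each case only one finiteness statement is left. If moreover $f$ is an isogeny, then $fL\cap\ker g=f(\ker h)$ is finite and $(\ker g:\ker g\cap fL)\le(M:fL)<\infty$, whence $\ker g$ is finite and $g$ is an isogeny. If instead $g$ is an isogeny, put $P=g^{-1}(hL)$, a subgroup of $M$ containing both $\ker g$ and $fL$; one checks $P=fL\cdot\ker g$ and $g(P)=hL$ (using $hL\subseteq gM$), so that $(P:fL)=(\ker g:\ker g\cap fL)\le\#\ker g<\infty$ and $(M:P)=(gM:hL)\le(N:hL)<\infty$, whence $(M:fL)<\infty$ and $f$ is an isogeny. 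In either case all three maps are now isogenies, so the core case applies and yields $\ii(h)=\ii(g)\ii(f)$, equivalently $\ii(g)=\ii(h)/\ii(f)$ in the first subcase and $\ii(f)=\ii(h)/\ii(g)$ in the second.

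I do not expect a genuine obstacle here: the whole argument is bookkeeping with indices of subgroups that one cannot assume normal. The only point requiring care — and the reason I avoid passing to cokernels — is that the ambient groups may be non-abelian, so $fL$, $gM$, $hL$ need not be normal; one must therefore argue with coset cardinalities and the coset bijection induced by $g$ rather than with quotient groups. The passage from $\cC=\Grp$ to general $\cC$ is automatic, since the fixed functor $\cC\to\Grp$ preserves fibre products and hence kernels, images and the indices involved, as the text already remarks.
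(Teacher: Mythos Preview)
Your proof is correct. The paper takes a different, more uniform route: it writes down a single six-term exact sequence of pointed sets
\[
1\to\ker f\to\ker h\to\ker g\to M/fL\to N/hL\to N/gM\to 1
\]
and observes that every map in it has non-empty fibres of constant cardinality; from this, any term sandwiched between two finite terms is finite (giving all three ``two out of three'' cases at once), and the alternating product of cardinalities equals $1$ (giving the index formula). Your argument is in effect an unpacking of this sequence by hand: the identities $\#\ker h=\#\ker f\cdot\#(fL\cap\ker g)$, $(gM:hL)=(M:fL\cdot\ker g)$, and $(fL\cdot\ker g:fL)=(\ker g:\ker g\cap fL)$ are precisely the constant-fibre-size statements for the individual maps. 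What you gain is explicitness and no reliance on the reader knowing the ``alternating product equals $1$'' lemma for such sequences; what the paper's approach buys is brevity and a single argument covering all three cases without the separate bootstrapping you do at the end. Your care about non-normality of $fL$ and $hL$ is well placed and is exactly why the paper speaks of pointed sets rather than quotient groups.
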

\begin{proof}
We have an exact sequence of pointed sets
\begin{eqnarray*}
1\rar \ker f\rar \ker h \rar \ker g \rar M/fL\rar N/hL \rar N/gM\rar 1,
\end{eqnarray*}
in which each map has the property that all its non-empty fibres have
equal cardinality. Hence, any term that sits between two finite
sets in the above sequence is itself finite. The first assertion
of the proposition easily follows. Moreover, if all terms
in the sequence are finite, then the alternating product of their
cardinalities is 1, which proves the second assertion.
\end{proof}

\begin{definition}\label{def:compos}
Let $c=(X,f,g)\colon L\rightleftharpoons M$
and $d=(Y,h,j)\colon M\rightleftharpoons N$ be correspondences in $\cC$.
We define the \emph{composition} of $c$ with $d$ by
$$
d\circ c=(X\times_M Y,f\circ p_0,j\circ p_1)\colon L\rightleftharpoons N,
$$
where $p_0$, $p_1$ are the canonical morphisms from
$X\times_M Y$ to $X$, respectively $Y$.
\end{definition}
\begin{remark}\label{rmrk:assoc}
It follows from the universal property of fibre products, and a routine diagram
chase, that composition
of correspondences is associative up to canonical isomorphism.
\end{remark}

\begin{proposition}\label{prop:fibre}
Let $X\stackrel{g}{\rar} M\stackrel{h}{\leftarrow} Y$ be morphisms in $\cC$,
and suppose that $h$ is an isogeny. Let $(W=X\times_MY,p_0,p_1)$ be the
fibre product of $X$ and $Y$ over $M$. Then:
\begin{enumerate}[leftmargin=*, label={\upshape(\alph*)}]
\item the morphism $p_0$ is an isogeny;
\item if the image of $g$ in $\Grp$ has finite kernel, then so does the image of $p_1$;
\item if $g$ is an isogeny, then so is $p_1$.
\end{enumerate}
\end{proposition}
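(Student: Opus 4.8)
The plan is to use the reduction to $\cC=\Grp$ sanctioned before the statement, so that $W=X\times_MY$ may be taken to be the explicit group $\{(x,y)\in X\times Y:g(x)=h(y)\}$ with $p_0$ and $p_1$ the two coordinate projections. Each of the three assertions then reduces to a short computation with kernels and images of group homomorphisms, so the emphasis is on arranging the bookkeeping cleanly rather than on overcoming a genuine difficulty.

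First I would record the two kernels. One has $\ker p_0=\{(e_X,y):h(y)=e_M\}$, and $(e_X,y)\mapsto y$ defines an isomorphism $\ker p_0\cong\ker h$; symmetrically, $(x,e_Y)\mapsto x$ defines an isomorphism $\ker p_1\cong\ker g$. Since $h$ is an isogeny, $\ker h$ is finite, so $\ker p_0$ is finite; this supplies the finiteness-of-kernel half of (a). The isomorphism $\ker p_1\cong\ker g$ is precisely assertion (b), and it also yields the finiteness of $\ker p_1$ needed in (c) as soon as $g$ is assumed to be an isogeny.

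Next I would handle the images. From the definition of $W$ one reads off immediately that $p_0W=g^{-1}(hY)$ and $p_1W=h^{-1}(gX)$. I would then invoke the elementary fact that, for a group homomorphism $\phi\colon G_1\to G_2$ and a subgroup $H$ of $G_2$, the assignment $x\cdot\phi^{-1}(H)\mapsto\phi(x)\cdot H$ is a well-defined injection from the set of left cosets of $\phi^{-1}(H)$ in $G_1$ to the set of left cosets of $H$ in $G_2$, whence $(G_1:\phi^{-1}(H))\le(G_2:H)$. Applying this with $\phi=g$, $H=hY$ gives $(X:p_0W)\le(M:hY)<\infty$, since $h$ is an isogeny, and combined with the previous paragraph this proves (a). Applying it with $\phi=h$, $H=gX$ gives $(Y:p_1W)\le(M:gX)<\infty$ whenever $g$ is an isogeny, and combined with $\#\ker p_1<\infty$ this proves (c).

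I do not expect any real obstacle in this argument; the only points that call for a moment's attention are the identifications $p_0W=g^{-1}(hY)$ and $p_1W=h^{-1}(gX)$ and the index inequality for preimages of subgroups, both of which are entirely routine. Consequently I expect the proof to be quite short.
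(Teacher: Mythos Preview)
Your proof is correct and follows essentially the same approach as the paper: reduce to $\Grp$, identify $\ker p_0\cong\ker h$ and $\ker p_1\cong\ker g$ directly from the explicit fibre product, and bound the index of the image via $p_0W=g^{-1}(hY)$ together with the inequality $(X:g^{-1}(hY))\le(M:hY)$. The paper phrases this last step as ``the kernel of $g\colon X\to M/hY$ equals $p_0W$'', which is the same observation, and then obtains (c) by symmetry rather than by rerunning the argument, but this is only a cosmetic difference.
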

\begin{proof}
We first prove part (a). We have
$$
\ker p_0 = \{(1,y)\in X\times Y:h(y) = g(1)=1\} \cong \ker h,
$$
which is finite by assumption.
Further, the kernel
of $g\colon X\rar M/hY$ is equal to $p_0W$, so $(X:p_0 W)\leq (M:hY)$, which is
also finite. So $p_0$ is an isogeny.

Similarly, $\ker p_1\cong \ker g$, which proves part (b).
Finally, part (c) is symmetric in $X$ and $Y$, and so follows from part (a).
\end{proof}

\begin{definition}\label{def:skew}
A \emph{skew correspondence} is a correspondence $c=(X,f,g)$ in which
$f$ is an isogeny.
\end{definition}

\begin{proposition}\label{prop:composition}
If $c\colon L\rightleftharpoons M$ and $d\colon M \rightleftharpoons N$ are
skew correspondences, respectively commensurabilities, then
$d\circ c\colon L \rightleftharpoons N$ is a skew correspondence,
respectively a commensurability. Moreover, if $c$ and $d$ are
commensurabilities, then we have
$$
\ii(d\circ c) = \ii(d)\ii(c).
$$
\end{proposition}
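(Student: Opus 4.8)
The plan is to reduce everything to the case $\cC=\Grp$, as the paper allows, and then to work directly with the concrete description of fibre products in $\Grp$. So let $c=(X,f,g)\colon L\rightleftharpoons M$ and $d=(Y,h,j)\colon M\rightleftharpoons N$, and set $W=X\times_M Y=\{(x,y):g(x)=h(y)\}$, with projections $p_0\colon W\to X$ and $p_1\colon W\to Y$, so that $d\circ c=(W,\,f p_0,\,j p_1)$.

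First I would handle the skew case. Here $f$ is an isogeny; I want to conclude that $f p_0$ is an isogeny. The idea is that $g\colon X\to M$ and $h\colon Y\to M$ fit into the setup of Proposition~\ref{prop:fibre}, but with the rôles arranged so that $h$ is the isogeny assumption. In the definition of $d$ being skew, it is $h\colon Y\to M$ that is the isogeny; hence by Proposition~\ref{prop:fibre}(a) (applied to $X\xrightarrow{g}M\xleftarrow{h}Y$) the projection $p_0\colon W\to X$ is an isogeny. Then $f p_0$ is a composition of two isogenies, so by Proposition~\ref{prop:isog} it is an isogeny, and $d\circ c$ is a skew correspondence.

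Next, the commensurability case. Now both $f,g,h,j$ are isogenies. By the skew argument just given (the second component of a commensurability being irrelevant to skewness) both $p_0$ and, by Proposition~\ref{prop:fibre}(c) applied with the rôles of $X$ and $Y$ interchanged, $p_1$ are isogenies. Hence $f p_0$ and $j p_1$ are each compositions of two isogenies, so both are isogenies by Proposition~\ref{prop:isog}, and $d\circ c$ is a commensurability. For the index, I would simply compute, using multiplicativity of indices along compositions (Proposition~\ref{prop:isog}) at every step:
\begin{equation*}
\ii(d\circ c)=\frac{\ii(j p_1)}{\ii(f p_0)}=\frac{\ii(j)\ii(p_1)}{\ii(f)\ii(p_0)}.
\end{equation*}
So it remains to show $\ii(p_1)/\ii(p_0)=\ii(h)/\ii(g)$, i.e. $\ii(g)\ii(p_1)=\ii(h)\ii(p_0)$; then the displayed quantity equals $\bigl(\ii(j)\ii(h)\bigr)/\bigl(\ii(f)\ii(g)\bigr)=\ii(d)\ii(c)$, as desired.

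The one genuine computation — and the only step I would expect to require care — is this identity $\ii(g)\ii(p_1)=\ii(h)\ii(p_0)$ relating the four isogenies in the fibre-product square. The cleanest route is to observe that $g p_0=h p_1\colon W\to M$, call it $q$, and apply multiplicativity (Proposition~\ref{prop:isog}) twice: $\ii(q)=\ii(g)\ii(p_0)=\ii(h)\ii(p_1)$. Wait — that gives $\ii(g)\ii(p_0)=\ii(h)\ii(p_1)$, which rearranges to $\ii(p_0)/\ii(p_1)=\ii(h)/\ii(g)$, and hence
\begin{equation*}
\ii(d\circ c)=\frac{\ii(j)\ii(p_1)}{\ii(f)\ii(p_0)}=\frac{\ii(j)}{\ii(f)}\cdot\frac{\ii(g)}{\ii(h)},
\end{equation*}
which is \emph{not} $\ii(d)\ii(c)=\bigl(\ii(j)/\ii(h)\bigr)\bigl(\ii(g)/\ii(f)\bigr)$. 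So I must be conflating which map is which: in $c$ the maps are $f\colon X\to L$ and $g\colon X\to M$ with $\ii(c)=\ii(g)/\ii(f)$, and in $d$ they are $h\colon Y\to M$ and $j\colon Y\to N$ with $\ii(d)=\ii(j)/\ii(h)$. Then $d\circ c=(W, f p_0, j p_1)$ has index $\ii(j p_1)/\ii(f p_0)=\bigl(\ii(j)\ii(p_1)\bigr)/\bigl(\ii(f)\ii(p_0)\bigr)$, and using $\ii(g)\ii(p_0)=\ii(h)\ii(p_1)$, i.e. $\ii(p_1)/\ii(p_0)=\ii(g)/\ii(h)$, this becomes $\bigl(\ii(j)/\ii(f)\bigr)\bigl(\ii(g)/\ii(h)\bigr)=\ii(d)\ii(c)$. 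Good — so the key lemma is precisely the ``square identity'' $\ii(g)\ii(p_0)=\ii(h)\ii(p_1)$, obtained by applying Proposition~\ref{prop:isog} to the two factorisations of $q=g p_0=h p_1$; once one is careful about orientation conventions, the rest is bookkeeping.
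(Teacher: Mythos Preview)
Your argument is correct and is precisely the ``routine diagram chase'' that the paper invokes but omits: you use Proposition~\ref{prop:fibre} to see that $p_0$ (and, in the commensurability case, $p_1$) is an isogeny, then Proposition~\ref{prop:isog} for the compositions, and finally the square identity $\ii(g)\ii(p_0)=\ii(h)\ii(p_1)$ coming from the two factorisations of $gp_0=hp_1$. The only stylistic remark is that the detour where you momentarily think the signs are wrong should be excised in a final write-up; the correct orientation is the one you settle on at the end.
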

\begin{proof}
The first two assertions follow immediately from Propositions \ref{prop:fibre}
and \ref{prop:isog}. The third one follows from Proposition \ref{prop:isog}
and a routine diagram chase, which we leave to the reader.
\end{proof}\noindent
We will now use skew correspondences in order to construct a category
$\cC_{\skew}$ in which all isogenies are invertible. One can show that
the class $\cI$ of isogenies in our category $\cC$ ``admits a calculus
of right fractions'' in the language of Gabriel and Zisman
\cite[Chapter I, Section 2]{GZ}; our $\cC_{\skew}$ is nothing but their
``category $\cC[\cI^{-1}]$ of fractions''.

\begin{definition}\label{def:equiv}
Let $c=(X,f,g)\colon L\rightleftharpoons M$ and
$d=(Y,h,j)\colon L\rightleftharpoons M$ be two correspondences.
We say that $c$ and $d$ are \emph{equivalent} if there exists
a commensurability $(W,p,q)\colon X\rightleftharpoons Y$ such that
$fp=hq$ and $gp=jq$. We will call such a commensurability
an \emph{equivalence} between $c$ and $d$.
\end{definition}

\begin{proposition}\label{prop:equivrel}
Being equivalent in the sense of Definition \ref{def:equiv} is an equivalence
relation.
\end{proposition}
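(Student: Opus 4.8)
The plan is to verify the three defining properties of an equivalence relation for the relation introduced in Definition~\ref{def:equiv}. Reflexivity is immediate: for a correspondence $c=(X,f,g)\colon L\rightleftharpoons M$, the triple $(X,\id_X,\id_X)\colon X\rightleftharpoons X$ is a commensurability (the identity is an isogeny of index $1$), and it obviously satisfies $f\id_X=f\id_X$ and $g\id_X=g\id_X$, so $c$ is equivalent to itself. Symmetry is equally formal: if $(W,p,q)\colon X\rightleftharpoons Y$ is an equivalence between $c=(X,f,g)$ and $d=(Y,h,j)$, then $(W,q,p)\colon Y\rightleftharpoons X$ is again a commensurability (a correspondence is a commensurability precisely when both legs are isogenies, a condition symmetric in the two legs), and it witnesses that $d$ is equivalent to $c$.

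The substance is transitivity. Suppose $c=(X,f,g)$, $d=(Y,h,j)$, $e=(Z,r,s)$ are correspondences $L\rightleftharpoons M$, that $(W,p,q)\colon X\rightleftharpoons Y$ is an equivalence between $c$ and $d$, and that $(W',p',q')\colon Y\rightleftharpoons Z$ is an equivalence between $d$ and $e$. The natural candidate for an equivalence between $c$ and $e$ is obtained by forming the fibre product $W\times_Y W'$ with respect to the maps $q\colon W\to Y$ and $p'\colon W'\to Y$; write $\pi_0\colon W\times_Y W'\to W$ and $\pi_1\colon W\times_Y W'\to W'$ for the canonical morphisms. I would then set $\widetilde{p}=p\,\pi_0\colon W\times_Y W'\to X$ and $\widetilde{q}=q'\,\pi_1\colon W\times_Y W'\to Z$, and claim that $(W\times_Y W',\widetilde{p},\widetilde{q})\colon X\rightleftharpoons Z$ is an equivalence between $c$ and $e$.

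There are two things to check. First, that $(W\times_Y W',\widetilde{p},\widetilde{q})$ is a commensurability, i.e.\ that $\widetilde{p}$ and $\widetilde{q}$ are isogenies. Since $q$ is an isogeny (being a leg of the commensurability $(W,p,q)$), Proposition~\ref{prop:fibre}(a) applied to $W\xrightarrow{q}Y\xleftarrow{p'}W'$ shows that $\pi_0$ is an isogeny; as $p$ is an isogeny, Proposition~\ref{prop:isog} gives that $\widetilde{p}=p\,\pi_0$ is an isogeny. Symmetrically, $p'$ is an isogeny, so $\pi_1$ is an isogeny by Proposition~\ref{prop:fibre}(c) (using that $q\colon W\to Y$ is an isogeny), and then $\widetilde{q}=q'\,\pi_1$ is an isogeny by Proposition~\ref{prop:isog}. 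Second, that the two compatibility equations hold: $f\widetilde{p}=r\widetilde{q}$ and $g\widetilde{p}=s\widetilde{q}$. This is a routine diagram chase. For the first, $f\widetilde{p}=f p\,\pi_0=h q\,\pi_0$ (using $fp=hq$), and $h q\,\pi_0=h p'\,\pi_1$ because $q\,\pi_0=p'\,\pi_1$ by the defining property of the fibre product $W\times_Y W'$; then $h p'\,\pi_1=r q'\,\pi_1=r\widetilde{q}$ (using $hp'=rq'$). The verification of $g\widetilde{p}=s\widetilde{q}$ is identical, replacing $f,h,r$ by $g,j,s$ and using $gp=jq$ and $jp'=sq'$. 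The only mild subtlety — and the step I would flag as the one requiring care — is making sure the fibre product is formed over the correct object ($Y$, via the maps $q$ and $p'$) so that the two inner equivalences can be "glued", and invoking Proposition~\ref{prop:fibre} with the right leg in the role of the isogeny; once the diagram is set up correctly everything else is formal, and the same verification applies in any $\cC$ of the type under consideration.
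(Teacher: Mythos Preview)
Your proof is correct and follows essentially the same approach as the paper: reflexivity and symmetry are handled identically, and for transitivity you form the composition $(W',p',q')\circ(W,p,q)=(W\times_Y W',\,p\pi_0,\,q'\pi_1)$, which the paper dispatches in one line by citing Proposition~\ref{prop:composition} while you unpack that proposition's proof (via Propositions~\ref{prop:fibre} and~\ref{prop:isog}) and supply the compatibility check explicitly. One small wording slip: when you invoke Proposition~\ref{prop:fibre}(a) to conclude that $\pi_0$ is an isogeny, the relevant hypothesis is that $p'$ (the right leg) is an isogeny, not $q$; since both $q$ and $p'$ are isogenies this does not affect the argument.
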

\begin{proof}
The relation is clearly symmetric. Reflexivity is also clear, since
an equivalence between $(X,f,g)$ and itself is
given by $(X,\id,\id)\colon X\rightleftharpoons X$. Transitivity
follows from Proposition \ref{prop:composition}.
\end{proof}\noindent
Note that
Definition \ref{def:equiv} describes the smallest equivalence relation
on the set of correspondences $L\rightleftharpoons M$ for which
$(X,f,g)$ is equivalent to $(W,fp,gp)$ whenever $p\colon W\rar X$ is an isogeny.

\begin{definition}\label{def:inverse}
The \emph{inverse} of a correspondence $c=(X,f,g)\colon L\rightleftharpoons M$
is defined to be $c^{-1}=(X,g,f)\colon M\rightleftharpoons L$.
\end{definition}

\begin{lemma}\label{lem:equivinverse}
Let $c$, $c'\colon L\rightleftharpoons M$ and $d\colon M \rightleftharpoons N$
be correspondences. Then:
\begin{enumerate}[leftmargin=*, label={\upshape(\alph*)}]
\item the correspondence $(d\circ c)^{-1}\colon N\rightleftharpoons L$
is equivalent to the composition $c^{-1}\circ d^{-1}$;
\item if $c$ is equivalent to $c'$, then $c^{-1}\colon M\rightleftharpoons L$
is equivalent to $(c')^{-1}$.
\end{enumerate}
\end{lemma}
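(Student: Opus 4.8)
The plan is to obtain both parts by unwinding the definitions of composition (Definition~\ref{def:compos}), inverse (Definition~\ref{def:inverse}) and equivalence (Definition~\ref{def:equiv}); the only structural input needed is the canonical symmetry isomorphism of a fibre product, and I do not anticipate any serious difficulty.

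I would dispose of (b) first. Write $c=(X,f,g)$ and $c'=(Y,h,j)$, both $L\rightleftharpoons M$. By Definition~\ref{def:equiv}, an equivalence between them is a commensurability $(W,p,q)\colon X\rightleftharpoons Y$ satisfying $fp=hq$ and $gp=jq$. But this same pair of equations, read in the other order, is exactly what is required for $(W,p,q)$ to be an equivalence between $c^{-1}=(X,g,f)$ and $(c')^{-1}=(Y,j,h)$, regarded as correspondences $M\rightleftharpoons L$. So the very same $(W,p,q)$ does the job, and (b) is immediate.

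For (a), write $c=(X,f,g)\colon L\rightleftharpoons M$ and $d=(Y,h,j)\colon M\rightleftharpoons N$. The first step is to build the swap isomorphism: letting $(X\times_M Y,p_0,p_1)$ be the fibre product of $X\stackrel{g}{\to}M\stackrel{h}{\leftarrow}Y$ and $(Y\times_M X,q_0,q_1)$ that of $Y\stackrel{h}{\to}M\stackrel{g}{\leftarrow}X$, the relation $gp_0=hp_1$ lets the universal property of $Y\times_M X$ produce a unique $\tau\colon X\times_M Y\to Y\times_M X$ with $q_0\tau=p_1$ and $q_1\tau=p_0$; the mirror construction together with the uniqueness clauses shows that $\tau$ is an isomorphism, hence an isogeny, so $(X\times_M Y,\id,\tau)$ is a commensurability. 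The second step is to compute that $(d\circ c)^{-1}=(X\times_M Y, jp_1, fp_0)\colon N\rightleftharpoons L$, whereas, using $c^{-1}=(X,g,f)$ and $d^{-1}=(Y,j,h)$, one gets $c^{-1}\circ d^{-1}=(Y\times_M X, jq_0, fq_1)\colon N\rightleftharpoons L$. The third step is to verify that $(X\times_M Y,\id,\tau)$ is an equivalence between these two: $(jp_1)\circ\id=jp_1=(jq_0)\circ\tau$ and $(fp_0)\circ\id=fp_0=(fq_1)\circ\tau$, by the defining identities $q_0\tau=p_1$ and $q_1\tau=p_0$. That finishes (a).

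As for the main obstacle: there really is none, since both statements are formal. The only thing that will need a little care is the bookkeeping in (a) — keeping track of which fibre product is taken over which ordered pair of maps, and in which direction $\tau$ runs, so that the identities $q_0\tau=p_1$ and $q_1\tau=p_0$ are precisely the ones that make the two equivalence conditions come out. One could instead first establish that a composite of equivalences is an equivalence and argue more abstractly, but the explicit $\tau$ above looks shorter.
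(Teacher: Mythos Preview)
Your proof is correct and is exactly the sort of direct verification the paper has in mind; indeed, the authors write only ``The proof is easy, and is left to the reader.'' Both parts unwind the definitions just as you do: (b) by observing that the same commensurability $(W,p,q)$ serves as an equivalence, and (a) via the canonical swap isomorphism $\tau$ of fibre products.
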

\begin{proof}
The proof is easy, and is left to the reader.
\end{proof}

\begin{proposition}\label{prop:equivcompos}
Let $c$, $c'\colon L\rightleftharpoons M$ and $d$, $d'\colon M\rightleftharpoons N$
be correspondences. Suppose that $c$ is equivalent to $c'$,
and $d$ is equivalent to $d'$. Then $d\circ c$ is equivalent
to $d'\circ c'$.
\end{proposition}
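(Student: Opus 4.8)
The plan is to reduce Proposition~\ref{prop:equivcompos} to the already-established compatibility of equivalence with composition from one side, applied twice. Concretely, I would first prove the one-sided statement: if $c$ is equivalent to $c'$, then $d\circ c$ is equivalent to $d\circ c'$; and symmetrically, if $d$ is equivalent to $d'$, then $d\circ c'$ is equivalent to $d'\circ c'$. Granting these two, transitivity of equivalence (Proposition~\ref{prop:equivrel}) immediately gives that $d\circ c$ is equivalent to $d'\circ c'$, which is what we want.

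For the first one-sided statement, let $(W,p,q)\colon X\rightleftharpoons Y$ be an equivalence between $c=(X,f,g)$ and $c'=(Y,f',g')$, so that $fp=f'q$ and $gp=g'q$, with $p$ and $q$ isogenies. Write $d=(Z,h,j)\colon M\rightleftharpoons N$, so $d\circ c=(X\times_MZ,f\circ\pi_0,j\circ\pi_1)$ and $d\circ c'=(Y\times_MZ,f'\circ\pi_0',j\circ\pi_1')$. The natural candidate for an equivalence between these two is built from the fibre product $W\times_M Z$, where $W$ maps to $M$ via $gp=g'q$ (equivalently via $fp$ composed with $g$, but the relevant map into $M$ is the one through which the fibre products are formed — here it is $g$ on the $X$-side, i.e. we use $gp$): the projections $W\to X$ and $W\to Y$, together with the identity on $Z$, induce morphisms $W\times_M Z\to X\times_M Z$ and $W\times_M Z\to Y\times_M Z$ by the universal property of fibre products. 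One checks the two commutativity conditions of Definition~\ref{def:equiv} by a routine diagram chase, using $fp=f'q$, $gp=g'q$, and the defining equalities of the three fibre products involved. It remains to see that $W\times_M Z\to X\times_M Z$ and $W\times_M Z\to Y\times_M Z$ form a \emph{commensurability} (not merely a correspondence): since $p$ is an isogeny, Proposition~\ref{prop:fibre}(a), applied to the diagram $W\to M\leftarrow(X\times_M Z)$ in an appropriate guise — more precisely, one sees $W\times_M Z$ as a fibre product over $X$ of $W$ and $X\times_M Z$, using that fibre products compose — shows that the projection to $X\times_M Z$ is an isogeny, and symmetrically for the projection to $Y\times_M Z$ using that $q$ is an isogeny. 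Hence we get a genuine equivalence. The second one-sided statement is entirely symmetric, forming the equivalence from $X\times_M W'$ where $(W',p',q')$ is an equivalence between $d$ and $d'$.

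I expect the main obstacle to be bookkeeping: getting the fibre-product identifications straight — in particular the standard fact that fibre products compose, so that $W\times_M Z$ may be viewed as $(X\times_M Z)\times_X W$, which is precisely what lets Proposition~\ref{prop:fibre}(a) do its job — and verifying that the induced maps really satisfy the four equalities required to witness equivalence. None of this is deep; it is the kind of diagram chase the paper has been leaving to the reader (cf.\ Lemma~\ref{lem:equivinverse} and Remark~\ref{rmrk:assoc}), so I would state the construction of the two induced morphisms explicitly, invoke Proposition~\ref{prop:fibre}(a) twice for the isogeny property, and leave the commutativity verifications as a routine check.
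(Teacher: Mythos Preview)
Your argument is correct. The overall reduction to two one-sided statements plus transitivity is exactly what the paper does, and your treatment of the first one-sided statement (varying $c$ with $d$ fixed) is essentially the paper's argument, compressed: the paper first handles the sub-case where the equivalence between $c$ and $c'$ is given by a single isogeny $p\colon W\to X$, observes that $W\times_M Y$ is the fibre product of $W$ and $X\times_M Y$ over $X$, and invokes Proposition~\ref{prop:fibre}(a); it then bootstraps to a general equivalence $(W,p,q)$ by writing $c\sim(W,fp,gp)=(W,f'q,g'q)\sim c'$. You do this in one step, which is fine.

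Where you genuinely diverge is in the second one-sided statement (varying $d$ with $c$ fixed). You propose to redo the same construction symmetrically, forming $X\times_M W'$ and again invoking the composition-of-fibre-products identification together with Proposition~\ref{prop:fibre}(a). This works. The paper instead avoids the second direct construction by passing to inverses: using Lemma~\ref{lem:equivinverse}, it writes $(d\circ c)^{-1}\sim c^{-1}\circ d^{-1}\sim c^{-1}\circ(d')^{-1}\sim(d'\circ c)^{-1}$, where the middle step is the already-proved first one-sided statement applied to $d^{-1}\sim(d')^{-1}$, and then inverts back. Your route is more symmetric and self-contained; the paper's route is a small trick that recycles the first case at the cost of relying on Lemma~\ref{lem:equivinverse}. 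Neither has a real advantage in difficulty.
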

\begin{proof}
Let $c=(X,f,g)$, $d=(Y,h,j)$.

First, we prove the proposition in the special
case that $d'=d$, and $c'=(W,fp,gp)$, where
$p\colon W\rar X$ is an isogeny.
Let $(X\times_M Y,p_0,p_1)$ be the fibre product of the diagram
$X\stackrel{g}{\rar} M\stackrel{h}{\leftarrow} Y$, and let
$(W\times_M Y,p_0',p_1')$ be the fibre
product of the diagram $W\stackrel{gp}{\rar} M\stackrel{h}{\leftarrow} Y$.
Thus $d\circ c=(X\times_MY,fp_0,jp_1)$,
and $d\circ c'=(W\times_MY,fpp_0',jp_1')\colon L\rightleftharpoons N$.
Since $gpp_0' = hp_1'$, the universal property of fibre products
guarantees the existence of a unique map
$i\colon W\times_M Y\rar X\times_M Y$ with the property that 
$pp_0'=p_0i$ and $p_1' = p_1i$:
$$
\xymatrix{
& & W\times_M Y\ar[dl]_{p_0'}\ar[d]^{i}\ar[ddr]^{p_1'}&\\
& W\ar[d]^p & X\times_M Y \ar[dl]^{p_0}\ar[dr]_{p_1}& &\\
& X\ar[dl]^f\ar[dr]_g & & Y\ar[dl]^h\ar[dr]_j &\\
\;\;\;\;L\;\; & & M & & \;\;N.\;\;\;\;
}
$$
Moreover, it is easy to see that $(W\times_MY,p_0',i)$
is the fibre product of the diagram
$W\stackrel{p}{\rar} X\stackrel{p_0}{\leftarrow}X\times_MY$. It follows
from Proposition \ref{prop:fibre} that $i$ is an isogeny, which proves
that $d\circ c$ is equivalent to $d\circ c'$.

Now, we prove the proposition in the special case that $d=d'$,
and $c'$ is arbitrary. Write $\sim$ for the equivalence
relation between correspondences. Let $c'=(X',f',g')$, and let
$(W,p,q)\colon X\rightleftharpoons X'$ be an equivalence between $c$ and $c'$.
Since $p$ is an isogeny, we have $c\sim (W,fp,gp)=(W,f'q,g'q)$, and since $q$ is
an isogeny, we have $(W,f'q,g'q)\sim c'$. 
We deduce from the special case of the proposition that we just proved
that $d\circ c\sim d\circ (W,fp,gp)\sim d\circ c'$.

Now, we prove the general case. By Lemma \ref{lem:equivinverse}
and by the special case we just proved, we have
$$
(d\circ c)^{-1} \sim c^{-1}\circ d^{-1}\sim c^{-1}\circ (d')^{-1}\sim (d'\circ c)^{-1}.
$$
It therefore follows from Lemma \ref{lem:equivinverse}(b),
that $d\circ c\sim d'\circ c$. By the special case of the proposition that we
proved already, we also have $d'\circ c \sim d'\circ c'$, and the proposition follows.
\end{proof}

\begin{proposition}\label{prop:equiv}
If $c$ and $d$ are two equivalent commensurabilities, then $\ii(c)=\ii(d)$.
\end{proposition}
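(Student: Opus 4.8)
The plan is to reduce the statement to the special case in which $d$ is obtained from $c$ by precomposition with an isogeny, and then to invoke the multiplicativity of the index under composition (Proposition~\ref{prop:composition}). Concretely, suppose $c=(X,f,g)$ and $d=(Y,h,j)$ are equivalent commensurabilities $L\rightleftharpoons M$, and let $(W,p,q)\colon X\rightleftharpoons Y$ be an equivalence between them, so that $fp=hq$ and $gp=jq$, with both $p$ and $q$ isogenies. The first step is to observe that, because $c$ is a commensurability and $p$ is an isogeny, the triple $(W,fp,gp)$ is again a commensurability: $fp$ and $gp$ are each a composition of two isogenies, hence isogenies by Proposition~\ref{prop:isog}. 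The same reasoning applies to $(W,hq,jq)=(W,fp,gp)$, so all of $c$, $d$, and $(W,fp,gp)$ are commensurabilities, and it suffices to show $\ii(c)=\ii(W,fp,gp)$ and, symmetrically, $\ii(d)=\ii(W,hq,jq)$; since $(W,fp,gp)=(W,hq,jq)$ as correspondences, the result follows.

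The core computation is therefore the following: if $c=(X,f,g)$ is a commensurability and $p\colon W\to X$ is an isogeny, then $\ii(W,fp,gp)=\ii(c)$. By definition $\ii(c)=\ii(g)/\ii(f)$ and $\ii(W,fp,gp)=\ii(gp)/\ii(fp)$. Applying Proposition~\ref{prop:isog} to the compositions $W\xrightarrow{p}X\xrightarrow{f}L$ and $W\xrightarrow{p}X\xrightarrow{g}M$ gives $\ii(fp)=\ii(f)\ii(p)$ and $\ii(gp)=\ii(g)\ii(p)$, so the factor $\ii(p)$ cancels and $\ii(W,fp,gp)=\ii(g)\ii(p)/(\ii(f)\ii(p))=\ii(g)/\ii(f)=\ii(c)$. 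Here I am using that $\ii(p)$ is a finite nonzero rational number, which holds because $p$ is an isogeny; this legitimises the cancellation. Running the same argument with $q$ in place of $p$ and $d$ in place of $c$ yields $\ii(d)=\ii(W,hq,jq)$, and since $fp=hq$, $gp=jq$ we have $(W,fp,gp)=(W,hq,jq)$, whence $\ii(c)=\ii(d)$.

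I do not anticipate a serious obstacle here: the only point requiring a little care is that an equivalence in the sense of Definition~\ref{def:equiv} is witnessed by a \emph{commensurability} $(W,p,q)$ rather than by a single isogeny, so one cannot immediately appeal to the remark following Definition~\ref{def:skew} that $(X,f,g)\sim(W,fp,gp)$ when $p$ is an isogeny. One must instead factor the equivalence through the intermediate commensurability $(W,fp,gp)=(W,hq,jq)$ and apply the preceding paragraph to each of the two isogeny legs $p$ and $q$ separately. Alternatively, one can phrase the whole argument via Proposition~\ref{prop:composition}: writing $c_p=(W,\id,p)$ and $c_q=(W,\id,q)$ for the commensurabilities associated to the isogenies $p$ and $q$, one has $c\circ c_p=(W',\ldots)$ equivalent to $(W,fp,gp)$ and similarly for $q$, and multiplicativity of $\ii$ together with $\ii(c_p)=\ii(c_q)$ (both equal to a common value need not hold — in fact $\ii(c_p)=1/\ii(p)$), so this variant requires tracking the indices of $c_p$ and $c_q$; the direct computation above is cleaner and is the route I would take.
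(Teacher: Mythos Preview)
Your argument is correct and is exactly the unpacking of the paper's one-line proof, which simply says ``This is an immediate consequence of Proposition~\ref{prop:isog}.'' You have spelled out that consequence: pass to the common refinement $(W,fp,gp)=(W,hq,jq)$ and use $\ii(fp)=\ii(f)\ii(p)$, $\ii(gp)=\ii(g)\ii(p)$ to cancel $\ii(p)$, and likewise for $q$.
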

\begin{proof}
This is an immediate consequence of Proposition \ref{prop:isog}.
\end{proof}\noindent
The term ``inverse'' is justified by the following result.
\begin{proposition}\label{prop:inverse}
Given a commensurability $c=(X,f,g)\colon L\rightleftharpoons M$, the composition
$c^{-1}\circ c\colon L\rightleftharpoons L$ is equivalent
to the commensurability $(L,\id,\id)$, and the composition
$c\circ c^{-1}\colon M\rightleftharpoons M$ is equivalent to the commensurability
$(M,\id,\id)$.
\end{proposition}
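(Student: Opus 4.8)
The plan is to compute the composition $c^{-1}\circ c$ directly from Definition~\ref{def:compos} and then write down an explicit equivalence with $(L,\id,\id)$. Writing $c=(X,f,g)$, so that $c^{-1}=(X,g,f)$ by Definition~\ref{def:inverse}, one gets $c^{-1}\circ c=(X\times_M X,\,f\circ p_0,\,f\circ p_1)\colon L\rightleftharpoons L$, where $(X\times_M X,p_0,p_1)$ denotes the fibre product of $X\stackrel{g}{\rar}M\stackrel{g}{\leftarrow}X$. Applying the universal property of this fibre product to the cone $X\stackrel{\id}{\rar}X$, $X\stackrel{\id}{\rar}X$ (which is legitimate since $g\circ\id=g\circ\id$), I obtain a unique ``diagonal'' morphism $\Delta\colon X\rar X\times_M X$ with $p_0\circ\Delta=p_1\circ\Delta=\id_X$.

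The only step that is not a pure diagram chase is checking that $\Delta$ is an isogeny, and this follows at once from the results already proved: since $g$ is an isogeny, Proposition~\ref{prop:fibre}(a) shows that $p_0\colon X\times_M X\rar X$ is an isogeny, and then from $p_0\circ\Delta=\id_X$ together with Proposition~\ref{prop:isog} (two of $\Delta$, $p_0$, $\id_X$ are isogenies) one concludes that $\Delta$ is an isogeny. Hence $(X,\Delta,f)\colon(X\times_M X)\rightleftharpoons L$ is a commensurability, both of its morphisms being isogenies, and it is an equivalence between $c^{-1}\circ c=(X\times_M X,f\circ p_0,f\circ p_1)$ and $(L,\id,\id)$ in the sense of Definition~\ref{def:equiv}, since $(f\circ p_0)\circ\Delta=f=(f\circ p_1)\circ\Delta$ and $\id\circ f=f$. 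This gives the first assertion.

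For the second assertion I would simply apply the first one to the commensurability $c^{-1}\colon M\rightleftharpoons L$ in place of $c$: as $(c^{-1})^{-1}=c$, it yields that $c\circ c^{-1}=(c^{-1})^{-1}\circ c^{-1}\colon M\rightleftharpoons M$ is equivalent to $(M,\id,\id)$. Equivalently, one repeats the argument verbatim with the fibre product of $X\stackrel{f}{\rar}L\stackrel{f}{\leftarrow}X$, using the isogeny $g$ where $f$ was used above. I do not expect a genuine obstacle here: the construction of $\Delta$ and the verification that it is an isogeny, both handled by Propositions~\ref{prop:fibre} and~\ref{prop:isog}, are the crux, and everything else is bookkeeping with the definitions of composition, inverse, and equivalence.
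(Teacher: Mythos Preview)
Your proof is correct and follows essentially the same approach as the paper: the paper also forms the fibre product $X\times_M X$ over $g$, produces the diagonal map (called $i$ there) via the universal property, checks it is an isogeny using Propositions~\ref{prop:fibre} and~\ref{prop:isog}, and exhibits $(X,i,f)$ as the equivalence. The second assertion is likewise obtained by applying the first to $c^{-1}$, exactly as you do.
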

\begin{proof}
First, we prove the assertion on $c^{-1}\circ c$.
By definition, 
$c^{-1}\circ c=(X\times_MX,fp_0,fp_1)\colon L\rightleftharpoons L$,
where $(X\times_MX,p_0,p_1)$ is the fibre product of the diagram
$X\stackrel{g}{\rar} M \stackrel{g}{\leftarrow}X$.
By the universal property of the fibre product, we have a unique map
$i\colon X\rar X\times_MX$ with the property that $p_0i=p_1i=\id\colon X\rar X$.
$$
\xymatrix@R=4.5ex{
& X\ar[d]_(0.4)i\ar@/^5ex/[dd]^(0.3)f & \\
& X\times_M X\ar[dl]_(.6){p_0}\ar[dr]^(.6){p_1} & \\
X \ar[d]_{f}\ar[dr]^(.3){g} & L\ar[dl]_(.3){\id}\ar[dr]^(.3){\id} & X\ar[d]^f\ar[dl]_(.3){g}\\
L & M & L
}
$$
Since $g$ is an isogeny, it follows from Proposition \ref{prop:fibre} that
$p_0$ is an isogeny. By Proposition \ref{prop:isog}, the morphism $i$ is also an isogeny,
so $(X,i,f)\colon X\times_MX\rightleftharpoons L$ defines an equivalence between
$c^{-1}\circ c$ and $(L,\id,\id)$.

The claim for
$c\circ c^{-1}$ follows by applying the result just proved to $c^{-1}$ in place of $c$.
\end{proof}

\begin{definition}\label{def:skewcom}
We define $\cC_{\skew}$ to
be the category with the same objects as in $\cC$, and where, for objects
$L$, $M$, the morphisms from $L$ to $M$ are the equivalence classes
of skew correspondences $L\rightleftharpoons M$. We also
define $\cC_{\com}$ to be
the category with the same objects, and where the morphisms from $L$ to $M$
are the equivalence classes of commensurabilities $L\rightleftharpoons M$.
It follows from Remark \ref{rmrk:assoc} and Propositions \ref{prop:equivrel}
and \ref{prop:equivcompos}, that these are indeed categories, i.e. that
composition of morphisms
is well-defined and associative.
\end{definition}\noindent
Proposition \ref{prop:inverse} implies that $\cC_{\com}$ is a (generally large)
groupoid, i.e. every morphism in $\cC_{\com}$ is an isomorphism. In fact,
we have the following sharper result.

\begin{proposition}\label{prop:maxgroupoid}
The category $\cC_{\com}$ is the maximal subgroupoid of $\cC_{\skew}$.
\end{proposition}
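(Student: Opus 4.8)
The plan is to prove the two inclusions separately. First I would show that every morphism in $\cC_{\com}$ is an isomorphism in $\cC_{\skew}$; this is essentially Proposition~\ref{prop:inverse}. Indeed, a commensurability $c$ is in particular a skew correspondence, so its equivalence class is a morphism in $\cC_{\skew}$, and the equivalence class of $c^{-1}$ is also a morphism in $\cC_{\skew}$ (since $c^{-1}$ is a commensurability, hence a skew correspondence); Proposition~\ref{prop:inverse} says that, as morphisms in $\cC_{\skew}$, the class of $c^{-1}$ is a two-sided inverse of the class of $c$. Hence every morphism of $\cC_{\com}$ is an isomorphism in $\cC_{\skew}$, which shows $\cC_{\com}$ is contained in the maximal subgroupoid.

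For the reverse inclusion I would take a skew correspondence $c=(X,f,g)\colon L\rightleftharpoons M$ whose equivalence class is an isomorphism in $\cC_{\skew}$, and show that $c$ is equivalent to a commensurability, i.e.\ that $g$ becomes an isogeny after replacing $X$ by an isogenous object. By hypothesis there is a skew correspondence $d=(Y,h,j)\colon M\rightleftharpoons L$ whose class is inverse to that of $c$, so that $d\circ c$ is equivalent to $(L,\id,\id)$. Unwinding the definition of composition, $d\circ c=(X\times_MY,\,f p_0,\,j p_1)$, and an equivalence with $(L,\id,\id)$ is a commensurability $(W,p,q)\colon X\times_MY\rightleftharpoons L$ with $f p_0 p=p$ (reading $q$ as the identity side) — more precisely $(fp_0)p = q$ and $(jp_1)p = q$ as maps $W\to L$, with $q$ an isogeny. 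The key observation is then that $q = (fp_0)p$ is an isogeny, $p$ is an isogeny (it is one leg of a commensurability), hence by Proposition~\ref{prop:isog} the map $fp_0$ is an isogeny, and since $f$ is an isogeny, Proposition~\ref{prop:isog} again gives that $p_0$ is an isogeny. But $(X\times_MY,p_0,p_1)$ is a fibre product over $g\colon X\to M$ and $h\colon Y\to M$, and $h$ is an isogeny; I want to deduce from $p_0$ being an isogeny that $g$ is an isogeny. This is the converse direction to Proposition~\ref{prop:fibre}(a), and it is the step I expect to be the main obstacle: one has $\#\ker g \mid \#\ker p_0\cdot$(something) via the identification $\ker p_0\cong\ker h$ coming from the fibre-product description, and $(X:p_0W)$ controls $(M:gY + hY\cap\dots)$; concretely, from $W=X\times_MY=\{(x,y):g(x)=h(y)\}$ one reads off that $p_0W=g^{-1}(hY)$, so $(X:p_0W)=(M:hY)$ is finite automatically, and $\ker p_0\cong\ker h$ is finite automatically — so $p_0$ is \emph{always} an isogeny and carries no information about $g$. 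Thus the naive approach stalls, and the real content must come from using $p_1$ as well.

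To repair this, I would instead exploit that $d\circ c\sim(L,\id,\id)$ forces the index computation $\ii(d)\ii(c)=1$ via Propositions~\ref{prop:composition} and~\ref{prop:equiv} — but $c$ is only a skew correspondence, so $\ii(c)$ need not be defined; rather, I should use the second leg. The clean route: since the class of $c$ is invertible in $\cC_{\skew}$, so is the class of $c^{-1}=(X,g,f)$... but $c^{-1}$ need not be skew. The honest fix is to argue directly on the equivalence $d\circ c\sim(L,\id,\id)$: writing this out, there is an object $W$ and isogeny-or-commensurability legs producing a commutative diagram from which $gp_1'=hq_1'$ type relations force $g$ to be surjective up to finite index and to have finite kernel. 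Concretely, from $d\circ c\sim (L,\id,\id)$ we get a commensurability $e$ with both legs mapping onto $L$ compatibly; pulling back through $p_1\colon X\times_MY\to Y$ and using that the composite $X\times_MY\to Y\xrightarrow{h}M$ equals $X\times_MY\xrightarrow{p_0}X\xrightarrow{g}M$, together with $h$ an isogeny and the established fact that $p_0$ is an isogeny, Proposition~\ref{prop:isog} gives that $g p_0$ is an isogeny, hence $(X:(gp_0)^{-1}(\text{fin}))$ is finite, which pushes forward to the conclusion $(M:gX)<\infty$ and $\#\ker g<\infty$. Once $g$ is shown to be an isogeny, $c=(X,f,g)$ is a commensurability, so its class lies in $\cC_{\com}$, completing the reverse inclusion. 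I would present the fibre-product bookkeeping in the above diagram explicitly, since that is where all the subtlety sits, and relegate the remaining diagram chases to the reader as the paper does elsewhere.
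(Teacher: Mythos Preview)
Your first inclusion is fine and matches the paper. The second inclusion has a genuine gap.

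You only ever use the relation $d\circ c\sim(L,\id,\id)$, never $c\circ d\sim(M,\id,\id)$, and the former alone is not enough to force $g$ to be an isogeny. Here is a counterexample in $\Grp$: take $L=\Z$, $M=\Z\oplus\Z$, $X=\Z$, $f=\id$, $g(x)=(x,0)$; and $Y=\Z\oplus\Z$, $h=\id$, $j(a,b)=a$. Then $X\times_MY\cong\Z$ and $d\circ c=(\Z,\id,\id)$ is literally the identity, yet $g$ has image of infinite index in $M$. So no amount of bookkeeping with $d\circ c$ alone will close the argument. Relatedly, your claimed step ``$h$ an isogeny and $p_0'$ an isogeny, so Proposition~\ref{prop:isog} gives $gp_0'$ is an isogeny'' is wrong: $gp_0'=hp_1'$, and Proposition~\ref{prop:isog} would need $p_1'$ to be an isogeny, which it is not in the example above.

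The paper's proof uses \emph{both} compositions. From $d\circ c=(X\times_MY,fp_0',jp_1')$ being a commensurability one gets that $jp_1'$ is an isogeny, hence $\ker p_1'$ is finite; since $\ker p_1'\cong\ker g$ by the explicit fibre-product description, this gives $\#\ker g<\infty$. From $c\circ d=(Y\times_LX,hp_0,gp_1)$ being a commensurability one gets that $gp_1$ is an isogeny, hence its image has finite index in $M$, which forces $(M:gX)<\infty$. The two halves of the isogeny condition on $g$ thus come from the two different compositions.
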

\begin{proof}
Let $c=(X,f,g)\colon L\rightleftharpoons M$ be a skew correspondence,
and let $d=(Y,h,j)\colon M\rightleftharpoons L$ be a two-sided inverse
in $\cC_{\skew}$. So $d\circ c$ is equivalent to the commensurability
$(L,\id,\id)\colon L\rightleftharpoons L$, while $c\circ d$ is equivalent
to $(M,\id,\id)\colon M\rightleftharpoons M$, and in particular both
compositions are commensurabilities. We wish to prove that $g$ is then necessarily
an isogeny, and for this it is enough to assume that $\cC=\Grp$.

Let $(Y\times_L X,p_0,p_1)$
be the fibre product of the diagram $Y\stackrel{j}{\rar} L\stackrel{f}{\leftarrow}X$,
and let $(X\times_M Y,p_0',p_1')$
be the fibre product of the diagram $X\stackrel{g}{\rar} M\stackrel{h}{\leftarrow}Y$.
Since $c\circ d$ is a commensurability, the morphism $gp_1$ is an isogeny,
so $(M:gX)$ is finite. Also, since $d\circ c$ is a commensurability,
the morphism $jp_1'$ is an isogeny, so $\ker p_1'$ is finite.
But $\ker p_1'=\{(x,1)\in X\times Y: g(x) = h(1) = 1\}\cong \ker g$.
So $g$ is an isogeny, as claimed.
\end{proof}

\begin{theorem}\label{thm:G_L}
Let $L$ be an object in $\cC$. Then, the set $\cG_L$ of equivalence classes
of commensurabilities $L\rightleftharpoons L$ forms a group under composition,
and the map $\ii$ induces a group homomorphism $\cG_L\rar \Q_{>0}$.
\end{theorem}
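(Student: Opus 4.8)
The plan is to verify directly that $\cG_L$ is a group and that $\ii$ descends to a homomorphism, drawing on the formal machinery already assembled in this section. First I would check that composition of commensurabilities $L\rightleftharpoons L$ is a well-defined binary operation on equivalence classes: Proposition~\ref{prop:composition} shows that the composite of two commensurabilities is again a commensurability, and Proposition~\ref{prop:equivcompos} shows that composition respects the equivalence relation, so the operation on $\cG_L$ is well-defined. Associativity is immediate from Remark~\ref{rmrk:assoc} together with Proposition~\ref{prop:equivrel}. The identity element is the class of $(L,\id,\id)$, which is a commensurability since the identity is an isogeny of index~$1$; that it acts as a two-sided identity follows from the universal property of the fibre product (composing with an identity correspondence reproduces the correspondence up to canonical isomorphism, hence up to equivalence). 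Inverses are supplied by Definition~\ref{def:inverse} and Proposition~\ref{prop:inverse}: for a commensurability $c$, the class of $c^{-1}$ is a two-sided inverse of the class of $c$, since $c^{-1}\circ c$ and $c\circ c^{-1}$ are equivalent to the respective identity correspondences.

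Next I would address the map $\ii$. By Proposition~\ref{prop:equiv}, the index of a commensurability depends only on its equivalence class, so $\ii$ is a well-defined function $\cG_L\to\Q_{>0}$; note $\ii$ takes values in $\Q_{>0}$ because it is a quotient of two positive integers. That it is a homomorphism is precisely the multiplicativity statement $\ii(d\circ c)=\ii(d)\ii(c)$ of Proposition~\ref{prop:composition}, applied with $M=N=L$. Since $\ii$ of the identity correspondence is $1$, everything fits together.

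I do not expect any real obstacle here: every ingredient has been established earlier in the section, and the proof is essentially a matter of assembling Propositions~\ref{prop:composition}, \ref{prop:equivcompos}, \ref{prop:equiv} and \ref{prop:inverse}, together with Remark~\ref{rmrk:assoc}, in the right order. If there is a subtlety, it is the purely set-theoretic point that the equivalence classes of commensurabilities $L\rightleftharpoons L$ form a \emph{set} rather than a proper class; this is why the theorem is phrased for $\cG_L$ specifically, and one either checks it directly in the concrete categories of interest (groups, rings, finitely generated modules over a noetherian ring) or simply notes, as the surrounding text does when it calls $\cC_{\com}$ a ``generally large'' groupoid, that the relevant smallness holds in those examples. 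I would handle this with a brief remark rather than belabour it, since the categories to which the theorem is applied in the sequel are all locally small in the required sense.
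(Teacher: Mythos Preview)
Your proposal is correct and follows essentially the same approach as the paper, just unpacked more explicitly: the paper compresses your verification of the group axioms into the single observation that $\cG_L=\Hom_{\cC_{\com}}(L,L)$ is an endomorphism set in a groupoid (the category structure having been recorded in Definition~\ref{def:skewcom} via the same Propositions you cite, and invertibility via Proposition~\ref{prop:inverse}), and then invokes Propositions~\ref{prop:composition} and~\ref{prop:equiv} for the homomorphism assertion exactly as you do.
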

\begin{proof}
The first assertion follows from the fact that $\cG_L=\Hom_{\cC_{\com}}(L,L)$.
The second assertion follows from Propositions \ref{prop:composition} and
\ref{prop:equiv}.
\end{proof}

\section{Ring isogenies}\label{sec:ringisog}\noindent
In the present section we prove that an isogeny of rings induces
an isogeny of multiplicative groups.

We begin by recalling some standard ring theoretic
facts, which can be found in \cite{Lam}.

\begin{definition}
The \emph{Jacobson radical} of a ring $E$, denoted by $\J(E)$, is the 
intersection of the maximal left ideals of $R$.
\end{definition}

\begin{proposition}
Let $E$ be a ring, and $y\in E$. Then the following are equivalent:
\begin{enumerate}[leftmargin=*, label={\upshape(\alph*)}]
\item $y\in \J(E)$;
\item $y$ is contained in every maximal right ideal of $E$;
\item $y$ annihilates every simple left $E$-module;
\item $y$ annihilates every simple right $E$-module;
\item $1-xyz\in E^\times$ for all $x$, $z\in E$.
\end{enumerate}
\end{proposition}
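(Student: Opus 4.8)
The plan is to prove the cycle of implications (a) $\Rightarrow$ (c) $\Rightarrow$ (e) $\Rightarrow$ (a), working entirely with left ideals and left modules, and then to recover the right-sided conditions (b) and (d) by applying the resulting equivalence to the opposite ring $E^{\mathrm{op}}$. For (a) $\Rightarrow$ (c): given a simple left $E$-module $M$ and $0\ne m\in M$, the annihilator $\mathrm{Ann}(m)$ is a maximal left ideal, since $E/\mathrm{Ann}(m)\cong Em=M$ is simple; hence it contains $\J(E)\ni y$, so $ym=0$, and as this holds for every $m$ we get $yM=0$. For the return implication (e) $\Rightarrow$ (a): if $y$ lies outside some maximal left ideal $\mathfrak m$, then $\mathfrak m+Ey=E$ by maximality, so $1=m+xy$ with $m\in\mathfrak m$ and $x\in E$; taking $z=1$ in (e), $m=1-xy\in E^\times$, which is impossible since $m$ lies in the proper left ideal $\mathfrak m$.

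The one substantive step is (c) $\Rightarrow$ (e). I would first observe that if $y$ annihilates every simple left $E$-module, then so does $xyz$ for all $x,z\in E$, because $(xyz)M\subseteq(xy)M=x(yM)=0$. So it suffices to show: if $w$ annihilates every simple left $E$-module, then $1-w\in E^\times$; one then applies this with $w=xyz$. If $1-w$ had no left inverse, $E(1-w)$ would be contained in a maximal left ideal $\mathfrak m$; but $w\in\mathfrak m$ (since $w$ kills $E/\mathfrak m$), forcing $1=(1-w)+w\in\mathfrak m$, a contradiction. Hence $u(1-w)=1$ for some $u$, so $u=1+uw$, and $uw$ again annihilates every simple left module, whence the same argument produces a left inverse $v$ of $u$. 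Then $v=v\bigl(u(1-w)\bigr)=1-w$, so $1-w$ is a two-sided inverse of $u$, i.e.\ $1-w\in E^\times$. The main obstacle is exactly this bootstrapping from a one-sided inverse to a two-sided one; the rest is routine ideal-theoretic bookkeeping.

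Finally, condition (e) for $y$ in $E$ is literally the same statement as condition (e) for $y$ in $E^{\mathrm{op}}$: the set of units $E^\times$ is unchanged, and the set $\{xyz:x,z\in E\}$ is carried to itself when the multiplication is reversed. Since the maximal left ideals (respectively the simple left modules) of $E^{\mathrm{op}}$ are precisely the maximal right ideals (respectively the simple right modules) of $E$, applying the already-established equivalence $\text{(a)}\iff\text{(c)}\iff\text{(e)}$ to the ring $E^{\mathrm{op}}$ turns (a) into (b) and (c) into (d). This closes the equivalence of all five conditions.
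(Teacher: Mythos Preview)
Your argument is correct. The cycle (a) $\Rightarrow$ (c) $\Rightarrow$ (e) $\Rightarrow$ (a) is clean, the bootstrap from a left inverse of $1-w$ to a two-sided inverse is handled properly, and the passage to $E^{\mathrm{op}}$ to pick up (b) and (d) is justified by the self-duality of condition (e), which you verify explicitly.

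As for comparison: the paper does not actually prove this proposition; it simply cites \S4 of Lam's \emph{A First Course in Noncommutative Rings}. Your write-up is essentially the standard textbook argument one finds there (or in any comparable source), so you are supplying what the paper deferred to a reference. There is nothing to flag.
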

\begin{proof}
See \cite[\S 4]{Lam}.
\end{proof}

\begin{lemma}\label{lem:J}
Let $I$ be a two-sided ideal of $E$ with $I\subset \J(E)$. Then
the map $E^\times\rightarrow (E/I)^\times$ is surjective. Moreover,
$u\in E$ is a unit if and only if $u+I$ is a unit in $E/I$.
\end{lemma}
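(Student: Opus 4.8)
The plan is to prove the ``Moreover'' statement first, since the surjectivity of $E^\times\to(E/I)^\times$ follows easily from it together with the fact that units of $E$ map to units of $E/I$. So the core claim is: for $u\in E$, if $u+I$ is a unit in $E/I$, then $u\in E^\times$. (The converse is trivial, as any ring homomorphism carries units to units.)

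For the core claim, suppose $u+I\in(E/I)^\times$, so there is $v\in E$ with $uv-1\in I$ and $vu-1\in I$. Since $I\subset\J(E)$, the element $uv-1$ lies in $\J(E)$, and by the characterisation of the Jacobson radical (taking $x=z=1$ in condition (e) of the preceding proposition, or rather using that $1+j\in E^\times$ for $j\in\J(E)$), the element $uv=1+(uv-1)$ is a unit in $E$; likewise $vu$ is a unit in $E$. From $uv\in E^\times$ I get that $u$ has a right inverse, namely $v(uv)^{-1}$, and from $vu\in E^\times$ I get that $u$ has a left inverse, namely $(vu)^{-1}v$; an element of a ring with both a left and a right inverse is a unit (the two one-sided inverses coincide). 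Hence $u\in E^\times$, proving the ``Moreover'' clause.

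For the first assertion, let $\bar w\in(E/I)^\times$ and lift it to some $w\in E$. Then $w+I=\bar w$ is a unit in $E/I$, so by the clause just proved $w\in E^\times$, and $w$ is a lift of $\bar w$ lying in $E^\times$. Thus $E^\times\to(E/I)^\times$ is surjective.

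There is essentially no obstacle here: the only input beyond elementary ring theory is the standard fact that $1+j$ is a unit for $j\in\J(E)$, which is exactly condition (e) of the quoted proposition specialised to $x=z=1$, combined with the elementary observation that a two-sided inverse exists as soon as both a left inverse and a right inverse do. If one prefers to avoid invoking two-sidedness of $I$ explicitly, one can note that $I\subset\J(E)$ already gives everything needed, since all that is used is membership of $uv-1$ and $vu-1$ in $\J(E)$; the hypothesis that $I$ is a two-sided ideal is only there so that $E/I$ is a ring in the first place.
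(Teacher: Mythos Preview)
Your proof is correct and follows essentially the same route as the paper: both arguments lift an inverse $v+I$ of $u+I$, use $I\subset\J(E)$ to conclude $uv,vu\in 1+\J(E)\subset E^\times$, and then exhibit $v(uv)^{-1}$ and $(vu)^{-1}v$ as right and left inverses of $u$. Your version is only slightly more explicit in spelling out how surjectivity follows from the ``Moreover'' clause.
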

\begin{proof}
Let $u+I$ be a unit in
$E/I$, and let $v+I$ be its inverse. Then we have $uv$,
$vu\in 1+I\subset 1+\J(E)\subset E^\times$, so $u$ has both a right
inverse, namely $v(uv)^{-1}$, and a left inverse, namely
$(vu)^{-1}v$. It follows that $u$ is a unit in~$E$.
\end{proof}\noindent
A ring is called \emph{simple} if it has exactly two two-sided ideals.
A ring $E$ is called \emph{semisimple} if all
short exact sequences of left $E$-modules split.

Any semisimple ring is a finite direct product of simple rings. If $E$ is
a semisimple ring, then the opposite ring $E^{\opp}$ is also semisimple.
A left-artinian ring is semisimple if and only if its Jacobson radical is 0.
If $E$ is an arbitrary ring, then $\J(E/\J(E))=0$. In particular, if $E$
is left-artinian, then $E/\J(E)$ is semisimple. All these facts can
be found in \cite[\S 3 and \S 4]{Lam}.

The next lemma is also proved as \cite[Lemma 2.6]{NormalBasis}. We
give an alternative proof.
\begin{lemma}\label{lem:K}
Let $E$ and $F$ be rings, let $E\to F$ be a surjective ring
homomorphism, and suppose that $E$ is left-artinian. Then the
induced group homomorphism $E^\times\to F^\times$ is surjective.
\end{lemma}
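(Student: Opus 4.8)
The plan is to reduce the general surjective ring homomorphism $E\to F$, with $E$ left-artinian, to the case where the source is semisimple, by factoring out the Jacobson radical. First I would recall that $F$ is also left-artinian, being a quotient of the left-artinian ring $E$, so $\J(F)$ is a nilpotent two-sided ideal. Let $\pi\colon E\to F$ be the given surjection. I would consider the composite $E\xrightarrow{\pi}F\to F/\J(F)$; since $F/\J(F)$ is semisimple, if I can show that the preimage of $\J(F)$ in $E$ contains $\J(E)$, then I get an induced surjection $E/\J(E)\to F/\J(F)$ of semisimple rings. In fact $\pi(\J(E))\subseteq\J(F)$ always holds for surjective $\pi$ (the Jacobson radical of a quotient is the image of the radical, which can be checked via the characterisation that $\J$ consists of elements $y$ with $1-xyz$ invertible for all $x,z$, together with Lemma~\ref{lem:J}), so $\J(E)$ is contained in the preimage of $\J(F)$.

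Next I would set up the square of ring homomorphisms relating $E$, $F$, $E/\J(E)$, $F/\J(F)$, and pass to unit groups. By Lemma~\ref{lem:J} applied to the ideal $\J(E)\subseteq\J(E)$ in $E$, the map $E^\times\to(E/\J(E))^\times$ is surjective; likewise $F^\times\to(F/\J(F))^\times$ is surjective. So, chasing the commutative square of unit-group maps, to prove surjectivity of $E^\times\to F^\times$ it suffices to prove surjectivity of $(E/\J(E))^\times\to(F/\J(F))^\times$ combined with the surjectivity of $E^\times\to(E/\J(E))^\times$: given $v\in F^\times$, lift its image in $(F/\J(F))^\times$ to some $\bar u\in(E/\J(E))^\times$, lift $\bar u$ to $u\in E^\times$; then $\pi(u)$ and $v$ have the same image in $F/\J(F)$, so $\pi(u)^{-1}v\in 1+\J(F)\subseteq F^\times$, and $1+\J(F)$ is itself in the image of $E^\times\to F^\times$ because $\J(E)$ maps onto (a subset of) $\J(F)$ and by Lemma~\ref{lem:J} again units congruent to $1$ mod $\J(F)$ lift. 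Actually the cleanest packaging is: it suffices to handle the semisimple case, because the two vertical maps are surjective by Lemma~\ref{lem:J} and the bottom map being surjective then forces the top map to be surjective by a direct diagram chase.

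It therefore remains to prove the lemma when $E$ and $F$ are semisimple. Here I would use the structure theory: a semisimple ring is a finite product of simple artinian rings, i.e. matrix rings over division rings, and a surjection of semisimple rings corresponds, after discarding factors, to a product of surjections between individual simple factors, each of which — since a simple ring has no nontrivial two-sided ideals — must be an isomorphism. So the bottom map $(E/\J(E))^\times\to(F/\J(F))^\times$ is a surjection of products of unit groups of matrix rings, and surjectivity is immediate. Alternatively, and perhaps more in keeping with the rest of the section, one can argue that for a surjection $B\to C$ of semisimple rings the kernel is a two-sided ideal, hence a product of matrix-ring factors of $B$, so $C$ is (isomorphic to) a product of a subset of the simple factors of $B$ and the map on units is the corresponding projection, which is visibly onto.

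The main obstacle is the reduction to the semisimple case: one must be careful that $\pi(\J(E))\subseteq\J(F)$ and that units lift along $E^\times\to F^\times$ modulo the radical, and to assemble these into surjectivity without circularity. This is exactly where Lemma~\ref{lem:J} and the standard facts about $\J$ of left-artinian rings (nilpotence of the radical, semisimplicity of $E/\J(E)$) do the work; once past that, the semisimple case is essentially formal.
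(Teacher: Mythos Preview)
Your approach is essentially the paper's: reduce to the semisimple case by passing to $E/\J(E)$, handle that case via the Wedderburn decomposition (kernel is a subproduct, $F$ is the complementary subproduct), and then correct the lift by an element of $1+\J(E)$. Two small points are worth tightening. First, your ``cleanest packaging'' is not valid as stated: in a commutative square of groups, surjectivity of both verticals and the bottom map does \emph{not} force surjectivity of the top map, so you really do need the explicit chase you wrote out just before. Second, that chase requires $\pi(\J(E))=\J(F)$ (not merely $\subseteq$) so that $\pi(u)^{-1}v\in 1+\J(F)$ is the image of some $1+y$ with $y\in\J(E)$; this equality holds here because $F/\pi(\J(E))$ is a quotient of the semisimple ring $E/\J(E)$ and hence semisimple, but your parenthetical justification only gives the inclusion. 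The paper avoids this wrinkle by setting $I=\pi(\J(E))$ and working with $F/I$ rather than $F/\J(F)$, so that $1+I$ is \emph{by definition} the image of $1+\J(E)$.
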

\begin{proof}
First, suppose that $E$ is semisimple. Then
$E$ can be written as the product of finitely many simple rings.
Since the kernel of $E\to F$ is a two-sided ideal of $E$, it must
be a subproduct, and $F$ may then be identified with the complementary
subproduct. Surjectivity of $E^\times\to F^\times$ is now obvious.

We pass to the general case. Denote by $I$ the image of $\J(E)$ in
$F$, which is a two-sided ideal of~$F$. The map $E\to F$ induces a
surjective ring homomorphism $E/\J(E)\to F/I$, where the ring
$E/\J(E)$ is semisimple, so by the first part of the proof the
induced map $(E/\J(E))^\times\to(F/I)^\times$ is surjective. By Lemma \ref{lem:J},
the map $E^\times\to(E/\J(E))^\times$ is also surjective, so the map $E^\times\to(F/I)^\times$
induced by the composed ring homomorphism $E\to F/I$ is surjective
as well. Now let $v\in F^\times$, and choose $u\in E^\times$ that maps to
$v+I\in(F/I)^\times$. Then the image $w$ of $u$ in $F$ satisfies
$w\equiv v\bmod I$, so $w^{-1}v$ belongs to the image $1+I$ of
$1+\J(E)$ in $F$. Let $x\in1+\J(E)$ map to $w^{-1}v$. Then $ux$
maps to $ww^{-1}v=v$, and we have $ux\in E^\times$ because $u\in E^\times$
and $x\in1+\J(E)\subset E^\times$. This proves surjectivity of
$E^\times\to F^\times$, as required.
\end{proof}\noindent

\begin{lemma}\label{lem:L}
Let $E$ be a ring, and let $I$, $J\subset E$ be two-sided ideals.
Then the kernel of the natural ring homomorphism 
$E\to(E/I)\times(E/J)$ equals $I\cap J$, and its image is the
fibre product $E/I\times_{E/(I+J)}E/J$.
\end{lemma}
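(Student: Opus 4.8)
The plan is to verify both assertions by directly unwinding the definitions; the statement is essentially a ring-theoretic form of the Chinese Remainder Theorem. Write $\phi\colon E\to(E/I)\times(E/J)$ for the natural map $x\mapsto(x+I,x+J)$.

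First I would dispose of the kernel. An element $x\in E$ lies in $\ker\phi$ exactly when $x+I$ and $x+J$ both vanish, i.e.\ when $x\in I$ and $x\in J$, which is precisely the condition $x\in I\cap J$. So $\ker\phi=I\cap J$, and no more is needed for the first claim.

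Next I would pin down the fibre product $P=(E/I)\times_{E/(I+J)}(E/J)$ concretely. It is formed over the canonical surjections $E/I\to E/(I+J)$ and $E/J\to E/(I+J)$, and since the forgetful functor to $\Grp$ preserves fibre products while the fibre product of groups is the subgroup of compatible pairs (as recalled at the start of Section \ref{sec:2}), $P$ is the subring $\{(a+I,\,b+J)\in(E/I)\times(E/J): a-b\in I+J\}$. The inclusion of the image of $\phi$ in $P$ is immediate, since for $x\in E$ the two components $x+I$ and $x+J$ have the same image $x+(I+J)$ in $E/(I+J)$. For the reverse inclusion I would, given $(a+I,b+J)\in P$, write $a-b=i+j$ with $i\in I$ and $j\in J$ and set $x=a-i=b+j$; then $x\equiv a\bmod I$ and $x\equiv b\bmod J$, so $\phi(x)=(a+I,b+J)$, showing $P$ is contained in the image of $\phi$.

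I do not expect any real obstacle: everything is a direct computation, and the only step requiring a moment's care is identifying the abstract fibre product with the ``compatible pairs'' subring, which the framework of Section \ref{sec:2} already provides.
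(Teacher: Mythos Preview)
Your proof is correct and is precisely the straightforward verification the paper has in mind; indeed, the paper simply declares the proof to be left to the reader. There is nothing to add or compare.
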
\noindent
The proof is straightforward, and is left to the reader.

\begin{lemma}\label{lem:M}
Let $E\to F$ be a surjective ring isogeny. Then the induced
group homomorphism $E^\times\to F^\times$ is surjective.
\end{lemma}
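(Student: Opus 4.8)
The plan is to reduce at once to the following statement: for every finite two-sided ideal $I$ of a ring $E$, the homomorphism $E^\times\to(E/I)^\times$ is surjective. This suffices, because an isogeny of abelian groups has finite kernel, so $I:=\ker(E\to F)$ is a finite two-sided ideal of $E$ and $F$ is identified with $E/I$.

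The key device is to introduce the set $J=\{x\in E:xI=0\}$. Since $I$ is two-sided, so is $J$; and left multiplication defines a ring homomorphism $E\to\End_{\Z}(I)$ with kernel $J$, so $E/J$ embeds into the finite ring $\End_{\Z}(I)$ and is therefore a finite, in particular left-artinian, ring. Moreover the two-sided ideal $I_0:=I\cap J$ satisfies $I_0I_0\subseteq JI=0$; hence for $y\in I_0$ and any $x,z\in E$ the element $1-xyz$ is a unit (with inverse $1+xyz$, as $(xyz)^2\in I_0I_0=0$), and by the characterisation of $\J(E)$ recalled above this gives $I_0\subseteq\J(E)$.

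It remains to assemble the pieces. By Lemma \ref{lem:L}, the natural map $E\to E/I\times E/J$ identifies $E/I_0$ with the fibre-product ring $P:=E/I\times_{E/(I+J)}E/J$ formed along the natural surjections. Given $v\in(E/I)^\times$, write $v_1\in(E/(I+J))^\times$ for its image; since $E/J$ is left-artinian and surjects onto $E/(I+J)$, Lemma \ref{lem:K} yields $v_2\in(E/J)^\times$ mapping to $v_1$. Then $(v,v_2)\in P$, and since its coordinates are units lying over the common unit $v_1$ one also has $(v^{-1},v_2^{-1})\in P$, so $(v,v_2)\in P^\times$. Transporting through the isomorphism $E/I_0\cong P$ gives a unit of $E/I_0$, which by Lemma \ref{lem:J} (using $I_0\subseteq\J(E)$) lifts to some $u\in E^\times$; tracing $u$ through $E/I_0\cong P\subseteq E/I\times E/J$ shows that its image in $E/I=F$ is $v$. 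Hence $E^\times\to F^\times$ is surjective.

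The only step that is not bookkeeping is the choice of $J$: one needs a two-sided ideal whose quotient $E/J$ is artinian, so that Lemma \ref{lem:K} applies, while keeping $I\cap J$ nilpotent, so that Lemma \ref{lem:J} applies — and the left annihilator of $I$ achieves both simultaneously. The remaining ingredients are the fibre-product description of Lemma \ref{lem:L} and the elementary observation that in a fibre product of rings a pair of units lying over a common image is again a unit.
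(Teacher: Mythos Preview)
Your proof is correct and follows essentially the same approach as the paper's: both take $J$ to be the left annihilator of $I$ (equivalently, the kernel of left multiplication $E\to\End_\Z I$), use Lemma~\ref{lem:L} to identify $E/(I\cap J)$ with the fibre product $E/I\times_{E/(I+J)}E/J$, apply Lemma~\ref{lem:K} to the finite ring $E/J$ to lift the image of $v$ to a unit, and then invoke $I\cap J\subseteq\J(E)$ together with Lemma~\ref{lem:J} to lift from $E/(I\cap J)$ to $E$. Your write-up is in fact slightly more explicit than the paper's at the fibre-product-unit step and at the Jacobson-radical verification, but the argument is the same.
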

\begin{proof}
Let $I$ be the kernel of the isogeny $E\to F$. Then $I$ is finite,
and we may identify $F$ with $E/I$. Write $\End I$ for the
endomorphism ring of the additive group of $I$. Let $J$ and $R$,
respectively, be the kernel and the image of the ring homomorphism
$E\to\End I$ sending $a\in E$ to the map $x\mapsto ax$. Then $R$,
being a subring of $\End I$, is a finite ring, $J$ is a two-sided
ideal of $E$, and we have a ring isomorphism $E/J\to R$. By Lemma
\ref{lem:L}, the combined map $E\to F\times R$ induces a ring isomorphism
$\varphi\colon E/(I\cap J)\to F\times_{E/(I+J)}R$. Now we first
prove that the map $(E/(I\cap J))^\times\to F^\times$ is surjective. Let
$u\in F^\times$. Write $v$ for the image of $u$ in $(E/(I+J))^\times$. Since
$R$ is finite and hence left-artinian, by Lemma \ref{lem:K} we can
choose $w\in R^\times$ mapping to $v\in(E/(I+J))^\times$. Then $(u,w)$ belongs to
$F^\times\times_{(E/(I+J))^\times}R^\times=(F\times_{E/(I+J)}R)^\times$, so
$\varphi^{-1}(u,w)$ is a unit of $E/(I\cap J)$ that maps to
$u\in F^\times$. This proves that $(E/(I\cap J))^\times\to F^\times$ is surjective.
{}From $(I\cap J){\cdot}(I\cap J)\subset JI=0$ it follows that for
each $x\in I\cap J$ the element $1+x$ has inverse $1-x$ and therefore
belongs to~$E^\times$; this implies $I\cap J\subset\J(E)$, so by
Lemma \ref{lem:J} the map $E^\times\to(E/(I\cap J))^\times$ is surjective. The
composed map $E^\times\to F^\times$ is then surjective as well.
\end{proof}\noindent
Part (a) of the following lemma also appears as \cite[Lemma 1]{Lewin}. We
give a new proof here.
\begin{lemma}\label{lem:N}
Let $E$ be a subring of a ring $F$ such that the index $(F:E)$ of
additive groups is finite. Then:
\begin{enumerate}[leftmargin=*, label={\upshape(\alph*)}]
\item the ring $F$ has a two-sided ideal $I$ with
$I\subset E$ for which the ring $F/I$ is finite;
\item the index $(F^\times:E^\times)$ is finite.
\end{enumerate}
\end{lemma}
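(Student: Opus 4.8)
The plan is to prove part (a) first and then derive part (b) from it together with Lemma \ref{lem:M}. For part (a), the natural candidate for $I$ is the largest two-sided ideal of $F$ contained in $E$; concretely, set $I=\{x\in F : FxF\subseteq E\}$, or equivalently the sum of all two-sided ideals of $F$ that lie inside $E$. One checks directly that $I$ is a two-sided ideal of $F$ and $I\subseteq E$. The point is to show $F/I$ is finite. Since $(F:E)$ is finite as additive groups, $E$ has finite index in $F$, so $F/E$ is a finite abelian group; let $n=\#(F/E)$, so that $nF\subseteq E$. Now $nF$ is itself a two-sided ideal of $F$ contained in $E$, hence $nF\subseteq I$, so $F/I$ is a quotient of $F/nF$. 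It therefore suffices to argue that $F/nF$ is finite. This is where a little care is needed: $F$ need not be finitely generated as an abelian group, so $F/nF$ is not obviously finite. I would instead argue as follows: for any $a\in F$, consider the additive map $F\to F/E$, $x\mapsto ax+E$; its kernel has finite index in $F$ (index dividing $n$), and its kernel is a left ideal; intersecting over the finitely many cosets... rather, the cleanest route is: the additive group $F/E$ is finite, so there are only finitely many additive subgroups of $F$ containing $E$; in particular the two-sided ideal $nF+E$...

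Let me restructure: the main obstacle is precisely that $F$ may be infinitely generated over $\Z$, so finiteness of $F/I$ must be extracted purely from $(F:E)<\infty$. The trick is to pass to the \emph{left} annihilator of $F/E$ as an $F$-$F$-bimodule. Put $n=\#(F/E)$. The left multiplication action of $F$ on the finite abelian group $F/E$ gives a ring homomorphism $F\to\End(F/E)$ (endomorphisms of the finite abelian group $F/E$), whose target is a finite ring; let $J_\ell$ be its kernel, a two-sided ideal of $F$ of finite index. Similarly right multiplication gives $J_r$ of finite index, and $J=J_\ell\cap J_r$ is a two-sided ideal of $F$ of finite index. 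For $x\in J$ one has $Fx\subseteq E$ and $xF\subseteq E$; moreover $FxF\subseteq E$ requires one more step, so instead take $J'=\{x\in J : FxF\subseteq E\}$, or simply note $J^2\subseteq E$: indeed for $x,y\in J$, $xy$ kills $F/E$ on the left so $Fxy\subseteq\,$? — the honest fix is to set $I=J_\ell\cap J_r\cap E$ enlarged to a two-sided ideal; but the quickest correct statement is: $I:=\{x\in F: Fx F\subseteq E\}$ contains $nF$ (since $n F\cdot F\cdot F\subseteq nF\subseteq E$ as $nF$ is an ideal inside $E$), and $F/I$ embeds in $\End(F/nF\text{ as a bimodule over } F/nF)$... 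Honestly the slickest is: $I=\mathrm{ann}$ of the finite bimodule ${}_F(F/E)_F$, then $F/I$ injects into $\mathrm{End}_{\Z}(F/E)^{\,\mathrm{op}}\times\mathrm{End}_\Z(F/E)$, which is finite, and $I\subseteq E$ because $1\cdot I\cdot 1\subseteq E$ forces $I\subseteq E$. That settles (a).

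For part (b): by part (a) we have a two-sided ideal $I$ of $F$ with $I\subseteq E$ and $F/I$ finite. Then $I$ is also a two-sided ideal of $E$, and $E/I\subseteq F/I$ is a subring of a finite ring, hence finite; in particular $I$ has finite index in both $E$ and in $F$. The inclusion $E\hookrightarrow F$ is then a ring isogeny (finite kernel, namely $0$; finite cokernel since $(F:E)<\infty$). The reduction $E\to E/I$ is a surjective ring isogeny, so by Lemma \ref{lem:M} the map $E^\times\to(E/I)^\times$ is surjective; likewise $F\to F/I$ is a surjective ring isogeny and $F^\times\to(F/I)^\times$ is surjective. Now $(F/I)^\times$ is finite, so $(F/I)^\times/(E/I)^\times$ is finite; pulling back along the two surjections above, together with the fact that the kernels of $E^\times\to(E/I)^\times$ and $F^\times\to(F/I)^\times$ are the finite groups $1+I$, we conclude $(F^\times:E^\times)\mid \#(1+I)\cdot\big((F/I)^\times:(E/I)^\times\big)<\infty$. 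Equivalently: $\#\ker\rho$ and the cokernels are finite, so the composite $E^\times\to F^\times\to (F/I)^\times$ and the diagram-chase of Proposition \ref{prop:isog} give finiteness of $(F^\times:E^\times)$ directly. The only genuinely delicate point in the whole argument is the finiteness claim in part (a) in the absence of any finite-generation hypothesis, which the annihilator-of-a-finite-bimodule device resolves.
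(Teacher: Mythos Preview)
Your overall strategy for (a) --- take $I=\{x\in F:FxF\subseteq E\}$ and prove $F/I$ is finite by realising $F/I$ inside endomorphisms of the finite group $F/E$ --- matches the paper, but the execution has a real gap. The quotient $F/E$ is \emph{not} an $F$-$F$-bimodule: for $f\in F$ and $e\in E$ there is no reason for $fe\in E$, so left multiplication by $F$ does not descend to $F/E$. Thus neither your ``ring homomorphism $F\to\End(F/E)$'' nor your ``finite bimodule ${}_F(F/E)_F$'' makes sense. What does make sense is the $E$-$E$-bimodule structure on $D:=F/E$. The paper uses exactly this: the map $E\to(\End D)\times(\End D)^{\opp}$ has kernel $J=\{x\in F:Fx\subseteq E,\ xF\subseteq E\}$, so $E/J$ is finite; one then needs a second step, the homomorphism $J\to\Hom(D\otimes_{\Z}D,D)$, $x\mapsto\big((y{+}E)\otimes(z{+}E)\mapsto yxz{+}E\big)$, whose kernel is $I$, to conclude that $J/I$ is finite. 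Your write-up skips this second step (and the first step is misattributed to $F$ rather than $E$).

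Your argument for (b) contains an outright error: the group $1+I$ is almost never finite (take $E=F=\Z$, $I=2\Z$). So the bound $(F^\times:E^\times)\mid\#(1+I)\cdot\big((F/I)^\times:(E/I)^\times\big)$ is meaningless, and the appeal to Proposition~\ref{prop:isog} fails for the same reason: $F^\times\to(F/I)^\times$ is not an isogeny. The correct (and simpler) argument, which needs neither Lemma~\ref{lem:M} nor Proposition~\ref{prop:isog}, is to observe that the kernel $K$ of $F^\times\to(F/I)^\times$ has finite index in $F^\times$ because $(F/I)^\times$ is finite, and that $K\subseteq E^\times$: if $x\in K$ then $x\in 1+I\subseteq E$ and likewise $x^{-1}\in 1+I\subseteq E$, so $x\in E^\times$. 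Hence $(F^\times:E^\times)\le(F^\times:K)<\infty$.
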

\begin{proof}
(a)
Put $I=\{x\in F:FxF\subset E\}$. Then $I$ is a
two-sided ideal of $F$ that is contained in~$E$, and we proceed to
show that $I$ has finite index in~$F$. Put $J=\{x\in F:Fx\subset E$
and $xF\subset E\}$. Then we have $I\subset J\subset E\subset F$.
Denote by $D$ the finite abelian group $F/E$, by $\End D$ its
endomorphism ring, and by $(\End D)^{\opp}$ the ring opposite to
$\End D$. Both of these rings are finite. The natural left and right
$E$-module structures on $D$ induce a ring homomorphism
$E\to(\End D)\times(\End D)^{\opp}$ of which $J$ is the kernel. It
follows that $J$ is a two-sided ideal of $E$ of finite index in~$E$.
There is a well-defined group homomorphism
\beq
J & \to & \Hom(D\otimes_\Z D,D)\\
x & \mapsto & ((y+E)\otimes(z+E)\mapsto yxz+E)
\eeq
for $x\in J$, $y,z\in F$.
Its kernel is
$I$, and since $D\otimes_\Z D$ is finite, the group $J/I$ is
finite. Because each of $F/E$, $E/J$, $J/I$ is finite, the ring
$F/I$ is finite. This proves (a).

(b) Let $I$ be as in (a). Then $F/I$ and $(F/I)^\times$ are finite,
so the kernel $K$ (say) of the natural group homomorphism
$F^\times\to(F/I)^\times$ has finite index in $F^\times$. If $x\in K$, then
$x^{-1}\in K$, so both $x$ and $x^{-1}$ are in $1+I$, which is
contained in~$E$. This proves $K\subset E^\times$, so $E^\times$ has finite
index in $F^\times$ as well. This proves (b).
\end{proof}\noindent
We can now prove Theorem \ref{thm:unitsintro} of the introduction, which reads as
follows.

\begin{theorem}\label{thm:O}
Let $E\to F$ be a ring isogeny. Then the induced group
homomorphism $E^\times\to F^\times$ is a group isogeny. If in addition
the map $E\to F$ is surjective, then so is the induced map
$E^\times\to F^\times$.
\end{theorem}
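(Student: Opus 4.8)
The plan is to reduce everything to the lemmas already established, by factoring the given isogeny through its image. Write $\varphi\colon E\to F$ for the given ring isogeny and let $E'=\varphi E$, a subring of $F$. Then $\varphi$ factors as the surjective ring homomorphism $E\to E'$ followed by the inclusion $E'\hookrightarrow F$, and both of these are themselves ring isogenies: the first because $\ker\varphi$ is finite and it is onto $E'$, the second because $(F:E')=(F:\varphi E)$ is finite. Since, by Proposition \ref{prop:isog}, the composition of two group isogenies is a group isogeny, it suffices to handle these two factors separately and then compose.

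For the factor $E\to E'$: by Lemma \ref{lem:M} the induced map $E^\times\to (E')^\times$ is surjective, and its kernel is $E^\times\cap(1+\ker\varphi)$, which is contained in the finite set $1+\ker\varphi$, hence finite; so $E^\times\to(E')^\times$ is a (surjective) group isogeny. For the factor $E'\hookrightarrow F$: the induced map $(E')^\times\to F^\times$ is injective, and by Lemma \ref{lem:N}(b) its image has finite index in $F^\times$, so it too is a group isogeny. Composing the two, $E^\times\to F^\times$ is a group isogeny, which proves the first assertion. The second assertion — that if $E\to F$ is in addition surjective, then so is $E^\times\to F^\times$ — is exactly the content of Lemma \ref{lem:M}, so nothing remains to be done there.

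I expect essentially no obstacle at this stage: the substantive work has all been front-loaded into the preceding lemmas, namely the surjectivity statement (Lemma \ref{lem:M}, which rests on Lemmas \ref{lem:K} and \ref{lem:L} and on artinian ring theory) and the finiteness of $(F^\times:E^\times)$ for a subring of finite index (Lemma \ref{lem:N}). Given these, the theorem is a bookkeeping exercise in composing isogenies. The one point worth keeping in mind is that one must \emph{not} try to prove surjectivity of $E^\times\to F^\times$ in the non-surjective case, since it can genuinely fail (e.g.\ for $\Z[2i]\subset\Z[i]$); the factorisation above is designed precisely to isolate the ``finite kernel'' and ``finite cokernel'' phenomena from one another rather than forcing surjectivity throughout.
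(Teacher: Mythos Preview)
Your proof is correct and follows the same approach as the paper: factor through the image $E'=\varphi E$ (the paper calls it $D$), use Lemma \ref{lem:M} to show $E^\times\to(E')^\times$ is a surjective isogeny, use Lemma \ref{lem:N}(b) to show $(E')^\times\to F^\times$ is an injective isogeny, and then compose via Proposition \ref{prop:isog}. The paper's argument and yours are essentially identical, including the observation that the surjective case is exactly Lemma \ref{lem:M}.
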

\begin{proof}
The last assertion is Lemma \ref{lem:M}. For the first
assertion, let $I$ and $D$ be the kernel, respectively the image of the
map $E\to F$. Then the kernel of $E^\times\to D^\times$ is contained in
$1+I$ and therefore finite, and by Lemma \ref{lem:M} the image is all of
$D^\times$. Hence $E^\times\to D^\times$ is a group isogeny. Further, the
inclusion map $D^\times\to F^\times$ is obviously injective, while by
Lemma \ref{lem:N}(b) the index of $D^\times$ in $F^\times$ is finite. Hence
$D^\times\to F^\times$ is a group isogeny. By Proposition \ref{prop:isog},
the composed map $E^\times\to F^\times$ is also a group isogeny.
\end{proof}

\section{Residually finite domains}\label{sec:semisimple}\noindent
This section is devoted to some properties of infinite domains all of
whose proper quotients are finite.
\begin{lemma}\label{lem:P}
Let $Z$ be a domain such that for all non-zero $m\in Z$
the ring $Z/mZ$ is finite, let $Q$ be the field of fractions of $Z$,
let $V$ be a finite dimensional $Q$-vector space, and let $L$
be a sub-$Z$-module of~$V$. Then for all non-zero $m\in Z$ the
$Z$-module $L/mL$ is finite of order dividing $\#(Z/mZ)^{\dim_QV}$,
with equality if $L$ is finitely generated and $Q{\cdot}L=V$.
\end{lemma}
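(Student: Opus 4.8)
The plan is to reduce to the case where $L$ is a finitely generated submodule of $V$, then sandwich $L$ between two free $Z$-modules and use the snake lemma.

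First I would handle the inequality in the general case. Since $L \subseteq V$ and $\dim_Q V = n$ (say), the submodule $L$ is torsion-free, and any finitely many elements of $L$ span a $Q$-subspace of $V$ of dimension at most $n$. Given a nonzero $m \in Z$, the quotient $L/mL$ is generated, as a $Z/mZ$-module, by the images of any $Z$-generating set of $L$; but more to the point, any finite subset of $L/mL$ lifts to a finitely generated submodule $L' \subseteq L$, and the composite $L'/mL' \to L/mL$ has image containing that subset. So it suffices to bound $\#(L'/mL')$ uniformly over finitely generated $L' \subseteq L$, and then $L/mL$ will be finite of order dividing $\#(Z/mZ)^n$ by taking the supremum. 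For a finitely generated torsion-free module over a domain sitting inside $V$, I would choose a $Q$-basis of the span $V' = Q \cdot L'$ consisting of elements of $L'$; this exhibits a free $Z$-module $F = Z^r \subseteq L'$ with $r = \dim_Q V' \le n$ and $(L' : F)$ finite (clear denominators: some nonzero $m_0 \in Z$ has $m_0 L' \subseteq F$). Then the snake lemma applied to multiplication by $m$ on $0 \to F \to L' \to L'/F \to 0$ gives an exact sequence
$$
(L'/F)[m] \to F/mF \to L'/mL' \to (L'/F)/m(L'/F) \to 0,
$$
and since $L'/F$ is a finite group, $(L'/F)[m]$ and $(L'/F)/m(L'/F)$ have the same (finite) order, so $\#(L'/mL') = \#(F/mF) = \#(Z/mZ)^r$ divides $\#(Z/mZ)^n$. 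This proves finiteness and the divisibility bound in general.

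For the equality statement, assume $L$ is finitely generated with $Q \cdot L = V$. Then in the argument above I may take $L' = L$, so $r = n$, and I get a free submodule $F = Z^n \subseteq L$ of finite index. The snake lemma computation already gives
$$
\#(L/mL) \cdot \#\big((L/F)[m]\big) = \#(Z/mZ)^n \cdot \#\big((L/F)/m(L/F)\big),
$$
and the two correction factors involving the finite group $L/F$ are equal, so $\#(L/mL) = \#(Z/mZ)^n$ exactly. Alternatively, and perhaps more cleanly, I would also choose a free module $F'$ with $L \subseteq F' \subseteq V$ of finite index (again clearing denominators of a generating set of $L$ against a basis of $V$ inside $L$), and run the snake lemma on $0 \to L \to F' \to F'/L \to 0$; comparing with $F \subseteq L \subseteq F'$ and using that $F/mF$ and $F'/mF'$ both have order $\#(Z/mZ)^n$ pins down $\#(L/mL)$.

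The main obstacle is purely bookkeeping: making the reduction to finitely generated $L$ rigorous (the direct limit argument, and checking that a uniform bound over finitely generated submodules really yields finiteness of $L/mL$), and being careful that all the index and order computations for the finite abelian group $L/F$ cancel correctly in the snake lemma. There is no deep input needed beyond the hypothesis that $Z/mZ$ is finite for nonzero $m$, which is exactly what makes $F/mF \cong (Z/mZ)^n$ finite and what forces torsion submodules like $(L/F)[m]$ to be finite.
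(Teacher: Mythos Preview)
Your argument is correct and follows essentially the same route as the paper: in the finitely generated case you choose a free submodule $F\subset L'$ of finite index and compare $F/mF$ with $L'/mL'$, then pass to the general case by writing $L$ as a directed union of finitely generated submodules. The only cosmetic difference is that you package the index comparison via the snake lemma (using that $\#(L'/F)[m]=\#\big((L'/F)/m(L'/F)\big)$ for the finite group $L'/F$), whereas the paper does the same cancellation by hand via $\#(L'/F)=\#(mL'/mF)$; note that your parenthetical ``$(L':F)$ finite'' is justified exactly as in the paper, namely $L'/F\cong m_0L'/m_0F\subset F/m_0F\cong(Z/m_0Z)^r$.
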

\begin{proof}
Let $m\in Z$ be non-zero. First suppose that
$L$ is finitely generated. Let $S\subset L$ be a finite subset
that generates it as a $Z$-module, let $T\subset S$ be a maximal
subset that is linearly independent over~$Q$, and let $M\subset L$
be the $Z$-module generated by~$T$. Then $M$ is $Z$-free of rank
$\#T$, so $M/mM$ is finite of order $\#(Z/mZ)^{\#T}\leq
\#(Z/mZ)^{\dim_QV}$, with equality
if $T$ is a $Q$-basis of $V$ or, equivalently, if $Q{\cdot}L=V$. By
maximality of $T$, we can, for each $s\in S$, choose a non-zero
element $m_s\in Z$ such that $m_ss\in M$, and
$m'=\prod_{s\in S}m_s$ is then a non-zero element of $Z$ satisfying
$m'L\subset M$. Because $M/m'M$ is finite, its subgroup $m'L/m'M$
is finite as well, and since the latter group is isomorphic to
$L/M$ and to $mL/mM$, we find that $L/M$ and $mL/mM$ are finite of
the same order. The group $L/mM$ is finite of order
$\#(L/M){\cdot}\#(M/mM)$, so $L/mL$ is also finite, of order
$$\frac{\#(L/M){\cdot}\#(M/mM)}{\#(mL/mM)}=\#(M/mM)=\#(Z/mZ)^{\#T}\le\#(Z/mZ)^{\dim_QV},$$
with equality if $Q{\cdot}L=V$.

Passing to the general case, let $U$ be the set of finitely
generated sub-$Z$-modules $L'$ of~$L$, which is a directed partially
ordered set by inclusion. Then $L$ is the injective limit of
all $L'\in U$, and $L/mL$ is the injective limit of the modules
$L'/mL'$, all of which have order dividing $\#(Z/mZ)^{\dim_QV}$.
The injective limit has then also order dividing the same number.
This completes the proof of Lemma \ref{lem:P}.
\end{proof}\noindent
We now prove Theorem \ref{thm:noethintro}.

\begin{theorem}\label{thm:Q}
Let $Z$ be a domain such that for all non-zero $m\in Z$ the ring
$Z/mZ$ is finite, let $Q$ be the field of fractions of $Z$, let
$A$ be a semisimple $Q$-algebra of finite vector space dimension
over $Q$, and let $R\subset A$ be a sub-$Z$-algebra with $Q\cdot
R=A$. Then $R$ is left-noetherian and right-noetherian.
\end{theorem}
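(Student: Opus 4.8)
The plan is to reduce the noetherian property of $R$ to that of a well-understood overring. Since $A$ is a finite-dimensional semisimple $Q$-algebra and $R$ is a sub-$Z$-algebra with $Q\cdot R=A$, I would first pick a maximal order $\Lambda$ in $A$ containing $R$ — or, more elementarily, construct a finitely generated $Z$-module $M$ with $R\subset M\subset A$ and $QM=A$, for instance by taking a finite generating set of $A$ as a $Q$-vector space lying in $R$ and clearing denominators, and then enlarging $M$ to an order $\Lambda=\{a\in A: aM\subset M\}$, which is again a ring sandwiched between $R$ and a finitely generated $Z$-module. The key point is that such a $\Lambda$, being a finitely generated $Z$-module over a (not necessarily noetherian!) $Z$, need not itself be noetherian in general; so the real content must come from the residual finiteness hypothesis on $Z$.

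Here is where Lemma \ref{lem:P} does the work. Let $I_1\subset I_2\subset\cdots$ be an ascending chain of left ideals of $R$. Choose a non-zero $m\in Z$ with $m\Lambda\subset R$ (possible since $Q\Lambda=A=QR$ and $\Lambda$ is a finitely generated $Z$-module, while $R$ is a $Z$-submodule with $QR=A$; multiply a finite generating set of $\Lambda$ into $R$). Then each $I_j\subset R\subset \Lambda$, and $\Lambda$ is a sub-$Z$-module of the finite-dimensional $Q$-vector space $A$, so by Lemma \ref{lem:P} the quotient $\Lambda/m\Lambda$ is finite. Since $m\Lambda\subset R$, each $I_j$ contains $mI_j\supset$ nothing automatically — so I instead argue as follows: the chain $I_j+m\Lambda$ stabilises because $\Lambda/m\Lambda$ is finite, and the chain $I_j\cap m\Lambda$ is a chain of $Z$-submodules of $m\Lambda\cong\Lambda$; to control the latter I would intersect with $mR\subset R$ and use that $R/mR$ is finite (again Lemma \ref{lem:P}, applied to the finitely generated — no — to the $Z$-submodule $R$ of $A$). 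Concretely: $R/mR$ is finite, hence the chain of images $\overline{I_j}\subset R/mR$ stabilises; and once $\overline{I_j}$ is constant, the chain $I_j$ differs only inside $mR$, i.e. $I_{j+1}=I_j+(I_{j+1}\cap mR)$, and $I_{j+1}\cap mR = m(I_{j+1}\cap' R)$ for the preimage — iterating this "descent by $m$'' shows any increase in $I_j$ forces an increase detected modulo $m$, which cannot happen infinitely often.

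Let me restate the core argument cleanly, as this is the main step and the place where care is needed. Set $\overline{R}=R/mR$, finite by Lemma \ref{lem:P}. For a left ideal $I\subset R$, define $I^{(0)}=I$ and $I^{(k+1)}=\{r\in R: mr\in I^{(k)}\}$, an increasing sequence of left ideals with $\bigcup_k I^{(k)}$ equal to the saturation $\{r\in R: m^k r\in I\text{ for some }k\}$. Given the chain $I_1\subset I_2\subset\cdots$, the images in the finite ring $\overline{R}$ stabilise, but more is needed: one must show the chain itself stabilises. I expect the clean route is: $\Lambda$ satisfies the ascending chain condition on $Z$-submodules that are $\Lambda$-stable is still false — so instead observe $\Lambda/m^k\Lambda$ is finite for all $k\ge1$ (Lemma \ref{lem:P}), hence $\Lambda$, and a fortiori $R$, has the property that every submodule is the intersection of its images in the finite quotients $\cdot/m^k$, i.e. $\bigcap_k(I+m^k\Lambda)=I$ is NOT automatic; rather, one uses that a finitely generated $R$-module has this separation property. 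I will therefore pass to the completion-style argument: the key lemma to isolate and prove is that $R$ (equivalently $\Lambda$) is \emph{$m$-adically separated} and that each $\Lambda/m^k\Lambda$ is finite, whence an ascending chain of ideals, being eventually constant modulo every $m^k$ and separated, must eventually be constant — the main obstacle is making this last deduction precise without circularly invoking noetherianness, which I would handle by a direct diagonal/pigeonhole argument over the finitely many ideals of each finite quotient $\Lambda/m^k\Lambda$ together with separatedness. Once $R$ is shown left-noetherian, the right-noetherian statement follows by the same proof applied to $R^{\mathrm{opp}}$, $A^{\mathrm{opp}}$ (still a finite-dimensional semisimple $Q$-algebra), which is why the theorem is stated symmetrically.
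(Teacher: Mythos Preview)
Your approach has a genuine gap at the very first step, and a structural omission that would doom it regardless.

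\textbf{The first step fails.} You try to place $R$ inside a finitely generated $Z$-module $\Lambda\subset A$. But no such $\Lambda$ need exist: take $A=Q$ and $R=Z[1/m]$ for a non-unit $m\in Z$. This $R$ satisfies all the hypotheses, yet it is not contained in any finitely generated $Z$-submodule of $Q$. Your recipe ``take a finite $Q$-basis of $A$ inside $R$ and let $M$ be its $Z$-span'' produces a finitely generated $M$ \emph{inside} $R$, not one containing $R$; passing to the left order $\{a\in A:aM\subset M\}$ does not help, since there is no reason $R$ acts on $M$. Maximal orders in $A$ exist, but they need not contain $R$.

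\textbf{Semisimplicity is never used.} Your $m$-adic argument, even if the pieces could be assembled, invokes only Lemma~\ref{lem:P} (finiteness of $R/mR$) and a hoped-for separation property $\bigcap_k m^kR=0$. Neither of these uses the semisimplicity of $A$, yet Example~\ref{ex:notNoeth} shows the conclusion is false without it: for $R=T+\mathrm{J}(A)$ with $\mathrm{J}(A)\neq0$ one still has $R/mR$ finite (since $m\mathrm{J}(A)=\mathrm{J}(A)$), but $R$ is not noetherian. In that example $\bigcap_k m^kR\supset\mathrm{J}(A)\neq0$, so your ``key lemma'' of $m$-adic separation is exactly where semisimplicity must enter --- and you give no indication of how to prove it. The final ``diagonal/pigeonhole'' step is also left as a gesture; even granting separation and finiteness of each $R/m^kR$, it is not clear how to conclude that an ascending chain stabilises without further input.

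\textbf{What the paper does instead.} The paper's argument is short and uses semisimplicity in one clean stroke. Given a left ideal $I\subset R$, the $A$-ideal $Q\cdot I$ is a direct summand of $A$, hence equals $Ae$ for an idempotent $e\in A$. Choosing $m\in Z$ nonzero with $me\in I$, one gets $mRe\subset I\subset Re$; Lemma~\ref{lem:P} makes $Re/mRe$ finite, so $I$ is generated by $me$ together with finitely many coset representatives. No overring, no completion, no separation lemma is needed. The right-noetherian statement then follows by passing to $A^{\mathrm{opp}}$, as you correctly note.
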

\begin{proof}
Let $I$ be a left ideal of~$R$. Then $Q{\cdot}I$ is
a left $A$-ideal, so by semisimplicity of $A$ it is a direct
summand of the left $A$-module~$A$. Thus the endomorphism ring
of the latter module contains an idempotent with image
$Q{\cdot}I$. Since the endomorphisms of the left $A$-module $A$
are the right multiplications by elements of $A$, it is
equivalent to say that we can choose an idempotent $e\in A$ with
$Ae=Q{\cdot}I$. We have $e\in Q{\cdot}I$, so we can choose a
non-zero element $m\in Z$ with $me\in I$. Multiplying the chain
of inclusions $Rme\subset I\subset R$ by $e$ on the right, which
when restricted to $I$ is just the identity map, we obtain
$Rme\subset I\subset Re$, where $Rme=mRe$ because $m$ is
central. By Lemma \ref{lem:P}, the group $Re/mRe$ is finite, so $I/Rme$ is
finite as well. Hence $I$ is, as a left $R$-module, generated by
$me$ together with a finite set, and is therefore finitely
generated. This proves that $R$ is left-noetherian. Applying
this result to $A^{\opp}$ and $R^{\opp}$, we find that $R$ is
right-noetherian as well.
\end{proof}

\begin{example}\label{ex:notNoeth}
If we assume $Z\ne Q$, then the semisimplicity
condition on $A$ is actually necessary for the conclusion of
Theorem \ref{thm:Q} to be valid for all~$R$. To see this, assume that
$A$ is not semisimple, or equivalently that $\J(A)\ne0$, and
choose a sub-$Z$-algebra $T\subset A$ that is finitely
generated as a $Z$-module and satisfies $Q{\cdot}T=A$. Then the
ring $R=T+\J(A)$ is not left-noetherian because $\J(A)$ is
not finitely generated as a left $R$-ideal. If it were, then
the non-zero $Q$-vector space $\J(A)/\J(A)^2$ would be
finitely generated as a $T$-module and hence as a $Z$-module,
which for $Z\ne Q$ is impossible.
\end{example}

\begin{lemma}\label{lem:finquodomain}
Let $Z$ be an infinite commutative ring. Suppose that there exists a faithful
$Z$-module $M$ with the property that for all non-zero $m\in Z$, $M/mM$ is
finite. Then $Z$ is a domain.
\end{lemma}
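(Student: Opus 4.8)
The statement to prove is Lemma~\ref{lem:finquodomain}: if $Z$ is an infinite commutative ring admitting a faithful module $M$ with $M/mM$ finite for every non-zero $m\in Z$, then $Z$ is a domain. Since $Z$ is non-zero (it is infinite), it suffices to show $Z$ has no non-zero zero-divisors. So fix $a\in Z$ with $a\neq 0$; the goal is to show that multiplication by $a$ on $Z$ is injective, equivalently that $\operatorname{Ann}_Z(a)=0$.

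\textbf{Key steps.} The plan is to play the finiteness of $M/aM$ against the faithfulness of $M$ and the infinitude of $Z$. First I would record the inclusions $a^nM\supseteq a^{n+1}M$ and observe that each successive quotient $a^nM/a^{n+1}M$ is a quotient of $M/aM$ (via multiplication by $a^n$), hence finite; more to the point, multiplication by $a$ gives a surjection $M/aM\twoheadrightarrow aM/a^2M$, and iterating, all the $a^nM/a^{n+1}M$ are finite of bounded cardinality. Next, the main idea: consider the map $Z/\operatorname{Ann}_Z(a)\to M$ — no, rather, consider how $\operatorname{Ann}_Z(a)$ acts. Suppose $b\in Z$ with $ab=0$. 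Then $b$ annihilates $aM$, so $b$ acts on $M$ through the quotient $M/(\text{something})$... Let me instead argue directly: the key claim is that $aM$ is again a faithful $Z$-module, or that $\operatorname{Ann}_Z(aM)$ is small. Actually the cleanest route: since $M$ is faithful and $M/aM$ is finite, I would show that $\operatorname{Ann}_Z(aM)$ is finite. Indeed $\operatorname{Ann}_Z(aM)$ embeds into $\operatorname{Hom}_Z(M/aM', \dots)$ — more concretely, an element $c\in\operatorname{Ann}_Z(aM)$ is determined by the induced map $\bar c\colon M\to M/aM$, $x\mapsto cx+aM$ is zero, wait that says $cM\subseteq aM$. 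Hmm. Let me reconsider: if $cM\subseteq aM$ then the map $M\to aM$, $x\mapsto cx$ descends to a map $M/aM\to aM/a^2M$ only after adjusting; this is getting delicate.

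\textbf{The cleaner approach.} I would argue as follows. Fix $a\neq 0$ and suppose $b\in\operatorname{Ann}_Z(a)$. For each non-zero $c\in Z$, the set $\{z\in Z: zM\subseteq cM\}$ — call it $(cM:M)$ — is an ideal, and $Z/(cM:M)$ injects into $\operatorname{Hom}_{\mathrm{Ab}}(M/?, \dots)$; a cleaner bound: the ideal $cM:M$ contains $c$, and the quotient $M/(cM:M)M$... I think the genuinely clean statement is: for each non-zero $c\in Z$, multiplication by $c$ followed by the quotient gives $Z/(0:c) \hookrightarrow \operatorname{Hom}_Z(M,M/cM)$ is false in general. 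So the honest plan is the \emph{pigeonhole} argument: since $Z$ is infinite but $M/aM$ is finite, consider the map $Z\to M/aM$ given by $z\mapsto zx+aM$ for a cleverly chosen $x$; this cannot be injective, but that only gives that $Z$ acts with finite image on each single element. To leverage faithfulness I combine over finitely many elements: faithfulness means $\bigcap_{x\in M}\operatorname{Ann}_Z(x)=0$. The finishing move: I expect to show that if $ab=0$ with $a\neq 0$, then $b$ lies in $\operatorname{Ann}_Z(x)$ for \emph{every} $x\in M$ — namely, for $x\in M$ write (using $M/aM$ finite, so $a$ is "almost surjective") ... pick $y$ with $ay\equiv x$ is impossible in general, but one can pick a positive integer $n$ and use that the descending chain $M\supseteq aM\supseteq a^2M\supseteq\cdots$ has all quotients finite, hence if it stabilises we get $a^nM=a^{n+1}M$, and then $bx$ with $x\in a^nM$ is forced to vanish; stabilisation need not happen, but $M=\varinjlim$ ...

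\textbf{Main obstacle.} The real difficulty — and where I would concentrate the argument — is converting "$M/mM$ finite for all $m\neq 0$" together with "$Z$ infinite" into control over zero-divisors, since a single faithful module can be unwieldy. I expect the resolution to go through the following clean claim, which I would isolate as the heart of the proof: \emph{for every non-zero $m\in Z$, the annihilator $\operatorname{Ann}_Z(m)$ is finite.} Granting this, if $ab=0$ with $a\neq 0$ then $b\in\operatorname{Ann}_Z(a)$, a finite ideal; but then for any $0\neq c\in\operatorname{Ann}_Z(a)$ the map $Z\to M/cM\times\cdots$ — rather, $\operatorname{Ann}_Z(a)\cdot M$ is a $Z$-submodule of $M$ annihilated by $a$, hence a submodule of the "$a$-torsion" of $M$; and one shows the $a$-torsion $M[a]$ embeds into $M/aM$ (via $x\mapsto$ choose $y$ with... no), or simply: $M[a]\supseteq aM[a^2]\supseteq\cdots$ and $M[a]$ being a quotient-controlled piece is finite, so $\operatorname{Ann}_Z(a)$ acts through a finite quotient on a finite module, forcing $\operatorname{Ann}_Z(a)$ finite and then — by taking $b\in\operatorname{Ann}_Z(a)$, $b\neq0$, we also get $\operatorname{Ann}_Z(b)\ni a$ is finite, and now $Z$ maps injectively into $Z/\operatorname{Ann}_Z(b)\times\operatorname{Ann}_Z(b)$ with first factor acting faithfully on $M/bM'$-type finite data — reaching a contradiction with $Z$ infinite. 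I would present the proof in that order: (1) reduce to showing no non-zero zero-divisors; (2) prove $\operatorname{Ann}_Z(m)$ finite for $m\neq0$ by bounding it via the finite group $M/mM$ and faithfulness; (3) derive the contradiction with $\#Z=\infty$. Step (2) is the crux and the step most likely to need a careful, slightly non-obvious argument rather than a routine calculation.
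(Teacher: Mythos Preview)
Your proposal does not reach a complete argument: each attempted approach is abandoned, and in the final three-step plan, step~(2) --- showing $\operatorname{Ann}_Z(m)$ is finite for every nonzero $m$ --- is left unproved. In fact step~(2) is essentially as hard as the lemma itself: the only route I see to bounding $\operatorname{Ann}_Z(a)$ is to take a nonzero $b\in\operatorname{Ann}_Z(a)$, observe that multiplication by $b$ kills $aM$ and hence $bM$ is a quotient of the finite group $M/aM$, and combine this with $M/bM$ finite to conclude that $M$ is finite --- but that already forces $b=0$ (since $Z$ is infinite and embeds in $\operatorname{End}_{\mathrm{Ab}}(M)$ by faithfulness), so one has proved $\operatorname{Ann}_Z(a)=0$ outright, which is the whole lemma. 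There is no genuine intermediate step to isolate.

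The paper's proof is a four-line version of exactly this observation. For nonzero $a,b\in Z$ one writes down the exact sequence
\[
M/bM \xrightarrow{\;a\;} M/abM \longrightarrow M/aM \longrightarrow 0,
\]
notes that the outer terms are finite by hypothesis, hence so is $M/abM$; but $M$ itself is infinite (faithfulness gives $Z\hookrightarrow\operatorname{End}_{\mathrm{Ab}}(M)$), so $ab\neq 0$. You brushed against this when you wrote that ``$b$ annihilates $aM$''; had you followed that thread --- $bM$ is then a quotient of $M/aM$, hence finite, while $M/bM$ is finite by hypothesis, so $M$ is finite, contradiction --- you would have had the proof. No descending chains, pigeonhole arguments, or separate finiteness bounds on annihilators are needed.
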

\begin{proof}
Let $a,b\in Z$ be non-zero. We have an exact sequence of $Z$-modules
$$
M/bM \stackrel{a}{\rar} M/abM \rar M/aM\rar 0.
$$
The left and right terms are finite by assumption, so $M/abM$ is finite.
But since $Z$ is infinite, and $M$ is a faithful module, $M$ is also infinite,
and so $ab\neq 0$.
\end{proof}\noindent
The following result gives a description of the rings $Z$ that
occur in Theorem \ref{thm:mainintro}.
\begin{theorem}\label{thm:R}
Let $\cZ$ be an infinite commutative ring. Then the following
assertions are equivalent:
\begin{enumerate}[leftmargin=*, label={\upshape(\alph*)}]
\item for each non-zero $m\in \cZ$, the ring $\cZ/m\cZ$ is finite;
\item the ring $\cZ$ is a domain, and each non-zero prime
ideal $\fp$ of $\cZ$ is finitely generated as an ideal and has
finite index in~$\cZ$;
\item either $\cZ$ is a field, or it is a one-dimensional
noetherian domain with the property that for every maximal ideal
$\fm$ of $\cZ$ the field $\cZ/\fm$ is finite.
\end{enumerate}
\end{theorem}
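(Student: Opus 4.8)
The plan is to prove Theorem~\ref{thm:R} by establishing the cycle of implications (a)$\Rightarrow$(b)$\Rightarrow$(c)$\Rightarrow$(a). The implication (c)$\Rightarrow$(a) is the easiest: if $\cZ$ is a field there is nothing to prove, and if $\cZ$ is a one-dimensional noetherian domain with finite residue fields at maximal ideals, then for non-zero $m$ the ring $\cZ/m\cZ$ is noetherian of dimension $0$, hence artinian, hence a finite product of local artinian rings each of which has a finite residue field and is finite-length over itself, so $\cZ/m\cZ$ is finite.

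For (a)$\Rightarrow$(b), first I would show $\cZ$ is a domain: take $M=\cZ$, which is a faithful $\cZ$-module with $M/mM=\cZ/m\cZ$ finite for all non-zero $m$, and apply Lemma~\ref{lem:finquodomain}. Now let $\fp$ be a non-zero prime ideal; pick a non-zero $m\in\fp$. Then $\cZ/\fp$ is a quotient of the finite ring $\cZ/m\cZ$, so it is finite, giving the finite-index claim. Finite index forces $\fp$ to be maximal (a finite integral domain is a field), so in fact every non-zero prime is maximal, i.e.\ $\cZ$ has dimension $\le 1$. Finite generation of $\fp$ as an ideal is the only point requiring a little work: since $\cZ/m\cZ$ is finite it is artinian, hence noetherian, so the image of $\fp$ in $\cZ/m\cZ$ is finitely generated; lifting generators and adjoining $m$ produces a finite generating set for $\fp$. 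This already gives most of (b), and in fact a clean way to package things is to deduce directly that $\cZ$ is noetherian, since a domain in which every prime ideal is finitely generated is noetherian by the Cohen criterion.

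For (b)$\Rightarrow$(c): assume $\cZ$ is a domain in which every non-zero prime is finitely generated of finite index. If $\cZ$ is a field we are done, so assume not; then there is a non-zero prime, hence (finite index forcing maximality, as above) $\cZ$ has Krull dimension exactly $1$. By the Cohen criterion again (every prime ideal finitely generated, the zero ideal being principal), $\cZ$ is noetherian. Finally, for a maximal ideal $\fm$, finite index of $\fm$ in $\cZ$ says precisely that $\cZ/\fm$ is a finite field. This yields (c).

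The main obstacle is the finite-generation assertion inside (a)$\Rightarrow$(b) together with the correct invocation of the Cohen criterion, ``a commutative ring in which every prime ideal is finitely generated is noetherian''; I would cite this standard fact (e.g.\ from Matsumura or Kaplansky) rather than reprove it. Everything else is routine: passing to quotients of the finite rings $\cZ/m\cZ$, using that finite domains are fields, and the dimension-zero-artinian argument for (c)$\Rightarrow$(a). One subtlety worth a sentence in the write-up is that in (a) one must check $\cZ$ being \emph{infinite} is genuinely used --- it is exactly what makes $\cZ$ (rather than $\cZ/m\cZ$) the right faithful module in Lemma~\ref{lem:finquodomain} and what rules out $\cZ$ itself being finite in the statement.
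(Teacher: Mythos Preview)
Your proof is correct, and the implications (a)$\Rightarrow$(b) and (b)$\Rightarrow$(c) match the paper's almost exactly, including the appeal to Cohen's criterion. The one genuine divergence is in (c)$\Rightarrow$(a): you argue directly that for non-zero $m$ the quotient $\cZ/m\cZ$ is zero-dimensional noetherian, hence artinian, hence a finite product of local artinian rings with finite residue fields, and therefore finite. The paper instead argues by contradiction: assuming some non-zero ideal $I$ has $\cZ/I$ infinite, it picks $I$ maximal with this property (using that $\cZ$ is noetherian), observes that $\cZ/I$ is then an infinite ring all of whose proper quotients are finite, and reapplies Lemma~\ref{lem:finquodomain} with $M=\cZ/I$ to conclude that $\cZ/I$ is a domain; thus $I$ is a non-zero prime, hence maximal by (c), contradicting infiniteness of $\cZ/I$. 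Your route is the standard structural one and is cleaner here; the paper's route has the virtue of reusing Lemma~\ref{lem:finquodomain} and avoiding any appeal to the structure theory of artinian rings.
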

\begin{proof}
First we prove that (a) implies (b).
From (a) it follows, by Lemma \ref{lem:finquodomain} applied to $M=Z$,
that $Z$ is a domain. Now let $\fp$ be a non-zero
prime ideal, and let $m\in\fp$ be non-zero. Then we
have $mZ\subset\fp\subset Z$, and since $Z/mZ$ is
finite, the index of $\fp$ in $Z$ is finite and
$\fp/mZ$ is finite. Hence $\fp$ is generated by
$m$ together with a finite set, and is therefore
finitely generated.

Now we prove that (b) implies (c). By \cite[Theorem 2]{Cohen},
each commutative ring of which every prime ideal is
finitely generated is noetherian. Hence (b) implies
that $Z$ is noetherian. If $\fp$ is a non-zero prime
ideal, then $Z/\fp$ is a finite domain, and therefore
a field. Hence each non-zero prime ideal is maximal,
so $Z$ has Krull dimension $0$ or $1$; in the former
case it must be a field.

Finally, suppose that (c) holds. Then, we will deduce (a) by showing that
for any non-zero ideal $I$ of $Z$, the ring $Z/I$ is finite. Suppose that
there exists a non-zero ideal $I$ in $Z$ such that $Z/I$ is infinite. Since
$Z$ is noetherian, we may, without loss of generality, assume that $I$ is
maximal among ideals with this property. So $Z/I$ is infinite, but its quotient
by any non-zero ideal is finite. It follows from Lemma \ref{lem:finquodomain},
applied to $M=Z/I$, that $Z/I$ is a domain, so $I$ is a prime ideal of $Z$.
It is also non-zero, so (c) implies that $I$ is maximal, and therefore that $Z/I$ is
finite, which is a contradiction.
\end{proof}

\section{On the units of semisimple rings}\label{sec:unitssemisimple}\noindent
By a \emph{division ring} we mean a ring $D$ with the property
$D^\times=D\backslash\{0\}$. If $D$ is a division ring and $n$ is a
positive integer, then $\M(n,D)$ denotes the ring of $n$ by $n$ matrices over $D$.
If $G$ is a group, then $G_{\ab}$ denotes
the maximal abelian quotient of $G$.

\begin{lemma}\label{lem:S}
Let $n$ be a positive integer, let $D$ be a division ring, and
for $x\in D^\times$ and $j\in\{1,\dots,n\}$, let $\delta_j(x)\in \M(n,D)$ be the
diagonal matrix with $j$th entry equal to $x$ and all other
entries equal to~$1$. Then each map $\delta_j$ is a group
homomorphism $D^\times\to\M(n,D)^\times$, they all induce the same group
homomorphism $\bar\delta\colon D^\times_{\ab}\to\M(n,D)^\times_{\ab}$, and if
$n\ne2$ or $\#D\ne2$ then $\bar\delta$ is surjective.
\end{lemma}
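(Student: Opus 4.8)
The plan is to check the group-homomorphism property and the "all $\delta_j$ induce the same map on abelianizations" assertion by direct matrix manipulation, and then to reduce the surjectivity claim to a generation statement for $\M(n,D)^\times$ modulo its commutator subgroup. First I would observe that $\delta_j(x)\delta_j(y)=\delta_j(xy)$ is immediate from the multiplication rule for diagonal matrices, so each $\delta_j$ is a group homomorphism $D^\times\to\M(n,D)^\times$. Next, to see that $\delta_j$ and $\delta_k$ agree after composing with $\M(n,D)^\times\to\M(n,D)^\times_{\ab}$, I would exhibit, for $j\ne k$, a permutation matrix $P$ (the transposition of the $j$th and $k$th coordinates) with $P\,\delta_j(x)\,P^{-1}=\delta_k(x)$; since conjugate elements have the same image in the abelianization, $\bar\delta_j=\bar\delta_k$. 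This gives a single well-defined homomorphism $\bar\delta\colon D^\times_{\ab}\to\M(n,D)^\times_{\ab}$.

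For surjectivity of $\bar\delta$ (under the hypothesis $n\ne 2$ or $\#D\ne 2$), the key point is to show that $\M(n,D)^\times=\GL(n,D)$ is generated by the image of $\delta_1$ together with $[\GL(n,D),\GL(n,D)]$. The natural tool is the theory of elementary matrices: let $E_{ij}(a)$ ($i\ne j$, $a\in D$) be the elementary transvection, and let $\E(n,D)$ be the subgroup they generate. One knows that each $E_{ij}(a)$ is a commutator of elementary matrices \emph{provided} $n\ge 3$, and that, apart from the single exceptional case $(n,\#D)=(2,2)$, one still has $\E(n,D)=[\GL(n,D),\GL(n,D)]$ and, more relevantly, $\E(n,D)\subset[\GL(n,D),\GL(n,D)]$ — this is exactly where the hypothesis "$n\ne2$ or $\#D\ne2$" enters. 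Granting this, every $g\in\GL(n,D)$ can be reduced by left/right multiplication by elements of $\E(n,D)$ (Gaussian elimination over a division ring: row- and column-operations clear out all off-diagonal entries and then all but one diagonal entry) to a matrix of the form $\delta_n(\det\nolimits' g)$ for a suitable "Dieudonné determinant" value; hence modulo $[\GL(n,D),\GL(n,D)]$ the class of $g$ lies in the image of $\bar\delta_n=\bar\delta$, proving surjectivity.

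The main obstacle, and the step requiring the most care, is the Gaussian-elimination / Whitehead-lemma argument over a \emph{noncommutative} division ring, together with pinning down precisely why the case $(n,\#D)=(2,2)$ must be excluded: when $n=2$ and $D=\mathbb F_2$, the group $\GL(2,\mathbb F_2)\cong S_3$ has abelianization of order $2$ while $\E(2,\mathbb F_2)$ is already all of $\GL(2,\mathbb F_2)$, so the transvections are \emph{not} contained in the commutator subgroup and the reduction above breaks down. In all other cases — $n\ge 3$ arbitrary $D$, or $n=2$ with $\#D\ge 3$ — I would invoke (or quickly reprove) the standard fact that transvections are commutators: for $n\ge 3$ via the identity $E_{ij}(a)=[E_{ik}(a),E_{kj}(1)]$ with $k\notin\{i,j\}$, and for $n=2$, $\#D\ge 3$ via conjugation of $E_{12}(a)$ by the diagonal matrix $\mathrm{diag}(t,1)$ with $t\in D^\times$, $t\ne 1$, yielding $E_{12}(a)=[\mathrm{diag}(t,1),E_{12}((t^{-1}-1)^{-1}a)]$ (choosing $a$ appropriately, noting $t^{-1}\ne 1$). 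Once transvections lie in $[\GL(n,D),\GL(n,D)]$, the elimination argument goes through verbatim and surjectivity of $\bar\delta$ follows.
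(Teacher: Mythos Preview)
Your proposal is correct and follows essentially the same route as the paper: both verify the homomorphism and conjugacy claims directly, then deduce surjectivity from the decomposition $\GL(n,D)=\E(n,D)\cdot\delta_n(D^\times)$ together with the fact that transvections lie in the commutator subgroup when $(n,\#D)\ne(2,2)$. The paper cites Artin's \emph{Geometric Algebra} for these last two facts where you sketch them directly, and handles the trivial case $n=1$ separately.
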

\begin{proof}
It is clear that each $\delta_j$ is a group
homomorphism, and that for each $x$ all $\delta_j(x)$ are
conjugate to each other, so all $\delta_j$ induce the same
map $D^\times_{\ab}\to\M(n,D)^\times_{\ab}$.
It is evidently surjective if $n=1$.

For $i$, $j\in\{1,\dots,n\}$, $i\ne j$, and $x\in D$, let
$B_{ij}(x)\in\M(n,D)$ be the matrix obtained from the unit
matrix by replacing the $(i,j)$-entry by~$x$; then one has
$B_{ij}(x)\in\M(n,D)^\times$. The subgroup of $\M(n,D)^\times$
generated by all $B_{ij}(x)$ is denoted by $\SL_n(D)$.

By \cite[Chapter IV, Theorem 4.1]{Artin}, we have
$\M(n,D)^\times=\SL_n(D){\cdot}\delta_n(D^\times)$. Each
$B_{ij}(x)$ is a transvection of the right $D$-vector space
$D^n$ in the sense of \cite[Chapter IV, Definition 4.1]{Artin}.
Assume now that $n>2$ or $\#D\ne2$. Then by
\cite[Chapter IV, Section 2]{Artin} each transvection belongs to
$[\M(n,D)^\times,\M(n,D)^\times]$, so
$\SL_n(D)\subset[\M(n,D)^\times,\M(n,D)^\times]$, and therefore
$$
\M(n,D)^\times=[\M(n,D)^\times,\M(n,D)^\times]{\cdot}\delta_n(D^\times).
$$
This implies that $\bar\delta$ is surjective.
\end{proof}

\begin{lemma}\label{lem:T}
Let $n$ be a positive integer, let $D$ be a division ring, and
for each $x\in D^\times$ let $\iota(x)\in\M(n,D)^\times$ be $x$ times
the identity matrix. Then $\iota$ is a group homomorphism
$D^\times\to\M(n,D)^\times$, and the group
$$
\M(n,D)^\times/(\iota(D^\times){\cdot}[\M(n,D)^\times,\M(n,D)^\times])
$$
is abelian of exponent dividing~$n$.
\end{lemma}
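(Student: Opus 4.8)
The plan is to deduce the statement from Lemma~\ref{lem:S}. First, $\iota$ is plainly a group homomorphism, since $\iota(x)\iota(y)=\iota(xy)$ and $\iota(x)^{-1}=\iota(x^{-1})$. Write $G=\M(n,D)^\times$ and $G'=[G,G]$; since $G'$ is normal in $G$, the subset $\iota(D^\times){\cdot}G'$ is the preimage under $G\to G/G'$ of a subgroup of the abelian group $G/G'$, hence is a normal subgroup of $G$, and the quotient $G/(\iota(D^\times){\cdot}G')$, being itself a quotient of $G/G'$, is abelian. It therefore remains to bound its exponent.

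The key point I would use is the identity
$$
\iota(x)=\delta_1(x)\,\delta_2(x)\cdots\delta_n(x)\qquad(x\in D^\times),
$$
with $\delta_1,\dots,\delta_n$ as in Lemma~\ref{lem:S}: its right-hand side is the diagonal matrix all of whose diagonal entries equal~$x$. Reducing modulo $G'$ and invoking the fact, from Lemma~\ref{lem:S}, that the $\delta_j$ all induce one and the same homomorphism $\bar\delta\colon D^\times_{\ab}\to G_{\ab}$ (where $G_{\ab}=G/G'$), this gives $\overline{\iota(x)}=\bar\delta([x])^{\,n}$ in $G_{\ab}$, where $[x]$ denotes the class of $x$ in $D^\times_{\ab}$. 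Now suppose $n\ne2$ or $\#D\ne2$, so that $\bar\delta$ is surjective by Lemma~\ref{lem:S}. For any $g\in G$ I would choose $x\in D^\times$ with $\bar\delta([x])$ equal to the image $\bar g$ of $g$ in $G_{\ab}$; then $\bar g^{\,n}=\bar\delta([x])^{\,n}=\overline{\iota(x)}$, so $g^n\in\iota(x){\cdot}G'\subset\iota(D^\times){\cdot}G'$, i.e.\ $g^n$ is trivial in $G/(\iota(D^\times){\cdot}G')$. As $g$ is arbitrary, that quotient has exponent dividing~$n$.

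It remains to handle the case $n=2$, $\#D=2$ excluded from Lemma~\ref{lem:S}. Here $D^\times=\{1\}$, so $\iota(D^\times)$ is trivial, whereas $G=\GL(2,\mathbf{F}_2)$ is non-abelian of order~$6$; thus $G'$ has order~$3$ and $G/(\iota(D^\times){\cdot}G')=G/G'$ has order~$2$, hence exponent $2$, which divides~$n$. I do not expect any genuine obstacle here beyond bookkeeping: once Lemma~\ref{lem:S} is in hand the argument is short, and the only step requiring its own treatment is this small excluded case, which is settled by the elementary computation just indicated.
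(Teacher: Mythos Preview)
Your proof is correct and follows essentially the same approach as the paper: both use the identity $\iota(x)=\prod_{j=1}^n\delta_j(x)$, reduce modulo commutators to obtain $\bar\iota=\bar\delta^{\,n}$, invoke the surjectivity of $\bar\delta$ from Lemma~\ref{lem:S}, and dispose of the case $n=\#D=2$ by direct computation. The only cosmetic difference is that the paper phrases the main step as the image equality $\bar\iota(D^\times_{\ab})=(\M(n,D)^\times_{\ab})^n$, whereas you argue element by element.
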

\begin{proof}
It is clear that $\iota$ is a group homomorphism.
If we have $n=\#D=2$, then $\M(n,D)^\times$ is a non-abelian group
of order $6$, in which case $\M(n,D)^\times_{\ab}$ has order $2$ and the
conclusion of the lemma is valid. Assume now that $n\ne2$ or $\#D\ne2$,
so that the map $\bar\delta$ from Lemma \ref{lem:S} is surjective.

Denote by $\bar\iota\colon D^\times_{\ab}\to\M(n,D)^\times_{\ab}$ the map
induced by $\iota$. For each $x\in D^\times$ one has
$\iota(x)=\prod_{j=1}^n\delta_j(x)$ and therefore
$\bar\iota(x)=\bar\delta(x)^n$, so the surjectivity of
$\bar\delta$ yields
$$\bar\iota(D^\times_{\ab})=\bar\delta(D^\times_{\ab})^n =(\M(n,D)^\times_{\ab})^n,$$
and the lemma is proved.
\end{proof}\noindent
The following result is due to Wedderburn \cite{Wedderburn}.
\begin{theorem}\label{thm:U}
Let $D$ be a division ring with centre $\Zz(D)$, let
$a\in D$, and let $f\in\Zz(D)[X]$ be an irreducible
polynomial with leading coefficient $1$ such that $f(a)=0$.
Put $l=\deg f$.
Then there exist $b_1$, $b_2$, \dots, $b_l\in D^\times$
such that in $D[X]$ one has
$$f=(X-b_1ab_1^{-1})\cdot\ldots\cdot(X-b_lab_l^{-1}).$$
\end{theorem}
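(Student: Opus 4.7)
The plan is to induct on $l = \deg f$, iteratively peeling off linear factors $(X - a_j)$ from the right, with each $a_j$ a conjugate of $a$ in $D$. The case $l = 1$ is trivial: $f = X - a$, and we take $b_1 = 1$. For the inductive step, I would use that $D[X]$ is a left Euclidean ring (division by a monic polynomial is possible on the right): whenever we have a factorization $f = g_i \cdot q_i$ with $q_i = (X - a_i)(X - a_{i-1}) \cdots (X - a_1)$ already a product of linear factors coming from conjugates of $a$, and with $\deg g_i = l - i \geq 1$, producing a new conjugate root of $g_i$ will let me factor $g_i = g_{i+1}(X - a_{i+1})$ further, and so on until $g_l = 1$ and $f = (X - a_l) \cdots (X - a_1)$.

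The core computation is a ``root-shifting'' trick. I would pick any $c \in D^\times$, set $y := c a c^{-1}$, and note that $f(y) = c f(a) c^{-1} = 0$ because $f$ has central coefficients. Writing $g_i = \sum_r e_r X^r$ and using that $X$ is central in $D[X]$, expansion of $(g_i q_i)(y)$ should give $\sum_r e_r \, q_i(y) \, y^r = 0$. Right-multiplying by $c$ and using $y^r c = c a^r$ converts this to $\sum_r e_r v a^r = 0$ with $v := q_i(y) \cdot c$. Provided $v \neq 0$, the identity $v a^r = (v a v^{-1})^r v$ rewrites the equation as $g_i(v a v^{-1}) \cdot v = 0$, forcing $g_i$ to vanish at $a_{i+1} := v a v^{-1}$, which is a conjugate of $a$.

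The main obstacle is ensuring that at each stage one can choose $c$ so that $v \neq 0$, i.e., that $q_i(y) \neq 0$ for some $y$ in the conjugacy class $C$ of $a$. I would dispose of this by studying the left ideal $I = \{ p \in D[X] : p(y) = 0 \text{ for every } y \in C \}$ of $D[X]$. Since $D[X]$ is a left principal ideal domain, $I = D[X] \cdot m$ for a unique monic polynomial $m$. Conjugation by $b \in D^\times$ defines a ring automorphism $\tau_b$ of $D[X]$ (sending $\sum c_i X^i$ to $\sum (b c_i b^{-1}) X^i$, well-defined because $X$ is central) satisfying $\tau_b(p)(byb^{-1}) = b \, p(y) \, b^{-1}$; since $C$ is closed under conjugation, $\tau_b$ preserves $I$. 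Uniqueness of the monic generator then forces $\tau_b(m) = m$ for every $b \in D^\times$, whence $m \in \Zz(D)[X]$. But $m(a) = 0$ and $f$ is the minimal polynomial of $a$ over $\Zz(D)$ (being monic, irreducible in $\Zz(D)[X]$, and vanishing at $a$), so $f$ divides $m$ and $\deg m \geq l$. Consequently no nonzero polynomial in $D[X]$ of degree less than $l$ lies in $I$; in particular $q_i$, having degree $i < l$, does not vanish on all of $C$. The required $c$ will therefore exist at every stage, and the induction will close.
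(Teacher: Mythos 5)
Your proof is correct and complete, and it is essentially the classical argument of Wedderburn. The paper supplies no proof of its own but simply cites \cite[Theorem (16.9)]{Lam}, where the same two ingredients appear: the root-shifting computation that lets one peel conjugate linear factors off the right, and the lemma that the monic generator of the left ideal of polynomials in $D[X]$ vanishing identically on the conjugacy class of $a$ has coefficients in $\Zz(D)$ and therefore degree at least $l$, so the iteration never stalls.
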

\begin{proof}
See \cite[Theorem (16.9)]{Lam}.
\end{proof}
\begin{lemma}\label{lem:V}
Let $D$ be a division ring that has finite vector space
dimension $m^2$ over its centre $\Zz(D)$, where $m$ is a
positive integer. Then the group
$D^\times/(\Zz(D)^\times{\cdot}[D^\times,D^\times])$ is abelian of exponent
dividing~$m$.
\end{lemma}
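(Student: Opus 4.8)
The plan is to reduce the statement about a general division ring $D$ of dimension $m^2$ over its centre $k = \Zz(D)$ to the matrix-algebra statement of Lemma~\ref{lem:T}, via a splitting field. First I would recall the standard fact that there is a finite separable (even a maximal subfield) extension $k'/k$ inside $D$, of degree $m$, that splits $D$, i.e. $D \otimes_k k' \cong \M(m, k')$; in fact any maximal subfield works, and $[D:k] = m^2$ forces $[k':k] = m$. Let $\varphi\colon D \hookrightarrow D\otimes_k k' \xrightarrow{\sim} \M(m,k')$ be the resulting injective $k$-algebra homomorphism. Restricting to unit groups gives an injective group homomorphism $D^\times \hookrightarrow \M(m,k')^\times$.

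Next I would chase how $\varphi$ interacts with the two distinguished subgroups. The centre $k^\times \subset D^\times$ lands inside the scalar matrices $\iota((k')^\times) \subset \M(m,k')^\times$ — more precisely $\varphi(k^\times) \subseteq \iota((k')^\times)$, since elements of $k$ become scalars after base change. And commutators go to commutators: $\varphi([D^\times,D^\times]) \subseteq [\M(m,k')^\times, \M(m,k')^\times]$. Therefore $\varphi$ induces a group homomorphism
$$
\bar\varphi\colon D^\times/\bigl(k^\times[D^\times,D^\times]\bigr) \longrightarrow \M(m,k')^\times/\bigl(\iota((k')^\times)[\M(m,k')^\times,\M(m,k')^\times]\bigr).
$$
Now I claim $\bar\varphi$ is injective. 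This is the step I expect to be the main obstacle: I need that if $d \in D^\times$ satisfies $\varphi(d) = \iota(s)\, z$ with $s \in (k')^\times$ and $z \in [\M(m,k')^\times,\M(m,k')^\times]$, then $d \in k^\times[D^\times,D^\times]$. The clean way is to take reduced norms: the reduced norm $\Nrd\colon D^\times \to k^\times$ is compatible with $\M(m,k')$'s determinant under $\varphi$ (since reduced norm is defined via any splitting, and the composite with $k'$-base change is the determinant on $\M(m,k')$), it kills $[D^\times,D^\times]$, and on $k^\times$ it is $x \mapsto x^m$. An element of $[\M(m,k')^\times,\M(m,k')^\times] = \SL_m(k')$ (by Lemma~\ref{lem:S}'s argument, for $(m,\#k')\neq(2,2)$; the small case is handled separately as in Lemma~\ref{lem:T}) has determinant $1$, and $\det(\iota(s)) = s^m$. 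So $\varphi(d) = \iota(s)z$ forces $\Nrd(d) = s^m \in (k^\times)$ — wait, $s \in (k')^\times$ a priori; but $\Nrd(d) \in k^\times$, so $s^m \in k^\times$. Hmm, this does not immediately put $d$ into $k^\times[D^\times,D^\times]$; rather, one shows directly: $\bar\varphi(d)$ trivial means $\varphi(d) \in \iota((k')^\times)\SL_m(k')$, i.e.\ $\det\varphi(d) = \Nrd(d)$ lies in $((k')^\times)^m$. But I actually want the statement about exponent, not injectivity per se.

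Let me restructure: the real goal is just the exponent-$m$ claim, and for that I do not need injectivity of $\bar\varphi$ at all. By Lemma~\ref{lem:T} applied to $n=m$ and the division ring $k'$ (here $k'$ is a field, so this is elementary, or one invokes the lemma as stated), the target group $\M(m,k')^\times/(\iota((k')^\times)[\M(m,k')^\times,\M(m,k')^\times])$ is abelian of exponent dividing $m$. Since $\bar\varphi$ is a group homomorphism, its image is abelian of exponent dividing $m$; but the target of $\bar\varphi$ being exponent-$m$ says nothing about the source. So I do need $\bar\varphi$ injective after all, or I need a direct argument. Here is the direct argument I would actually write: take any $d \in D^\times$; I want $d^m \in k^\times[D^\times,D^\times]$, equivalently $d^m$ has trivial image in the abelian group $G := D^\times/(k^\times[D^\times,D^\times])$. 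Consider $\varphi(d) \in \M(m,k')^\times$; by Lemma~\ref{lem:T} (for $k'$), $\varphi(d)^m \in \iota((k')^\times)\cdot[\M(m,k')^\times,\M(m,k')^\times]$. Now I use that the reduced characteristic polynomial of $d$ over $k$ — call it $f \in k[X]$, of degree $m$ — satisfies $f(d) = 0$, and by Wedderburn's factorisation theorem (Theorem~\ref{thm:U}), applied to the minimal polynomial $g \mid f$ of $d$ over $k$ and then accounting for multiplicity (or directly to $f$ if $f$ is irreducible; in general $f = g^{m/\deg g}$ for central division algebras), $f$ factors in $D[X]$ as $\prod_{i=1}^m (X - b_i d b_i^{-1})$ for suitable $b_i \in D^\times$. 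Comparing constant terms, $\Nrd(d) = (-1)^m f(0) = \prod_i b_i d b_i^{-1}$ up to sign, so in the abelian quotient $G$ we get $\bar{\Nrd(d)} = \overline{d}^m$ (each conjugate $b_idb_i^{-1}$ equals $d$ in $G$). But $\Nrd(d) \in k^\times$, so $\bar{\Nrd(d)} = 1$ in $G$. Hence $\overline{d}^m = 1$ in $G$, which is exactly the assertion. The step I expect to require the most care is the passage from Wedderburn's theorem (stated for irreducible $f$ with $f(a) = 0$) to the reduced characteristic polynomial of a general $d$: I must argue that $\deg(\text{min poly of }d \text{ over }k)$ divides $m$ and that applying Theorem~\ref{thm:U} to that minimal polynomial, then raising to the appropriate power, yields a degree-$m$ product of conjugates of $d$ whose "constant term" is $\pm\Nrd(d) \in k^\times$ — this uses the structure theory of central simple algebras (the reduced norm of $d$ is a power of the norm coming from $k[d]$), which I would either cite from \cite{CR1} or prove in the couple of lines it takes.
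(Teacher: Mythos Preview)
Your final ``direct argument'' is correct and is essentially the paper's proof, but you have dressed it in more machinery than needed. The paper does not invoke reduced norms or reduced characteristic polynomials at all: given $a\in D^\times$, it simply observes that $l=[\Zz(D)(a):\Zz(D)]$ divides $m$ (since $\Zz(D)(a)$ sits inside a maximal subfield of degree $m$), applies Theorem~\ref{thm:U} to the minimal polynomial $g$ of $a$ to get $g=\prod_{i=1}^l(X-b_iab_i^{-1})$, reads off the constant term $\prod_i b_iab_i^{-1}=(-1)^lg(0)\in\Zz(D)^\times$, and passes to the quotient to obtain $\bar a^{\,l}=1$, hence $\bar a^{\,m}=1$. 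There is no need to assemble $m$ conjugates or to identify the product as $\Nrd(a)$; the constant term of the \emph{minimal} polynomial already lies in $\Zz(D)^\times$, and $l\mid m$ finishes the job.

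Your splitting-field detour via Lemma~\ref{lem:T} is, as you recognised, not usable without the injectivity of $\bar\varphi$, which you did not establish; you were right to abandon it. The subsequent passage through $f=g^{m/l}$ is valid (this identity does hold in a central division algebra) but superfluous: everything you need is already contained in the degree-$l$ factorisation of $g$. So the step you flagged as ``requiring the most care'' in fact evaporates once you work with $g$ rather than $f$.
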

\begin{proof}
Since $D^\times/(\Zz(D)^\times{\cdot}[D^\times,D^\times])$ is a
quotient of $D^\times_{\ab}$, it is an abelian group. Let $a\in D^\times$.
It will suffice to show that the image $\bar a$ of $a$ in the quotient
$D^\times/(\Zz(D)^\times{\cdot}[D^\times,D^\times])$ has order dividing~$m$.
The subfield $\Zz(D)(a)$ of $D$ is contained in a maximal
subfield of~$D$, and each maximal subfield of $D$ is an
extension field of $\Zz(D)$ of degree~$m$, by~\cite[(7.22)]{CR1}. Hence
we have $[\Zz(D)(a):\Zz(D)]=l$ for some divisor $l$ of~$m$,
and $a$ is a zero of an irreducible polynomial $f\in\Zz(D)[X]$
of degree~$l$ with leading coefficient~$1$. Using Theorem \ref{thm:U}
we find $b_1$, \dots, $b_l\in D^\times$ such that
$b_1ab_1^{-1}\cdot\ldots\cdot b_lab_l^{-1}=(-1)^lf(0)\in\Zz(D)^\times$.
Mapping this identity to the abelian group
$D^\times/(\Zz(D)^\times{\cdot}[D^\times,D^\times])$ we obtain $\bar a^l=1$, so
$\bar a^m=1$, as required. This proves Lemma \ref{lem:V}.
\end{proof}\noindent
We can now prove Theorem \ref{thm:expintro} and deduce Theorem
\ref{thm:finexpintro}. We recall the statements.
\begin{theorem}\label{thm:W}
Let $k$ be a field, and let $B$ be a central simple algebra over~$k$.
Let the dimension of $B$ as a vector space over $k$ be $d^2$, where
$d$ is a positive integer. Then the group $B^\times/(k^\times[B^\times,B^\times])$ is
abelian of exponent dividing~$d$.
\end{theorem}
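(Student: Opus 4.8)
The plan is to reduce Theorem~\ref{thm:W} to the division ring case treated in Lemma~\ref{lem:V} by passing to the algebraic closure and using Wedderburn's structure theorem. By Wedderburn's theorem, a central simple algebra $B$ over $k$ is of the form $\M(n,D)$ for a unique integer $n\geq1$ and a division ring $D$ with centre $k$, and if $\dim_k D=m^2$ then $d=nm$. The group $B^\times=\M(n,D)^\times$ fits into Lemma~\ref{lem:T}, which tells us that $\M(n,D)^\times/(\iota(D^\times)[\M(n,D)^\times,\M(n,D)^\times])$ is abelian of exponent dividing~$n$, where $\iota\colon D^\times\to\M(n,D)^\times$ is the diagonal embedding $x\mapsto x\cdot\mathrm{Id}$.

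The key step is to combine this with Lemma~\ref{lem:V}. Note that $\iota$ restricts to an isomorphism $\Zz(D)^\times=k^\times\to k^\times\subset B^\times$, identifying the centre of $D$ with the centre of $B$, and $\iota$ carries $[D^\times,D^\times]$ into $[B^\times,B^\times]$. Hence $\iota$ induces a homomorphism
$$
\bar\iota\colon D^\times/(\Zz(D)^\times[D^\times,D^\times])\;\longrightarrow\;B^\times/(k^\times[B^\times,B^\times]),
$$
and its image is exactly $\iota(D^\times)k^\times[B^\times,B^\times]/(k^\times[B^\times,B^\times])=\iota(D^\times)[B^\times,B^\times]/(k^\times[B^\times,B^\times])$ (absorbing $k^\times\subset\iota(D^\times)$). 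By Lemma~\ref{lem:V} the source of $\bar\iota$ has exponent dividing~$m$, so its image $\iota(D^\times)[B^\times,B^\times]/(k^\times[B^\times,B^\times])$ is a subgroup of $B^\times/(k^\times[B^\times,B^\times])$ of exponent dividing~$m$. On the other hand, Lemma~\ref{lem:T} says the quotient of $B^\times/(k^\times[B^\times,B^\times])$ by that same subgroup has exponent dividing~$n$. Since the group is abelian, an element raised to the $n$th power lies in a subgroup of exponent dividing~$m$, hence raised to the $nm=d$th power is trivial; therefore $B^\times/(k^\times[B^\times,B^\times])$ has exponent dividing~$d$.

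I expect the main obstacle to be purely bookkeeping: making sure the two normal subgroups $k^\times[B^\times,B^\times]$ and $\iota(D^\times)[B^\times,B^\times]$ are correctly nested (indeed $k^\times=\iota(k^\times)\subset\iota(D^\times)$, so the former is contained in the latter) and that the centre of $\M(n,D)$ is precisely $\iota(k^\times)$ on units — both routine consequences of Wedderburn's theorem and the identification $d=nm$. One subtlety worth a sentence: the exceptional case $n=\#D=2$ is already absorbed in Lemma~\ref{lem:T}, which was stated to cover it, so no separate argument is needed here. Once these identifications are in place, the exponent estimate is the elementary fact that in an abelian group, if $G/H$ has exponent dividing~$n$ and $H$ has exponent dividing~$m$, then $G$ has exponent dividing~$nm$, applied with $G=B^\times/(k^\times[B^\times,B^\times])$ and $H=\iota(D^\times)[B^\times,B^\times]/(k^\times[B^\times,B^\times])$. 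Finally, Theorem~\ref{thm:finexpintro} follows by writing a semisimple ring $B$ finitely generated over its centre as a finite product of central simple algebras over the (finitely many) factor fields of $\Zz(B)$, so that the exponent of $B^\times/(\Zz(B)^\times[B^\times,B^\times])$ divides the least common multiple of the corresponding integers~$d_i$, hence is finite.
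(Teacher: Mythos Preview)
Your argument is correct and matches the paper's own proof: Wedderburn gives $B\cong\M(n,D)$ with $d=nm$, then Lemma~\ref{lem:V} and Lemma~\ref{lem:T} bound the two layers of the quotient $B^\times/(k^\times[B^\times,B^\times])$ by $m$ and $n$ respectively, yielding exponent dividing $nm=d$. One quibble: despite your opening sentence, you never actually pass to the algebraic closure, nor is any such passage needed --- Wedderburn's structure theorem over $k$ itself does all the work.
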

\begin{proof}
By \cite[\S 14, Theorem 1]{Alg8}, there are a positive integer $n$ and
a division ring $D$ with $\Zz(D)=k$ such that $B$ is, as an algebra
over $k$, isomorphic to $\M(n,D)$. Then $D$ has finite degree $m^2$
over $k$, and $nm=d$. By Lemma \ref{lem:V}, the cokernel of the natural group
homomorphism $k^\times\to D^\times_{\ab}$ has exponent dividing $m$, and by
Lemma \ref{lem:T} the cokernel of the natural group homomorphism
$D^\times_{\ab}\to\M(n,D)^\times$
has exponent dividing~$n$. It follows that the cokernel of the natural
group homomorphism $k^\times\to\M(n,D)^\times_{\ab}$ has exponent dividing
$nm=d$.
\end{proof}

\begin{theorem}\label{thm:X}
Let $B$ be a semisimple ring that is finitely generated as a module
over its centre $\Zz(B)$. Then
$B^\times/(\Zz(B)^\times[B^\times,B^\times])$ is an abelian group of
finite exponent.
\end{theorem}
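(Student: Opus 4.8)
The plan is to reduce the statement to the central simple case already handled in Theorem~\ref{thm:W}. First I would use the structure theory of semisimple rings recalled earlier: since $B$ is semisimple, it is a finite product $B=\prod_{i=1}^r B_i$ of simple rings $B_i$. This product decomposition is compatible with centres, $\Zz(B)=\prod_i\Zz(B_i)$, and with unit groups, $B^\times=\prod_i B_i^\times$; moreover the commutator subgroup and the quotient $B^\times/(\Zz(B)^\times[B^\times,B^\times])$ decompose as corresponding products, so it suffices to treat a single simple factor. Hence I may assume $B$ is a simple ring, finitely generated as a module over its centre $k=\Zz(B)$, which is then a field (the centre of a simple ring is a field).

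Next I would verify that such a $B$ is in fact a central simple algebra over $k$ in the sense used in Theorem~\ref{thm:W}: it is simple with centre $k$, and being finitely generated as a module over the field $k$ it has finite $k$-dimension. Write $\dim_k B=d^2$ for a positive integer $d$ (the fact that the dimension is a perfect square is the cited result \cite[(7.22)]{CR1}, already invoked in the introduction). Now Theorem~\ref{thm:W} applies directly and tells us that $B^\times/(k^\times[B^\times,B^\times])$ is abelian of exponent dividing $d$, which is certainly a finite exponent. Reassembling the finitely many factors, $B^\times/(\Zz(B)^\times[B^\times,B^\times])$ is abelian of exponent dividing $\mathrm{lcm}$ of the individual $d_i$, hence of finite exponent.

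The only point requiring a little care — and the main obstacle, such as it is — is the bookkeeping that the formation of the quotient $G\mapsto G/(\Zz\cdot[G,G])$ really does commute with the finite product decomposition, i.e. that $[B^\times,B^\times]=\prod_i[B_i^\times,B_i^\times]$ and $\Zz(B)^\times=\prod_i\Zz(B_i)^\times$ inside $B^\times=\prod_i B_i^\times$; this is routine, since commutators in a product are computed componentwise and the centre of a product ring is the product of the centres. Everything else is an immediate appeal to the already-established Theorem~\ref{thm:W} together with standard semisimple structure theory. In fact this is why the excerpt phrases Theorem~\ref{thm:finexpintro} as "an immediate consequence" of Theorem~\ref{thm:expintro}: the passage from central simple to semisimple is purely formal, and one does not even need to track the exponent precisely.
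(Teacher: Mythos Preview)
Your proposal is correct and matches the paper's own proof essentially line for line: reduce to the simple case via the finite product decomposition of a semisimple ring, observe that a simple ring finite over its centre is a central simple algebra over that field, and invoke Theorem~\ref{thm:W}. The paper's write-up is terser (it omits the bookkeeping about centres and commutators commuting with the product decomposition that you spell out), but the argument is the same.
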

\begin{proof}
In the case the semisimple ring $B$ is simple, our
hypothesis that it be finite over its centre implies that it is a
central simple algebra over $\Zz(B)$, and the assertion follows from Theorem
\ref{thm:W}.
Generally, by \cite[Chapter 1, Theorem (3.5)]{Lam} the ring $B$ is a product
of finitely many semisimple rings that are simple, and the result follows from
the case we just did.
\end{proof}


\section{Skew correspondences as morphisms}\label{sec:skewcorres}\noindent
As announced in the introduction, in this section we elaborate upon an argument
of Serre (see e.g. \cite[Tag 0B0J]{stacks-project}) to prove an equivalence between two categories of modules.
The main result of the section is Theorem \ref{thm:equivcat}.
We will need the notion of a skew correspondence (Definition \ref{def:skew}),
and the constructions of the categories $\cC_{\skew}$ and $\cC_{\com}$
(Definition \ref{def:skewcom}).

\begin{notation}\label{not:Z}
The following assumptions will be in force throughout the present section:
$Z$ is an infinite commutative ring that satisfies the equivalent conditions of
Theorem \ref{thm:R}, with field of fractions $Q$; further, $A$ is a
$Q$-algebra of finite vector space dimension over $Q$, and $R$ is a
left-noetherian sub-$Z$-algebra of $A$ with the property that $Q\cdot R = A$.
By an $R$-module we shall always mean a left $R$-module.
We call a module
\emph{finite} if its cardinality is finite.
If $L$ is a finitely generated $R$-module, let $L_{\tors}$ denote
the set of all elements of $L$ that have a non-zero annihilator
in $Z$. Since the image of $Z$ in $R$ is central, $L_{\tors}$ is an
sub-$R$-module.
\end{notation}\noindent
We remark that the hypotheses of Section \ref{sec:2} on the category $\cC$
are satisfied for the category of finitely generated $R$-modules.
We will tacitly use this fact throughout the rest of the paper.

\begin{lemma}\label{lem:fintors}
Let $L$ be a finitely generated $R$-module, and let $U$ be a sub-$R$-module.
Then $U$ is finite if and only if it is contained in $L_{\tors}$.
\end{lemma}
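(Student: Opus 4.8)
The plan is to prove both implications separately, relying on Lemma \ref{lem:P} for the ``only if'' direction and on the finite generation of $L$ together with the residual finiteness of $Z$ for the ``if'' direction.

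First I would prove that if $U\subset L_{\tors}$ then $U$ is finite. Since $R$ is left-noetherian and $L$ is finitely generated, the submodule $U$ is itself finitely generated, say by $u_1,\dots,u_r$. Each $u_i$ lies in $L_{\tors}$, so it has a non-zero annihilator $m_i\in Z$; setting $m=\prod_i m_i$, which is non-zero because $Z$ is a domain, we get $mU=0$, so $U$ is a finitely generated module over $R/mR$. Now I would observe that $R/mR$ is finite: indeed $R\subset A$ is a sub-$Z$-module of the finite-dimensional $Q$-vector space $A$, so by Lemma \ref{lem:P} the group $R/mR$ is finite (of order dividing $\#(Z/mZ)^{\dim_Q A}$). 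A finitely generated module over a finite ring is finite, so $U$ is finite.

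For the converse, suppose $U$ is finite; I want to show $U\subset L_{\tors}$, i.e.\ every $u\in U$ has a non-zero annihilator in $Z$. Given $u\in U$, consider the map $Z\to U$ sending $z\mapsto zu$ (using that the image of $Z$ in $R$ is central, so this lands in the sub-$R$-module generated by $u$, which is inside $U$). Since $U$ is finite and $Z$ is infinite, this map cannot be injective, so there exist distinct $z_1,z_2\in Z$ with $z_1u=z_2u$, whence $(z_1-z_2)u=0$ with $z_1-z_2\neq 0$. Thus $u\in L_{\tors}$, and since $u$ was arbitrary, $U\subset L_{\tors}$.

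I do not expect any serious obstacle here; the only point requiring a little care is the appeal to Lemma \ref{lem:P} to see that $R/mR$ is finite, which is what lets us conclude that a finitely generated $R/mR$-module is finite — this is where the specific hypotheses on $Z$ (residual finiteness) and on $A$ (finite $Q$-dimension) enter. The noetherian hypothesis on $R$ is used only to pass from ``$U$ a submodule of a finitely generated module'' to ``$U$ finitely generated''.
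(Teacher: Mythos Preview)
Your proof is correct and follows essentially the same approach as the paper. The only cosmetic difference is that the paper applies the noetherian/common-annihilator/Lemma~\ref{lem:P} argument to $L_{\tors}$ itself (thereby obtaining the slightly stronger and later-reused fact that $L_{\tors}$ is finite, from which finiteness of any $U\subset L_{\tors}$ is immediate), whereas you apply the identical argument directly to $U$.
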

\begin{proof}
First, we show that $L_{\tors}$ is finite.
Since $R$ is left-noetherian, $L_{\tors}$ is finitely generated as an $R$-module.
So there exists a non-zero $m\in \cZ$ that annihilates $L_{\tors}$, and
$L_{\tors}$ is then a finitely generated module over the ring $R/mR$,
which is finite by Lemma \ref{lem:P}. This proves one implication.

For the converse, let $U\subset L$ be a finite
sub-$R$-module. Then for each $x\in U$, the set
$\{zx:z\in Z\}$ is finite, so the annihilator of $x$
in $Z$ has finite index in $Z$; in particular it is
non-zero, since $Z$ is assumed to be infinite, so
$x\in L_{\tors}$.
\end{proof}

\begin{theorem}\label{thm:parta}
Let $L$, $M$ be two finitely generated $R$-modules. Then there exists an
isogeny of $R$-modules $L\rar M$ if and only if there exists a
commensurability of $R$-modules
$L\rightleftharpoons M$, and if and only if there exists an
isomorphism of $A$-modules $Q\otimes_{\cZ} L\cong Q\otimes_Z M$.
\end{theorem}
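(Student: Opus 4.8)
The plan is to prove the three-way equivalence by showing two implications, the reverse ones being essentially trivial: an isogeny $f\colon L\to M$ gives the commensurability $c_f=(L,\id,f)$, so the first statement implies the second; and applying the functor $Q\otimes_{\cZ}-$ to a commensurability $(N,f,g)\colon L\rightleftharpoons M$ turns the isogenies $f$, $g$ into $A$-module maps that become isomorphisms after inverting $\cZ\setminus\{0\}$ (since their kernels and cokernels are finite, hence $\cZ$-torsion, hence killed by $Q\otimes_{\cZ}-$ by Lemma~\ref{lem:fintors} and Lemma~\ref{lem:P}), so the second statement implies the third. The substance is therefore to go from an $A$-module isomorphism $\varphi\colon Q\otimes_{\cZ}L\xrightarrow{\sim}Q\otimes_{\cZ}M$ back to an honest $R$-module isogeny $L\to M$.

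First I would reduce to the torsion-free case. Write $L'=L/L_{\tors}$ and $M'=M/M_{\tors}$; by Lemma~\ref{lem:fintors} the modules $L_{\tors}$, $M_{\tors}$ are finite, so the quotient maps $L\to L'$, $M\to M'$ are isogenies and $Q\otimes_{\cZ}L\cong Q\otimes_{\cZ}L'$, $Q\otimes_{\cZ}M\cong Q\otimes_{\cZ}M'$ canonically. So it suffices to produce an isogeny $L'\to M'$, i.e. I may assume $L$ and $M$ are $\cZ$-torsion-free, hence embed in $Q\otimes_{\cZ}L$ and $Q\otimes_{\cZ}M$ respectively. Now identify $V:=Q\otimes_{\cZ}L$ with $Q\otimes_{\cZ}M$ via $\varphi$, so that both $L$ and $M$ sit inside the same finite-dimensional $A$-module $V$ as finitely generated $R$-submodules with $Q\cdot L=Q\cdot M=V$. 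The goal becomes: find a non-zero $m\in\cZ$ with $mM\subseteq L$, for then the multiplication-by-$m$ map $M\to L$ is an $R$-module homomorphism, it is injective (as $M$ is torsion-free), and its cokernel $L/mM$ is finite — indeed $L/mL$ is finite by Lemma~\ref{lem:P} and $L/mM$ surjects from a finite extension of it since $mM\subseteq L$ and $mM\supseteq$ (something of finite index)... more directly, $mL\subseteq mM\subseteq L$ and $L/mL$ is finite, so $L/mM$ is finite. Hence $M\xrightarrow{m}L$ is an isogeny.

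To find such an $m$: pick a finite generating set $x_1,\dots,x_r$ of $M$ as an $R$-module. Each $x_i\in V=Q\cdot L$, and since $L$ is a finitely generated $R$-module with $Q\cdot L=V$, I need that every element of $V$ has a $\cZ$-multiple landing in $L$. This holds because $V=Q\otimes_{\cZ}L$, so any $v\in V$ can be written as a $Q$-linear combination $\sum q_j\ell_j$ with $\ell_j\in L$, and clearing denominators — using that $Q$ is the field of fractions of $\cZ$ — gives $m_v v\in L$ for some non-zero $m_v\in\cZ$. Taking $m=\prod_i m_{x_i}$, which is non-zero since $\cZ$ is a domain, we get $mx_i\in L$ for all $i$; since $L$ is an $R$-submodule and $m$ is central, $mM=\sum_i Rm x_i\subseteq L$. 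This completes the construction.

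The main obstacle I anticipate is bookkeeping rather than depth: making sure the passage through $L_{\tors}$, $M_{\tors}$ is done cleanly (one must check $L_{\tors}$ is really an $R$-submodule — noted in Notation~\ref{not:Z} — and that $Q\otimes_{\cZ}-$ kills it, which needs torsion-freeness of $\cZ$ or rather just that $\cZ$ is a domain so $Q\otimes_{\cZ}(\text{torsion})=0$), and verifying that the various maps one writes down are genuinely $R$-linear and not merely $\cZ$-linear — this is where centrality of the image of $\cZ$ in $R$ (again flagged in Notation~\ref{not:Z}) does the work. No use of semisimplicity of $A$ or $R$ is needed here, consistent with the remark in the introduction that part~(a) of Theorem~\ref{thm:mainintro} holds under the weaker left-noetherian hypothesis; the only inputs are Lemma~\ref{lem:P}, Lemma~\ref{lem:fintors}, and that $Q$ is the fraction field of the domain $\cZ$.
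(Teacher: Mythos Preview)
Your approach is the same as the paper's: reduce to torsion-free modules via Lemma~\ref{lem:fintors}, embed both in $V$, and clear denominators using Lemma~\ref{lem:P}. Two bookkeeping slips need fixing, and they are precisely the points where the paper is more careful.

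First, the claim ``$mL\subseteq mM$'' in your finiteness argument for $L/mM$ is unjustified --- it would require $L\subseteq M$, which you do not have. You must clear denominators in \emph{both} directions: find non-zero $m,m'\in\cZ$ with $mM\subseteq L$ and $m'L\subseteq M$; then $mm'L\subseteq mM\subseteq L$, and since $L/mm'L$ is finite by Lemma~\ref{lem:P}, so is $L/mM$. This is exactly the paper's use of $m_1$ and $m_2$. (Incidentally, the isogeny you produce goes $M\to L$ rather than $L\to M$; either swap the roles, or note that the third condition is symmetric.)

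Second, your reduction ``it suffices to produce an isogeny $L'\to M'$'' skips a step: from $L\to L'\to M'$ you still need an isogeny $M'\to M$, and you only have the quotient map in the wrong direction. The paper supplies this by choosing a non-zero $m_3\in\cZ$ annihilating $M_{\tors}$, so that multiplication by $m_3$ on $M$ factors through $M'\cong m_3M\hookrightarrow M$, and the inclusion $m_3M\subset M$ is an isogeny since $M/m_3M$ is finite by Lemma~\ref{lem:P}.
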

\begin{proof}
First, suppose that $f\colon L\rar M$ is an isogeny. Then $c_f=(L,\id,f)\colon
L\rightleftharpoons M$ is a commensurability.

Next, suppose that we have a commensurability 
$(X,f,g)\colon L\rightleftharpoons M$. Then the kernels and cokernels of $f$, $g$ are
finite $R$-modules, and so are $Z$-torsion modules by Lemma \ref{lem:fintors}.
They are therefore annihilated by the functor
$Q \otimes_{\cZ} -$, so the maps $\cQ\otimes_{\cZ}f$ and
$\cQ\otimes_{\cZ}g$ are isomorphisms.

Finally, suppose that we have an isomorphism
$\phi\colon \cQ\otimes_{\cZ}L\rightarrow \cQ\otimes_{\cZ}M$
of $A$-modules. It follows from Lemma \ref{lem:fintors} that the quotient
map $L\rar \bar{L}=L/L_{\tors}$ is an isogeny. Since $\bar{L}$ is
$\cZ$-torsion free, it embeds into $\cQ\otimes_{\cZ}L$, and similarly
for $\bar{M}$. By ``clearing denominators'', we can find non-zero elements
$m_1$, $m_2\in \cZ$ such that
$m_1\phi(\bar{L})$ is contained in $\bar{M}\subset \cQ\otimes_{\cZ} M$, and
$\phi(\bar{L})$ contains $m_2\bar{M}$. Since $\bar{M}/m_1m_2\bar{M}$ is finite
by Lemma \ref{lem:P}, it follows that $m_1\phi\colon \bar{L}\rar \bar{M}$
is an isogeny. Let $m_3\in \cZ$ be a non-zero element that annihilates $M_{\tors}$.
Then $m_3M$ is canonically isomorphic to $\bar{M}$, and since $M/m_3M$ is
finitely generated and torsion, Lemma \ref{lem:fintors} implies that the
embedding $\bar{M}\cong m_3M\subset M$ is an isogeny. The composition 
of the three isogenies $L\rar \bar{L}\rar \bar{M}\rar M$ is an isogeny by
Proposition \ref{prop:isog}, as claimed.
\end{proof}
\begin{lemma}\label{lem:welldefined}
Let $L$, $M$ be finitely generated $R$-modules, and let $(X,f,g)$ and $(Y,h,j)\colon
L\rightleftharpoons M$ be equivalent skew correspondences. Let $Q\otimes_{Z} f$
denote the map of $A$-modules $Q\otimes_Z L\rar Q\otimes_Z M$ induced by $f$,
and similarly for $g,h,j$. Then $(\cQ\otimes g)\circ (\cQ\otimes f)^{-1}=
(\cQ\otimes j)\circ (\cQ\otimes h)^{-1}$.
\end{lemma}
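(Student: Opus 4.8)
The plan is to reduce everything to one observation: the functor $\cQ\otimes_{\cZ}-$ sends every isogeny of finitely generated $R$-modules to an isomorphism of $A$-modules. Indeed, $\cQ$ is the localisation of the domain $\cZ$ at $\cZ\setminus\{0\}$, so $\cQ\otimes_{\cZ}-$ is exact; and if $\phi$ is an isogeny of finitely generated $R$-modules, then $\ker\phi$ and the cokernel of $\phi$ are finite, hence $\cZ$-torsion by Lemma \ref{lem:fintors}, hence annihilated by $\cQ\otimes_{\cZ}-$, so that $\cQ\otimes_{\cZ}\phi$ is an isomorphism. (This is essentially the middle paragraph of the proof of Theorem \ref{thm:parta}.) In particular, since $(X,f,g)$ and $(Y,h,j)$ are skew correspondences, the morphisms $f$ and $h$ are isogenies, so $\cQ\otimes_{\cZ}f$ and $\cQ\otimes_{\cZ}h$ are invertible and the two composites in the statement are genuine maps $\cQ\otimes_{\cZ}L\to\cQ\otimes_{\cZ}M$.

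Next I would unwind the hypothesis of equivalence. By Definition \ref{def:equiv} there is a commensurability $(W,p,q)\colon X\rightleftharpoons Y$ in the category of finitely generated $R$-modules with $fp=hq$ and $gp=jq$; here $W$ is finitely generated because it is an object of that category. By the definition of a commensurability, both $p$ and $q$ are isogenies, so by the observation of the previous paragraph $\cQ\otimes_{\cZ}p$ and $\cQ\otimes_{\cZ}q$ are isomorphisms as well.

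It then only remains to perform a formal computation. Applying the functor $\cQ\otimes_{\cZ}-$ to the two relations and writing $\bar f$ for $\cQ\otimes_{\cZ}f$, and similarly for $g,h,j,p,q$, one obtains $\bar f\bar p=\bar h\bar q$ and $\bar g\bar p=\bar j\bar q$, with $\bar f,\bar h,\bar p,\bar q$ invertible. The first relation gives $\bar p\bar q^{-1}=\bar f^{-1}\bar h$, and substituting this into the rearranged second relation $\bar j=\bar g\bar p\bar q^{-1}$ yields $\bar j\bar h^{-1}=\bar g\bar f^{-1}$, which is exactly the asserted identity. I do not expect any genuine obstacle here: the only step that uses the standing hypotheses is the initial claim that $\cQ\otimes_{\cZ}-$ inverts isogenies, which rests on Lemma \ref{lem:fintors} together with the exactness of localisation.
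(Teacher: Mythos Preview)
Your proposal is correct and follows essentially the same route as the paper: take an equivalence $(W,p,q)$, observe via Lemma~\ref{lem:fintors} and exactness of localisation that $\cQ\otimes_{\cZ}-$ turns the isogenies $p,q$ (and $f,h$) into isomorphisms, and then perform the evident algebraic manipulation with $\bar f\bar p=\bar h\bar q$, $\bar g\bar p=\bar j\bar q$. The only difference is cosmetic: the paper solves for $\bar f$ and $\bar g$ in terms of $\bar h,\bar j,\bar p,\bar q$ rather than forming $\bar p\bar q^{-1}$, but the computations are equivalent.
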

\begin{proof}
Let $(W,p,q)\colon X\rightleftharpoons Y$ be an equivalence between $(X,f,g)$
and $(Y,h,j)$. Since $p$ and $q$ are isogenies, Lemma \ref{lem:fintors}
implies that $Q\otimes_Zp$ and $Q\otimes_Zq$ are both
invertible. Moreover, we have
\beq
Q\otimes_Z f & = & (Q\otimes_Z h)\circ (Q\otimes_Z q)\circ (Q\otimes_Z p)^{-1},\\
Q\otimes_Z g & = & (Q\otimes_Z j)\circ (Q\otimes_Z q)\circ (Q\otimes_Z p)^{-1},\\
\eeq
so $(\cQ\otimes g)\circ (\cQ\otimes f)^{-1}=(\cQ\otimes j)\circ (\cQ\otimes h)^{-1}$.
\end{proof}\noindent
Let $\RMod$, respectively $\AMod$ denote the category of finitely generated
$R$-modules, respectively finitely generated $A$-modules.
By Lemma \ref{lem:welldefined}, we may define a functor $\cF$ from $\RMod_{\skew}$
to $\AMod$ by sending an $R$-module $L$ to the $A$-module $Q\otimes_Z L$, and
an equivalence class of skew correspondences represented by
$(X,f,g)\colon L\rightleftharpoons M$ to the map of $A$-modules
$(\cQ\otimes g)\circ (\cQ\otimes f)^{-1}\colon Q\otimes_Z L\rar Q\otimes_Z M$.
The verification that $\cF$ respects composition of morphisms, and thus
does define a functor, is easy and is left to the reader.
\begin{theorem}\label{thm:equivcat}
The functor $\cF\colon \RMod_{\skew}\rar \AMod$ is an equivalence of categories.
\end{theorem}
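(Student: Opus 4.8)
The plan is to show that $\cF$ is essentially surjective, full, and faithful, treating each property in turn and reducing everything to statements about clearing denominators and about the $Z$-torsion submodule.

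For \emph{essential surjectivity}, let $N$ be a finitely generated $A$-module. Choosing a finite generating set $n_1,\dots,n_r$ of $N$ over $A$, one forms the $R$-submodule $L=\sum_i Rn_i\subset N$. Since $R$ is left-noetherian, $L$ is a finitely generated $R$-module, and because $Q\cdot R=A$ one has $Q\otimes_Z L\to N$ surjective; since $N$ is generated over $A$ by elements of $L$ and $Q\otimes_Z L$ is obtained by inverting the central elements of $Z$, this map is an isomorphism of $A$-modules. Hence $\cF(L)\cong N$.

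For \emph{fullness}, let $L$, $M$ be finitely generated $R$-modules and let $\phi\colon Q\otimes_Z L\to Q\otimes_Z M$ be any $A$-module homomorphism; I must produce a skew correspondence $(X,f,g)\colon L\rightleftharpoons M$ with $\cF([(X,f,g)])=\phi$. Write $\bar L=L/L_{\tors}$, which by Lemma \ref{lem:fintors} is the quotient by a finite submodule, so the projection $L\to\bar L$ is an isogeny, and $\bar L$ embeds into $Q\otimes_Z L$; similarly $\bar M\hookrightarrow Q\otimes_Z M$. Because $\bar L$ is a finitely generated $R$-module sitting inside $Q\otimes_Z L$, and $\phi$ followed by $Q\otimes_Z L\cong Q\otimes_Z\bar L$ lands in $Q\otimes_Z\bar M$, clearing denominators gives a non-zero $m\in Z$ with $m\phi(\bar L)\subseteq\bar M$, i.e. an actual $R$-module map $\psi\colon\bar L\to\bar M$ lifting $m\phi$. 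Now take $X$ to be the fibre product (pullback) $L\times_{\bar L}\bar L'$ — more precisely, let $X=L$, let $f\colon X\to L$ be the identity, and let $g\colon X\to M$ be the composite $L\to\bar L\xrightarrow{\psi}\bar M\cong m_3 M\hookrightarrow M$ followed by inverting nothing; to undo the spurious factor $m$ one instead represents $\phi$ as $g\circ f^{-1}$ where $f$ is multiplication by $m$ on a copy of $L$. Concretely: set $X=L$, $f=m\cdot\id_L\colon X\to L$ (an isogeny, by Lemma \ref{lem:P}, since $L/mL$ is finite), and $g\colon X\to M$ the $R$-map obtained from $\psi$ after composing with $L\twoheadrightarrow\bar L$ and $\bar M\cong m_3M\hookrightarrow M$. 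Then $Q\otimes_Z f$ is multiplication by $m$ and $Q\otimes_Z g=m\phi$ under the identifications, so $(Q\otimes g)\circ(Q\otimes f)^{-1}=\phi$, as desired; that $f$ is an isogeny makes $(X,f,g)$ a skew correspondence.

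For \emph{faithfulness}, suppose $(X,f,g)$ and $(Y,h,j)\colon L\rightleftharpoons M$ are skew correspondences with $(Q\otimes g)(Q\otimes f)^{-1}=(Q\otimes j)(Q\otimes h)^{-1}$; I must show they are equivalent. Form the fibre product $W=X\times_M Y$ — or rather the fibre product over the common source — and use that $Q\otimes_Z-$ kills finite modules together with Theorem \ref{thm:parta} to see that the relevant comparison maps become isomorphisms after $\otimes Q$; then the difference of the two maps $W\to M$ (or $W\to L$) has image a $Z$-torsion, hence finite, submodule, and one passes to a further isogenous quotient on which the two maps literally agree. More carefully, let $W=X\times_M Y$ with projections $p_0,p_1$; by Proposition \ref{prop:fibre}, since $g$ and $h$ are isogenies (both being the second leg is not needed — here I use that $f,h$ are isogenies so $X,Y$ are isogenous to $L$, hence to each other) the structure maps $W\to L$ built from $fp_0$ and $hp_1$ agree after $\otimes Q$ by hypothesis and by Lemma \ref{lem:welldefined}'s computation; their equaliser is a sub-$R$-module of $W$ of finite index, which one checks is again a valid equivalence datum. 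The main obstacle is precisely this faithfulness step: arranging the bookkeeping of fibre products so that ``equal after inverting $Z$'' upgrades to ``equal after a further isogeny'', using Lemma \ref{lem:fintors} to know that the obstruction submodule is finite and Proposition \ref{prop:isog} to know the resulting maps are still isogenies. Fullness is a routine denominator-clearing argument once the passage to $\bar L$, $\bar M$ is set up, and essential surjectivity is immediate.
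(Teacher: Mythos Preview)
Your strategy --- essential surjectivity, fullness, faithfulness --- is exactly the paper's, and your essential surjectivity argument is fine. The other two steps contain genuine slips.

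\textbf{Fullness.} Your skew correspondence $(L,\, m\cdot\id_L,\, g)$ does not hit $\phi$. The map $g$ you build is the composite $L\twoheadrightarrow\bar L\xrightarrow{m\phi}\bar M\xrightarrow{\sim} m_3M\hookrightarrow M$, and the isomorphism $\bar M\cong m_3M$ is multiplication by $m_3$; after tensoring with $Q$ this contributes an extra factor, so $(Q\otimes g)\circ(Q\otimes f)^{-1}=m_3\phi$, not $\phi$. The fix is easy --- take $f=m_3m\cdot\id_L$ --- but as written the claim is false. The paper sidesteps this bookkeeping by assembling the skew correspondence as a composite $c_M^{-1}\circ c_\phi\circ c_L$ of three short pieces, each with its own scalar.

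\textbf{Faithfulness.} Your write-up oscillates between $X\times_M Y$ and ``the fibre product over the common source'', and at one point invokes ``$g$ and $h$ are isogenies'', which is false for skew correspondences: only the first legs $f,h$ are. The correct version of your own idea is this: form $W=X\times_L Y$ via the isogenies $f$ and $h$. By Proposition~\ref{prop:fibre} both projections $p_0,p_1$ are isogenies, and $fp_0=hp_1$ holds on the nose. The hypothesis $\cF(c)=\cF(d)$ gives $gp_0=jp_1$ after $\otimes_Z Q$, so the difference $gp_0-jp_1\colon W\to M$ has image contained in $M_{\tors}$, finite by Lemma~\ref{lem:fintors}; hence its kernel $W'$ has finite index in $W$, and $(W',\,p_0|_{W'},\,p_1|_{W'})$ is the desired equivalence. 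Once stated this way, your equaliser argument is in fact slightly more direct than the paper's proof, which takes $X\times_{L\oplus M}Y$ and must first reduce to the torsion-free case (multiplying all four maps by a suitable $m\in Z$) before identifying it with $X\times_L Y$.
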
\noindent
To prove the theorem, we will show in the next three lemmas that the functor
$\cF$ has dense image, is full, and is faithful.

\begin{lemma}\label{lem:dense}
Any element of $\AMod$ is isomorphic to $\cF(L)$ for some $R$-module $L$.
\end{lemma}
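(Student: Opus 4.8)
The plan is to show that every finitely generated $A$-module $V$ arises as $\cQ\otimes_\cZ L$ for a finitely generated $R$-module $L$, by exhibiting such an $L$ inside $V$ itself. First I would choose a finite set of $A$-module generators $v_1,\dots,v_n$ of $V$, and let $L$ be the $R$-submodule of $V$ generated by $v_1,\dots,v_n$. Since $R$ is a left-noetherian ring (this is part of Notation \ref{not:Z}), $L$ is a finitely generated $R$-module, so $L$ is an object of $\RMod$. It remains to produce a natural isomorphism of $A$-modules $\cF(L)=\cQ\otimes_\cZ L\xrightarrow{\sim} V$.

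There is an obvious $A$-linear map $\mu\colon \cQ\otimes_\cZ L\to V$ induced by the inclusion $L\hookrightarrow V$ and the $A$-module (hence $\cQ$-module) structure on $V$, sending $q\otimes x$ to $qx$. The next step is to check surjectivity: the image of $\mu$ is an $A$-submodule of $V$ containing $v_1,\dots,v_n$, hence all of $V$. The main work is injectivity. Here I would use that $Q\cdot R=A$: every element of $A$ can be written as $a=r/m$ with $r\in R$ and $m\in\cZ$ non-zero, so $A\cdot L = \cQ\cdot L$ as subsets of $V$, and in particular the $\cQ$-subspace of $V$ spanned by $L$ is all of $V$. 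Thus $\mu$ is surjective already on the level of the natural map $L\to V$ after inverting $\cZ$. For injectivity, note that $\cQ\otimes_\cZ L$ is obtained from $L$ by localising at the multiplicative set $\cZ\setminus\{0\}$ (since $\cZ$ is a domain with fraction field $\cQ$), so a general element of $\cQ\otimes_\cZ L$ has the form $(1/m)\otimes x$ with $m\in\cZ$ non-zero and $x\in L$; it lies in $\ker\mu$ iff $mx=0$ in $V$, but $x\in L\subset V$ and $V$ is a $\cQ$-vector space on which $m$ acts invertibly, so $x=0$. Hence $\ker\mu=0$ and $\mu$ is an isomorphism of $A$-modules.

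Therefore $\cF(L)\cong V$, which is what we wanted. The step I expect to require the most care is making precise the identification of $\cQ\otimes_\cZ L$ with the localisation $L[(\cZ\setminus\{0\})^{-1}]$ and the description of its elements as $(1/m)\otimes x$: one must use that $\cZ$ is an \emph{infinite domain} with field of fractions $\cQ$ (so that $\cZ\setminus\{0\}$ is a genuine multiplicative set and $\cQ$ is its localisation), and that $L$ embeds into $V$ so that the $\cZ$-action on $L$ is torsion-free enough for the vanishing argument to go through — but in fact injectivity of $\mu$ does not even require torsion-freeness of $L$, only that $L$ sits inside a $\cQ$-vector space, which is automatic here. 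Everything else is routine, and this proves Lemma \ref{lem:dense}.
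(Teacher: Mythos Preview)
Your proof is correct and follows the same approach as the paper: take $L$ to be the $R$-submodule of $V$ generated by a finite $A$-generating set, then identify $\cQ\otimes_\cZ L$ with $V$ via the multiplication map. One small remark: the appeal to $R$ being left-noetherian is unnecessary, since $L$ is finitely generated by construction---you already chose finitely many $R$-generators for it.
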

\begin{proof}
Let $V$ be an $A$-module with finite
generating set $S$. Let $L$ be the sub-$R$-module of $V$ generated
by $S$ over $R$. Then the $A$-module $\cF(L)$ is isomorphic to $V$.
\end{proof}

\begin{lemma}\label{lem:full}
Let $L$, $M$ be finitely generated $R$-modules, and let $\phi\colon \cF(L)\rar \cF(M)$
be a morphism of $A$-modules. Then there exists a skew correspondence
$c\colon L\rightleftharpoons M$ such that $\cF(c)=\phi$.
\end{lemma}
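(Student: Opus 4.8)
The plan is to realise the $A$-module map $\phi\colon \cQ\otimes_Z L\to\cQ\otimes_Z M$ on the level of $R$-modules up to isogeny by ``clearing denominators'', and then to package the data into a skew correspondence $(X,f,g)$ in which $f$ is an honest isomorphism, so that applying $\cF$ gives back $\phi$. First I would replace $L$ and $M$ by their images $\bar L=L/L_{\tors}$ and $\bar M=M/M_{\tors}$; by Lemma~\ref{lem:fintors} the quotient maps $L\to\bar L$ and $M\to\bar M$ are isogenies, $\bar L$ and $\bar M$ are $Z$-torsion free, and they embed into $\cQ\otimes_Z L=\cF(\bar L)=\cF(L)$ and $\cQ\otimes_Z M=\cF(M)$ respectively. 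Since $\bar L$ is finitely generated over $R$ and its image $\phi(\bar L)\subset\cQ\otimes_Z M$ is therefore finitely generated over $R$ as well, there is a non-zero $m\in Z$ with $m\phi(\bar L)\subset\bar M$; set $\psi=m\phi\colon\bar L\to\bar M$, an $R$-module homomorphism whose kernel is $Z$-torsion (hence finite, by Lemma~\ref{lem:fintors}) and whose cokernel $\bar M/\psi(\bar L)$ is a finitely generated $Z$-torsion $R$-module (the quotient $\cQ\otimes_Z\psi$ is an isomorphism since $\phi$ is), hence also finite. Thus $\psi$ is an isogeny.

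Next I would assemble the skew correspondence. Let $f\colon L\to L$ be the identity and take $g$ to be the composite isogeny $L\twoheadrightarrow\bar L\xrightarrow{\psi}\bar M\hookrightarrow M$ — wait, this does not give $\cF$-value $\phi$ on the nose because of the factor $m$ and the quotient/inclusion maps, so instead I would take $X=L$, let $f=\mathrm{id}_L$ (an isogeny trivially), and let $g\colon L\to M$ be $\psi$ composed with the projection $L\to\bar L$ on the source and the inclusion $\bar M\cong m_3 M\subset M$ on the target, as in the proof of Theorem~\ref{thm:parta}; this is a composite of isogenies, hence an isogeny. But now $\cF$ of the skew correspondence $(L,\mathrm{id},g)$ is $\cQ\otimes_Z g$, which equals $m\phi$ up to the canonical identifications, not $\phi$. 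To fix the scalar, I would instead take $X$ to be $L$ and use the skew correspondence $(L,\mu_m,g)$, where $\mu_m\colon L\to L$ is multiplication by $m$ (an isogeny, since $m\ne0$ and $L$ is finitely generated over the residually finite $Z$, so $L/mL$ is finite by Lemma~\ref{lem:P} and the kernel of $\mu_m$ lies in $L_{\tors}$, hence is finite by Lemma~\ref{lem:fintors}). Then $\cF((L,\mu_m,g))=(\cQ\otimes g)\circ(\cQ\otimes\mu_m)^{-1}=(m\phi)\circ(m\cdot\mathrm{id})^{-1}=\phi$, as desired.

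The main obstacle — really the only subtlety — is bookkeeping the canonical identifications: one must check that under $\cF(L)=\cQ\otimes_Z L$ the composite $\bar M\cong m_3M\subset M$ induces the identity on $\cQ\otimes_Z M$ after inverting the isogeny, and that $\cQ\otimes_Z\mu_m$ is genuinely multiplication by $m$ on $\cQ\otimes_Z L$, so the scalars cancel exactly; this is the same routine verification already used in Theorem~\ref{thm:parta} and Lemma~\ref{lem:welldefined}, and it goes through because $\cQ\otimes_Z-$ kills all finite $R$-modules (Lemma~\ref{lem:fintors}) and is $Q$-linear. A cleaner packaging, which I would probably adopt in the write-up, is: apply Theorem~\ref{thm:parta}'s construction directly to the $A$-module isomorphism between $\cF(L)$ and its image, producing an isogeny $h\colon L'\to M$ from some finitely generated $R$-module $L'$ together with an isogeny $L'\to L$ realising $\phi^{-1}$ after tensoring — but the multiplication-by-$m$ trick above is the most economical, so that is the route I would take. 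Everything else (finiteness of the relevant kernels and cokernels, that a composite of isogenies is an isogeny) is immediate from Proposition~\ref{prop:isog}, Lemma~\ref{lem:P}, and Lemma~\ref{lem:fintors}.
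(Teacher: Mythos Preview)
Your overall strategy --- pass to the torsion-free quotients $\bar L$, $\bar M$, clear denominators to get an $R$-map $\psi=m\phi\colon\bar L\to\bar M$, and balance the stray scalar by putting multiplication by $m$ on the left leg --- is exactly the paper's idea. Two points need correction, however.

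First, you write that ``$\cQ\otimes_Z\psi$ is an isomorphism since $\phi$ is'', and use this to conclude that $\psi$ (and hence $g$) is an isogeny. But $\phi$ is \emph{not} assumed to be an isomorphism: the lemma asserts fullness, so $\phi$ is an arbitrary $A$-module map. In general $\ker\psi=\ker\phi\cap\bar L$ and $\bar M/\psi(\bar L)$ can be infinite. Fortunately this claim is irrelevant: a skew correspondence only requires the \emph{left} leg to be an isogeny, and you verify this for $\mu_m$ correctly. Simply drop the assertion that $\psi$ and $g$ are isogenies.

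Second, the $m_3$ bookkeeping does not come out to the identity. The composite $\bar M\xrightarrow{\sim}m_3M\hookrightarrow M$ is the map $\bar x\mapsto m_3x$, which after applying $\cQ\otimes_Z-$ becomes multiplication by $m_3$ on $\cQ\otimes_Z M$, not the identity. Hence $\cQ\otimes_Z g=m_3m\phi$ and $\cF\bigl((L,\mu_m,g)\bigr)=m_3\phi$, not $\phi$. The fix is immediate: replace $\mu_m$ by $\mu_{mm_3}$.

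The paper avoids both issues by working compositionally rather than forcing everything into a single correspondence with source $L$: it sets $c_\phi=(\bar L,\,m,\,m\phi)\colon\bar L\rightleftharpoons\bar M$ (a skew correspondence, with no claim that $m\phi$ is an isogeny) and then takes $c=c_M^{-1}\circ c_\phi\circ c_L$, where $c_L\colon L\rightleftharpoons\bar L$ and $c_M\colon M\rightleftharpoons\bar M$ are the commensurabilities coming from the quotient maps. Inverting $c_M$ handles the passage from $\bar M$ back to $M$ without the $m_3$ detour.
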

\begin{proof}
Let $\bar{L}$ be the image of $L$ in $Q\otimes_Z L$, and let $\bar{M}$
be the image of $M$ in $Q\otimes_Z M$. By Lemma \ref{lem:fintors},
the natural map $f\colon L\rar Q\otimes_Z L$ gives rise to a commensurability
$c_L=(L,\id,f)\colon L\rightleftharpoons \bar{L}$, and similarly we have a commensurability
$c_M\colon M\rightleftharpoons \bar{M}$. Since $\bar{L}$ and $\bar{M}$
are finitely generated as $R$-modules, and since $\bar{M}$ generates
$Q\otimes_Z M$ over $Q$, we may choose a non-zero
$m\in Z$ such that $m\phi(\bar{L})$ is contained in $\bar{M}$. Let $g$ be the inclusion
$m\phi(\bar{L})\subset \bar{M}$, and define the correspondence
$c_{\phi}=(\bar{L},m,gm\phi)\colon \bar{L}\rightleftharpoons\bar{M}$. It follows
from Lemma \ref{lem:P} that $c_{\phi}$ is a skew correspondence.
By Proposition \ref{prop:composition}, the composition
$c=c_M^{-1}\circ c_{\phi}\circ c_L\colon L\rightleftharpoons M$ is also a skew
correspondence, and it is easy to see that $\cF(c)=\phi$.
\end{proof}

\begin{lemma}\label{lem:faithful}
Let $L$, $M$ be finitely generated $R$-modules, and let $c$, $d\colon L\rightleftharpoons M$
be two skew correspondences such that $\cF(c)=\cF(d)$. Then $c$ and $d$ are
equivalent.
\end{lemma}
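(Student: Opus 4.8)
The plan is to reduce faithfulness to a statement about a single ``common refinement'' of the two skew correspondences, using the finite-length/torsion machinery already developed. Write $c=(X,f,g)\colon L\rightleftharpoons M$ and $d=(Y,h,j)\colon L\rightleftharpoons M$, so that $f$ and $h$ are isogenies, and the hypothesis $\cF(c)=\cF(d)$ says that, after applying $Q\otimes_Z-$, we have $(Q\otimes g)(Q\otimes f)^{-1}=(Q\otimes j)(Q\otimes h)^{-1}$ as maps $Q\otimes_Z L\to Q\otimes_Z M$. First I would form the fibre product $W=X\times_L Y$ over the diagram $X\xrightarrow{f}L\xleftarrow{h}Y$, with projections $p\colon W\to X$ and $q\colon W\to Y$. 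Since $h$ is an isogeny, Proposition~\ref{prop:fibre}(a) shows $p$ is an isogeny; since $f$ is an isogeny, Proposition~\ref{prop:fibre}(c) shows $q$ is an isogeny. Thus $(W,p,q)\colon X\rightleftharpoons Y$ is a commensurability, and by construction $fp=hq$. To conclude that it is an equivalence between $c$ and $d$, it remains to check that $gp=jq$ as morphisms $W\to M$.

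The morphisms $gp$ and $jq$ need not be literally equal, so the second step is to compare them after inverting isogenies, and then use that the difference is a ``finite'' map to kill it. Applying $Q\otimes_Z-$ and using $Q\otimes fp=Q\otimes hq$ together with the hypothesis, one gets $(Q\otimes g)(Q\otimes p)=(Q\otimes g)(Q\otimes f)^{-1}(Q\otimes f)(Q\otimes p)=(Q\otimes j)(Q\otimes h)^{-1}(Q\otimes h)(Q\otimes q)=(Q\otimes j)(Q\otimes q)$, i.e. $Q\otimes(gp)=Q\otimes(jq)$. Hence the $R$-module map $gp-jq\colon W\to M$ becomes zero after tensoring with $Q$ over $Z$, which means its image lies in $M_{\tors}$ (every element of the image is killed by a non-zero element of $Z$, because $W$ is finitely generated so a single non-zero $m\in Z$ works). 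By Lemma~\ref{lem:fintors} the image $U=\mathrm{im}(gp-jq)$ is a finite sub-$R$-module of $M$.

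The point where the previous display does not yet finish the job is that $gp=jq$ is what we need, and $gp-jq$ landing in the finite module $U$ is not literally ``$gp=jq$''. To fix this, instead of using $W$ directly, I would pass to the sub-$R$-module $W'=\{w\in W: (gp)(w)=(jq)(w)\}=\ker(gp-jq)$, which is the fibre product of $W\xrightarrow{gp-jq}M\xleftarrow{0}0$. Since $U$ is finite, the quotient $W/W'\cong U$ is finite, so the inclusion $W'\hookrightarrow W$ is an isogeny by Lemma~\ref{lem:fintors}; composing with $p$ and $q$ and using Proposition~\ref{prop:isog}, the restrictions $p'=p|_{W'}$ and $q'=q|_{W'}$ are isogenies $W'\to X$ and $W'\to Y$. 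Thus $(W',p',q')\colon X\rightleftharpoons Y$ is a commensurability satisfying $fp'=hq'$ (restriction of $fp=hq$) and now $gp'=jq'$ by construction of $W'$. This is exactly an equivalence between $c$ and $d$ in the sense of Definition~\ref{def:equiv}, so $c$ and $d$ are equivalent, completing the proof.

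The main obstacle is precisely the passage from ``$gp$ and $jq$ agree rationally'' to ``they agree on a large enough subobject''; the technical content is entirely that the image of $gp-jq$ is $Z$-torsion and finitely generated, hence finite by Lemma~\ref{lem:fintors}, so that cutting down to $W'=\ker(gp-jq)$ changes $W$ only by a finite module. Everything else is a diagram chase with the fibre-product and isogeny formalism of Section~\ref{sec:2}. One should double-check that $W'$ is genuinely a sub-$R$-module (it is, since $gp-jq$ is $R$-linear) and that $W$ is finitely generated over $R$ (it is a sub-$R$-module of $X\times Y$ in $\Grp$, and one invokes that $\RMod$ is closed under fibre products as noted after Notation~\ref{not:Z}, using that $R$ is left-noetherian).
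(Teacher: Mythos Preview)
Your argument is correct. In fact the submodule $W'=\ker(gp-jq)$ of $W=X\times_L Y$ is literally the fibre product $X\times_{L\oplus M}Y$, which is exactly the object the paper uses. So the two proofs produce the same equivalence $(W',p',q')\colon X\rightleftharpoons Y$; they differ only in how they verify that $W'$ gives a commensurability. The paper first reduces to the case where the images of $f,g,h,j$ are $Z$-torsion free, then further to $X,Y$ torsion free, and in that situation shows that the inclusion $X\times_{L\oplus M}Y\hookrightarrow X\times_L Y$ is an isomorphism; Proposition~\ref{prop:fibre} then finishes. You instead observe directly that this inclusion has finite cokernel, because $W/W'\cong\operatorname{im}(gp-jq)\subset M_{\tors}$ is finite by Lemma~\ref{lem:fintors}, so $W'\hookrightarrow W$ is an isogeny and you may compose with the isogenies $p,q$ from Proposition~\ref{prop:fibre}. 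Your route is arguably cleaner: it avoids the two-step torsion reduction entirely, at the price of a short extra argument with the difference map $gp-jq$. Both approaches rest on the same essential point, namely that the discrepancy between $gp$ and $jq$ is controlled by $M_{\tors}$ once $\cF(c)=\cF(d)$.
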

\begin{proof}
Let $c=(X,f,g)$, and $d=(Y,h,j)$. We will show that $c$ and $d$ are equivalent
by showing that the fibre product $(X\times_{L\oplus M}Y,p_0,p_1)\colon X\rightleftharpoons Y$
is a commensurability.

First, assume that the images of $f$, $g$, $h$, and $j$ are $Z$-torsion free.
Then $f$ and $g$ factor through $X/X_{\tors}$, and similarly for $h$ and $j$. By Lemma
\ref{lem:fintors}, the quotient maps $X\rar X/X_{\tors}$ and
$Y\rar Y/Y_{\tors}$ are isogenies, so after replacing $c$ and
$d$ by equivalent commensurabilities, we may
assume that $X$ and $Y$ are $Z$-torsion free. It then follows from
Lemma \ref{lem:fintors} that
$f$, $g$, $h$, and $j$ are injective. Since $\cF(c)=\cF(d)$, we have
$$
(\cQ\otimes_{\cZ}g)\circ (\cQ\otimes_{\cZ}f)^{-1}
=  (\cQ\otimes_{\cZ}j)\circ (\cQ\otimes_{\cZ}h)^{-1},
$$
and it follows that the canonical injection
$X\times_{L\oplus M}Y\rar X\times_L Y$ is an isomorphism.
By Proposition \ref{prop:composition}, the fibre product
$(X\times_L Y,p_0,p_1)\colon X\rightleftharpoons Y$ of the diagram
$X\rar M\leftarrow Y$ is a commensurability, which proves this special case
of the lemma.

We now prove the general case. By applying Lemma \ref{lem:fintors}
with $U=f(X)_{\tors}$, and similarly for $g$, $h$, and $j$, we may choose a
non-zero $m\in Z$ such that the images of $mf$, $mg$, $mh$, and $mj$ are
$Z$-torsion free. It is easy to see that $c$ is equivalent to
$(X,mf,mg)$, and $d$ is equivalent to $(Y,mh,mj)$.
So the general case follows from the special case above.
\end{proof}

\begin{proof}[Proof of Theorem \ref{thm:equivcat}]
The result follows by combining Lemmas
\ref{lem:dense}, \ref{lem:full}, and \ref{lem:faithful}.
\end{proof}\noindent
Recall from Theorem \ref{thm:G_L} that if $L$ is a finitely generated
$R$-module, we let $\cG_L$ denote the group
of equivalence classes of commensurabilities $L\rightleftharpoons L$ under
composition. It may be viewed as the full subgroupoid of $\RMod_{\com}$
whose only object is $L$.

\begin{corollary}\label{cor:G_L}
Let $L$ be a finitely generated $R$-module. Then the map
$\cG_L \rar \Aut_A(Q\otimes_Z L)$, $(X,f,g)\mapsto (Q\otimes g)\circ (Q\otimes f)^{-1}$
is a group isomorphism.
\end{corollary}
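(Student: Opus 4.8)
The plan is to deduce the corollary directly from Theorem~\ref{thm:equivcat}. By definition, $\cG_L=\Hom_{\RMod_{\com}}(L,L)$, so it is a subset of $\Hom_{\RMod_{\skew}}(L,L)$; and by Theorem~\ref{thm:equivcat} the functor $\cF$ restricts to a bijection $\Hom_{\RMod_{\skew}}(L,L)\to\Hom_A(Q\otimes_Z L,\,Q\otimes_Z L)=\End_A(Q\otimes_Z L)$. The first thing I would do is make this identification explicit, noting that on the subset $\cG_L$ it sends the class of $(X,f,g)$ to $(Q\otimes g)\circ(Q\otimes f)^{-1}$, exactly the map in the statement, and that it is a homomorphism of monoids (composition is respected because $\cF$ is a functor, and identities go to identities).

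Next I would pin down that this monoid homomorphism restricts to a group isomorphism onto $\Aut_A(Q\otimes_Z L)\subset\End_A(Q\otimes_Z L)$. For this it suffices to check that the image of $\cG_L$ is precisely the set of invertible elements of $\End_A(Q\otimes_Z L)$. One inclusion: if $c=(X,f,g)\in\cG_L$ is a commensurability, then both $f$ and $g$ are isogenies, so by Lemma~\ref{lem:fintors} (the kernels and cokernels are finite, hence $Z$-torsion, hence killed by $Q\otimes_Z-$) both $Q\otimes f$ and $Q\otimes g$ are isomorphisms, whence $\cF(c)$ is invertible. Conversely, if $\phi\in\End_A(Q\otimes_Z L)$ is invertible, then by Lemma~\ref{lem:full} there is a skew correspondence $c\colon L\rightleftharpoons L$ with $\cF(c)=\phi$; since $\cF(c)$ is an isomorphism and $\cF$ is an equivalence, $c$ is an isomorphism in $\RMod_{\skew}$, hence lies in the maximal subgroupoid, which by Proposition~\ref{prop:maxgroupoid} is $\RMod_{\com}$, so $c\in\cG_L$. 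Thus the map $\cG_L\to\Aut_A(Q\otimes_Z L)$ is a surjective monoid homomorphism between groups that is also injective (being the restriction of the injective $\cF$ on Hom-sets), hence a group isomorphism.

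The only point requiring a little care — the ``main obstacle'', such as it is — is verifying that the formula $(X,f,g)\mapsto(Q\otimes g)\circ(Q\otimes f)^{-1}$ is well-defined on equivalence classes and is multiplicative; but both facts are already packaged in Lemma~\ref{lem:welldefined} and in the construction of the functor $\cF$ preceding Theorem~\ref{thm:equivcat}, so nothing new is needed. Concretely, one writes: $\cG_L=\Hom_{\RMod_{\com}}(L,L)$ is, by Proposition~\ref{prop:maxgroupoid}, the group of isomorphisms in $\Hom_{\RMod_{\skew}}(L,L)$; the equivalence $\cF$ carries this group isomorphically onto the group of isomorphisms in $\Hom_A(Q\otimes_Z L,Q\otimes_Z L)$, which is $\Aut_A(Q\otimes_Z L)$; and on classes of commensurabilities $\cF$ is given by the stated formula. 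This completes the proof.
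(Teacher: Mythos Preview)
Your proposal is correct and takes essentially the same approach as the paper: both use Proposition~\ref{prop:maxgroupoid} to identify $\cG_L$ with the automorphism group of $L$ in $\RMod_{\skew}$, and then use the equivalence $\cF$ of Theorem~\ref{thm:equivcat} to carry this onto $\Aut_A(Q\otimes_Z L)$. The paper's version is simply more terse, packaging your explicit two-inclusion check into the single categorical observation that an equivalence of categories restricts to an equivalence of maximal subgroupoids.
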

\begin{proof}
By Proposition \ref{prop:maxgroupoid}, the category $\RMod_{\com}$ is the maximal
subgroupoid of $\RMod_{\skew}$.
So Theorem \ref{thm:equivcat} implies that 
the functor $\cF$ induces an equivalence of categories from $\RMod_{\com}$
to the category whose objects are the finitely
generated $A$-modules, and whose morphisms are the $A$-module isomorphisms.
The corollary follows by restricting $\cF$ to the full subgroupoid $\cG_L$ of
$\RMod_{\skew}$.
\end{proof}

\section{Automorphisms of commensurabilities}\label{sec:autcomm}\noindent
It is in the present section that we construct ring and group
commensurabilities out of module commensurabilities. Here
we retain the assumptions of Notation \ref{not:Z}.

Let $c=(N,f,g)\colon$ $L\rightleftharpoons M$ be a correspondence of $R$-modules.
In the introduction we defined the endomorphism ring of $c$ to be
$\End c=\{(\lambda,\nu,\mu)\in(\End L)\times(\End N)\times(\End M):\lambda f=f\nu$,
$\mu g=g\nu\}$. We also recall the correspondence
$\ee(c)=(\End c,p_0,p_1)\colon \End L\rightleftharpoons \End M$, given by sending
$(\lambda,\nu,\mu)\in \End c$ to $\lambda$ and $\mu$,
respectively, and the induced correspondence of automorphism groups
$\aa(c)\colon \Aut L\rightleftharpoons \Aut M$.
If $f\colon L\rar M$ is an isogeny, we let $c_f$ be the commensurability
$(L,\id,f)\colon L\rightleftharpoons M$, as in Section \ref{sec:2}.

\begin{lemma}\label{lem:endisog}
Let $f\colon L\rar M$ be an
isogeny of finitely generated $R$-modules. Then the correspondence
$\ee(c_f)\colon \End L\rightleftharpoons \End M$ is a commensurability of rings.
\end{lemma}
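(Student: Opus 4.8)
The plan is to analyze the correspondence $\ee(c_f)=(\End c_f,p_0,p_1)$ explicitly. Since $c_f=(L,\id,f)$, an element of $\End c_f$ is a triple $(\lambda,\nu,\mu)$ with $\lambda=\nu$ (from $\lambda\cdot\id=\id\cdot\nu$) and $\mu f=f\lambda$. Thus $\End c_f$ is naturally identified with the ring $S=\{(\lambda,\mu)\in(\End L)\times(\End M):\mu f=f\lambda\}$, and the two structure maps are $p_0\colon S\to\End L$, $(\lambda,\mu)\mapsto\lambda$, and $p_1\colon S\to\End M$, $(\lambda,\mu)\mapsto\mu$. I must show both $p_0$ and $p_1$ are ring isogenies, i.e.\ isogenies of the underlying additive groups.

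First I would treat $p_0$. Its kernel consists of pairs $(0,\mu)$ with $\mu f=0$, i.e.\ $\mu$ kills the image $fL$; since $(M:fL)$ is finite, such $\mu$ is determined by a homomorphism $M/fL\to M$, so $\ker p_0\hookrightarrow\Hom_R(M/fL,M)$, which is finite because $M/fL$ is a finite $R$-module (being finitely generated and torsion, it lies in $M_{\tors}$ by Lemma~\ref{lem:fintors}, hence is finite, and $M$ is finitely generated so $\Hom$ into it from a finite module is finite). For cokernel finiteness of $p_0$, I would use that $f$ is an isogeny to produce an isogeny $f'\colon M\to L$ in the other direction — concretely, choose $m\in Z$ nonzero annihilating $\ker f$ and $M/fL$ and lift $\id_M$ appropriately — or, more cleanly, invoke Corollary~\ref{cor:G_L}/Theorem~\ref{thm:equivcat}: after applying $Q\otimes_Z-$, $f$ becomes an isomorphism, so $Q\otimes_Z p_0$ is an isomorphism of the finite-dimensional $Q$-algebras $Q\otimes_Z(\End L)$ etc.\ onto $Q\otimes_Z S$; combined with the fact (Lemma~\ref{lem:P}) that $\End L$, $\End M$, $S$ are finitely generated $Z$-modules sitting in their rationalizations as lattices of the same rank, finiteness of the cokernel of $p_0$ on additive groups follows by a clearing-denominators argument identical to the one in the proof of Theorem~\ref{thm:parta}. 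The map $p_1$ is handled symmetrically: $\ker p_1$ consists of $(\lambda,0)$ with $f\lambda=0$, so $\lambda$ lands in $\ker f$, giving $\ker p_1\hookrightarrow\Hom_R(L,\ker f)$, finite since $\ker f$ is finite; and the cokernel is again controlled after rationalizing.

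The step I expect to be the main obstacle is verifying that $\End L$ is a finitely generated $Z$-module so that the lattice/clearing-denominators argument applies — but this follows from Lemma~\ref{lem:P} together with left-noetherianity: $\End_R(L)\subset\End_Z(L)$, and $L$ being finitely generated over $R$ with $Q\cdot R=A$ of finite $Q$-dimension forces $Q\otimes_Z\End_R(L)=\End_A(Q\otimes_Z L)$ to be finite-dimensional, while $\End_R(L)$ is $Z$-torsion-free modulo finite torsion and embeds as a sub-$Z$-module of a finite-dimensional $Q$-vector space, so Lemma~\ref{lem:P} gives the needed finiteness of $\End_R(L)/m\End_R(L)$. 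Once that is in place, everything reduces to: two sub-$Z$-lattices $S$ and $p_i(S)$ of the same finite-dimensional $Q$-vector space with $Q\cdot p_i(S)$ equal to the whole space (from $Q\otimes p_i$ surjective), hence commensurable, so $p_i$ is an isogeny of additive groups. Therefore $\ee(c_f)$ is a ring commensurability.
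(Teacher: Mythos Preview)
Your first approach---constructing an explicit partial inverse to $f$ by choosing a nonzero $m\in Z$ annihilating both $\ker f$ and $M/fL$---is correct and is precisely what the paper does. Concretely, the paper takes $m_1$ killing $L_{\tors}$ (hence $\ker f$) and $m_2$ killing $M/fL$, and for $\mu\in\End M$ sets $\lambda(x)=f^{-1}(m_1m_2\,\mu(f(x)))$ to exhibit $(\lambda,m_1m_2\mu)\in S$, so that $m_1m_2\End M\subset p_1(S)$; Lemma~\ref{lem:P} then gives finite index. The argument for $p_0$ is symmetric.

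Your second approach, which you call ``more clean'' and commit to in your final paragraph, has a genuine gap. The assertion that two sub-$Z$-modules of a finite-dimensional $Q$-vector space with the same $Q$-span are automatically commensurable is false: take $Z=\Z$ and $\Z\subset\Z[1/2]\subset\Q$. You try to repair this by arguing that $\End_R L$ is finitely generated over $Z$, but your argument only establishes that $\End_R(L)/m\End_R(L)$ is finite (which is indeed all that Lemma~\ref{lem:P} yields), not finite generation. In fact $\End_R L$ need \emph{not} be finitely generated over $Z$, since $R$ itself need not be: with $Z=\Z$, $A=\Q$, $R=\Z[1/2]$, $L=R$ one gets $\End_R L\cong R$, not finitely generated over~$\Z$. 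So the rationalisation route does not close; you must carry out your first approach explicitly, as the paper does.

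A minor point on your kernel computation for $p_0$: the phrase ``$M/fL$ lies in $M_{\tors}$ by Lemma~\ref{lem:fintors}'' is meaningless, since $M/fL$ is a quotient of $M$, not a submodule. Rather, $M/fL$ is finite directly by the isogeny hypothesis. Finiteness of $\Hom_R(M/fL,M)$ then follows because any such homomorphism has finite image, which by Lemma~\ref{lem:fintors} is contained in the finite module $M_{\tors}$; this is exactly the paper's argument.
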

\begin{proof}
We first show that $p_1$ has finite kernel. We have
$$
\ker p_1 = \{(\lambda,\lambda,0)\in \End L\times\End L\times \End M: f\lambda=0\}
\cong \Hom(L,\ker f),
$$
which is finite since $L$ is finitely generated and $\ker f$ is finite by assumption.

Next, we show that the image of $p_1$ has finite additive index in $\End M$.
The modules $L_{\tors}$ and $M/f(L)$ are finite, so by Lemma \ref{lem:fintors}
there exist non-zero $m_1$, $m_2\in Z$ such that $m_1$ annihilates $L_{\tors}$, and
$m_2$ annihilates $M/f(L)$. Thus, $f\colon m_1L\rar m_1M$ is injective, and the image
contains $m_1m_2M$, so $f^{-1}$ defines a homomorphism
$m_1m_2M\rar m_1L$. Given $\mu\in \End M$, we may therefore define
$\lambda \colon L\rar L, x\mapsto f^{-1}(m_1m_2\mu(f(x)))$, which has the property
that $(\lambda,\lambda,m_1m_2\mu)\in \End c_f$. So the image of $p_1$ contains
$m_1m_2\End M$, which has finite additive index in $\End M$ by Lemma \ref{lem:P}.
This proves that $p_1$ is an isogeny.

We now show that the image of $p_0$ has finite additive index in $\End L$.
Given any $\lambda\in \End L$, we may define $\mu\colon M\rar M$,
$y\mapsto f(\lambda(f^{-1}(m_1m_2y)))$, where $m_1,m_2$ are as before.
We then have
$(m_1m_2\lambda,m_1m_2\lambda,\mu)\in\End c_f$. So the image of $p_0$ contains $m_1m_2\End L$,
which has finite additive index in $\End L$ by Lemma \ref{lem:P}.

Finally, we show that $p_0$ has finite kernel.
We have
\begin{align*}
\ker p_0 & =\{(0,0,\mu)\in \End L\times\End L\times \End M: \mu f=0\}\\
& \cong \Hom(M/f(L),M)\cong
\Hom(M/f(L),M_{\tors}),
\end{align*}
where the last isomorphism follows from Lemma \ref{lem:fintors} and the assumption
that $M/f(L)$ is finite. Invoking Lemma \ref{lem:fintors} again,
it follows that $\ker p_0$ is finite, so $p_0$ is an isogeny.
\end{proof}

\begin{theorem}\label{thm:partb}
Let $L$, $M$ be finitely
generated $R$-modules. Then
for any commensurability $c=(X,f,g)\colon L\rightleftharpoons M$,
the correspondence $\ee(c)\colon \End L\rightleftharpoons \End M$
is a ring commensurability, and the induced correspondence
$\aa(c)\colon$ $\Aut L\rightleftharpoons \Aut M$
is a group commensurability.
\end{theorem}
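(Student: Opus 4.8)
The plan is to factor $c$ through its middle object and so reduce to the case of a single isogeny, which is Lemma~\ref{lem:endisog}. Since $c=(X,f,g)\colon L\rightleftharpoons M$ is a commensurability of $R$-modules, both $f\colon X\to L$ and $g\colon X\to M$ are isogenies, so the correspondences $c_f=(X,\id,f)\colon X\rightleftharpoons L$ and $c_g=(X,\id,g)\colon X\rightleftharpoons M$ are themselves commensurabilities. The fibre product of $X\stackrel{\id}{\rightarrow}X\stackrel{\id}{\leftarrow}X$ is just $X$, so Definition~\ref{def:compos} gives $c=c_g\circ c_f^{-1}$.

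I would next observe that applying $\ee$ is compatible with this factorisation. Writing out the definitions, the ring $\End c=\{(\lambda,\nu,\mu):\lambda f=f\nu,\ \mu g=g\nu\}$ is canonically the fibre product of the two projections $\End(c_f)\to\End X$ and $\End(c_g)\to\End X$ (both recording the $\nu$-coordinate), and under this identification the two structure maps $(\lambda,\nu,\mu)\mapsto\lambda$ and $(\lambda,\nu,\mu)\mapsto\mu$ of $\ee(c)$ become exactly those of $\ee(c_g)\circ\ee(c_f)^{-1}$; hence $\ee(c)=\ee(c_g)\circ\ee(c_f)^{-1}$ as correspondences of rings. By Lemma~\ref{lem:endisog} both $\ee(c_f)$ and $\ee(c_g)$ are ring commensurabilities, and the inverse of a commensurability is visibly again a commensurability, so Proposition~\ref{prop:composition}, applied in the category of rings, yields that $\ee(c)$ is a ring commensurability. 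In particular its two structure maps $p_0\colon\End c\to\End L$ and $p_1\colon\End c\to\End M$ are ring isogenies.

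For $\aa(c)$, I would note that a unit of the subring $\End c$ of $(\End L)\times(\End X)\times(\End M)$ has every coordinate a unit, so $p_0$ and $p_1$ restrict to group homomorphisms $(\End c)^\times\to(\End L)^\times=\Aut L$ and $(\End c)^\times\to(\End M)^\times=\Aut M$, and these are precisely the structure maps of $\aa(c)$. Applying Theorem~\ref{thm:O} to the ring isogenies $p_0$ and $p_1$ shows that each of these maps of unit groups is a group isogeny, so $\aa(c)\colon\Aut L\rightleftharpoons\Aut M$ is a group commensurability.

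The only point needing care is the identification in the second paragraph: matching the object of $\ee(c_g)\circ\ee(c_f)^{-1}$ (a fibre product taken over $\End X$) with $\End c$, and checking that the two outer maps agree. This is a routine unwinding of Definition~\ref{def:compos}, and it involves no new analytic input—the finiteness of all the kernels and cokernels in sight is already contained in Lemma~\ref{lem:endisog} and Theorem~\ref{thm:O}—so I do not anticipate a real obstacle. (One could alternatively bypass the composition formalism and prove directly, by ``clearing denominators'' as in the proof of Lemma~\ref{lem:endisog}, that $p_0$ and $p_1$ are ring isogenies, but this would essentially duplicate that argument.)
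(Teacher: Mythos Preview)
Your proof is correct and follows the same approach as the paper: factor $c$ through $c_f$ and $c_g$, identify $\ee(c)$ with $\ee(c_g)\circ\ee(c_f)^{-1}$, invoke Lemma~\ref{lem:endisog} and Proposition~\ref{prop:composition}, then apply Theorem~\ref{thm:O} for the unit groups. You are in fact more explicit than the paper about the fibre-product identification and about why the structure maps of $\aa(c)$ are the restrictions of $p_0$, $p_1$ to units.
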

\begin{proof}
The correspondence $\ee(c)$ is canonically isomorphic to
the composition of $\ee(c_f)^{-1}\colon \End L\rightleftharpoons\End X$
with $\ee(c_g)\colon \End X\rightleftharpoons\End M$. The
correspondences $\ee(c_f)$ and $\ee(c_g)$ are commensurabilities
by Lemma \ref{lem:endisog}, so $\ee(c)$ is a commensurability by
Proposition \ref{prop:composition}.
The assertion on $\aa(c)$ follows from Theorem \ref{thm:O} by passing to
the unit groups.
\end{proof}

\begin{theorem}\label{thm:compos}
Let $c\colon L\rightleftharpoons M$,
$d\colon M\rightleftharpoons N$ be commensurabilities of $R$-modules. Then:
\begin{enumerate}[leftmargin=*, label={\upshape(\alph*)}]
\item the ring commensurability $\ee(d\circ c)\colon \End L \rightleftharpoons \End N$
is equivalent (see Definition \ref{def:equiv}) to the composition of ring
commensurabilities $\ee(d)\circ \ee(c)$,
and the group commensurability $\aa(d\circ c)$ is equivalent to the composition
$\aa(d)\circ \aa(c)$;
\item we have
\begin{eqnarray*}
\ii(\ee(d\circ c))&=&\ii(\ee(d)) \ii(\ee(c)),\\
\ii(\aa(d\circ c))&=&\ii(\aa(d)) \ii(\aa(c)).
\end{eqnarray*}
\end{enumerate}
\end{theorem}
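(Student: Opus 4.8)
The plan is to prove part (a) first, and then deduce part (b) as a formal consequence. For part (a), the key observation is that the endomorphism-ring construction $\ee(-)$, while not a functor on the nose, is compatible with composition of correspondences up to a canonical equivalence. Concretely, I would unwind the definitions: given $c=(X,f,g)\colon L\rightleftharpoons M$ and $d=(Y,h,j)\colon M\rightleftharpoons N$, the composition $d\circ c$ has middle object $W=X\times_M Y$, so $\End(d\circ c)$ consists of triples $(\lambda,\omega,\nu)\in(\End L)\times(\End W)\times(\End N)$ satisfying the obvious compatibilities with $fp_0$ and $jp_1$. On the other side, $\ee(d)\circ\ee(c)$ has middle object $(\End c)\times_{\End M}(\End d)$, whose elements are pairs of triples $((\lambda,\xi,\mu),(\mu',\eta,\nu))$ with $\mu=\mu'$; that is, quadruples $(\lambda,\xi,\eta,\nu)$ with $\lambda f=f\xi$, $\mu g=g\xi$, $\mu h=h\eta$, $\nu j=j\eta$ for a common $\mu$. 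The natural map sends such a quadruple to $(\lambda,(\xi,\eta),\nu)$, using the universal property of $W=X\times_M Y$ to produce the endomorphism $(\xi,\eta)\in\End W$ from the fact that $g\xi=\mu g$ and $h\eta=\mu h$ agree after mapping to $M$. I would then exhibit this map as a morphism of rings fitting into a commutative diagram over $\End L$ and $\End N$, and verify directly that it is an isogeny — its kernel and cokernel are controlled by $\Hom$ groups into finite torsion modules, exactly as in the proof of Lemma \ref{lem:endisog} — so it constitutes an equivalence in the sense of Definition \ref{def:equiv}. The statement for $\aa(d\circ c)$ then follows by applying the functor $E\mapsto E^\times$ and invoking Theorem \ref{thm:O} to see that the ring isogeny just constructed induces a group isogeny, hence a group equivalence.

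For part (b), I would combine three earlier results. First, by Theorem \ref{thm:partb}, each of $\ee(c)$, $\ee(d)$, $\ee(d\circ c)$ is a ring commensurability and each of $\aa(c)$, $\aa(d)$, $\aa(d\circ c)$ is a group commensurability, so all the indices in the statement are defined. Second, by part (a), $\ee(d\circ c)$ is equivalent to $\ee(d)\circ\ee(c)$, so Proposition \ref{prop:equiv} gives $\ii(\ee(d\circ c))=\ii(\ee(d)\circ\ee(c))$, and then Proposition \ref{prop:composition} gives $\ii(\ee(d)\circ\ee(c))=\ii(\ee(d))\ii(\ee(c))$; the same chain with $\aa$ in place of $\ee$ yields the multiplicativity of $\ii\circ\aa$. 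This is entirely formal once part (a) is in hand.

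The main obstacle is the verification in part (a) that the canonical comparison map between the two middle objects is genuinely an isogeny, and that it is compatible with the two legs of each correspondence. The compatibility is a routine diagram chase using the universal property of the relevant fibre products (and Remark \ref{rmrk:assoc} for bookkeeping), but the isogeny claim requires the same kind of finiteness estimates as Lemma \ref{lem:endisog}: one must bound the kernel by a $\Hom$ group into a finite submodule and bound the cokernel by clearing denominators with a suitable non-zero $m\in Z$ and appealing to Lemma \ref{lem:P} and Lemma \ref{lem:fintors}. I expect the cleanest route is to avoid computing these estimates from scratch by instead reducing to the already-established facts: write $c=c_g\circ c_f^{-1}$ and $d=c_j\circ c_h^{-1}$ as in the proof of Theorem \ref{thm:partb}, so that both $\ee$ applied to a composition and the composition of the $\ee$'s are built out of the isogeny case $\ee(c_f)$ covered by Lemma \ref{lem:endisog}; the associativity and inverse machinery of Section \ref{sec:2} (Remark \ref{rmrk:assoc}, Lemma \ref{lem:equivinverse}, Proposition \ref{prop:inverse}) then reduces everything to the statement that $\ee$ commutes with composition of \emph{isogenies} up to canonical isomorphism, which is an easy direct check. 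If that reduction goes through smoothly, part (a) becomes short and part (b) is immediate.
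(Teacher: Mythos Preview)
Your overall architecture is exactly the paper's: define the canonical ring homomorphism
\[
i\colon \End c\times_{\End M}\End d \longrightarrow \End(d\circ c),\qquad ((\lambda,\xi,\mu),(\mu,\upsilon,\nu))\mapsto(\lambda,(\xi,\upsilon),\nu),
\]
check that it is an isogeny compatible with the two legs to $\End L$ and $\End N$, pass to units via Theorem~\ref{thm:O} for the statement about $\aa$, and derive (b) from (a) using Propositions~\ref{prop:equiv} and~\ref{prop:composition}. Your treatment of (b) is correct and matches the paper verbatim.

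The divergence is in how you propose to verify that $i$ is an isogeny. The paper does neither of the two things you suggest. It simply observes that the triangle
\[
\End c\times_{\End M}\End d \xrightarrow{\;\;i\;\;} \End(d\circ c)\longrightarrow \End L
\]
commutes; the composite is the left leg of $\ee(d)\circ\ee(c)$, hence an isogeny because $\ee(c)$ and $\ee(d)$ are commensurabilities by Theorem~\ref{thm:partb} and Proposition~\ref{prop:composition}; and the second arrow is the left leg of $\ee(d\circ c)$, hence an isogeny again by Theorem~\ref{thm:partb}. Proposition~\ref{prop:isog} (two out of three) then forces $i$ to be an isogeny, with no kernel or cokernel estimate needed at all.

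Your direct approach (kernel and cokernel bounded by $\Hom$'s into finite modules) would succeed but duplicates the work of Lemma~\ref{lem:endisog}. Your ``cleanest route'' contains an actual error: even for a composition of honest isogenies $f\colon L\to M$ and $g\colon M\to N$, the comparison map $\End(c_f)\times_{\End M}\End(c_g)\to\End(c_{gf})$ is \emph{not} a canonical isomorphism. Take $R=\Z$, $L=N=\Z$, $M=\Z\oplus\Z/2$, $f(x)=(x,0)$, $g(x,a)=x$; then the fibre product on the left has an extra $\Z/2$ in the kernel, coming from $\Hom(\coker f,\ker g)$. So the reduction lands you back at an isogeny statement of the same flavour, and in any case the middle piece $c_h^{-1}\circ c_g$ of $d\circ c$ is a genuine commensurability, not a composition of isogenies, so the decomposition does not achieve the promised reduction.
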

\begin{proof}

(a)
Write $c=(X,f,g)\colon L\rightleftharpoons M$, $d=(Y,h,j)\colon$ $M\rightleftharpoons N$.
We claim that there is an isogeny
$$
i\colon\End c\times_{\End M}\End d \rar \End (d\circ c)
$$
that makes the following
diagram of endomorphism rings commute:

$$
\xymatrix@R=12ex{
& \End c\times_{\End M}\End d\ar@{.>}[d]^{i} \ar[dl]\ar[dr] &\\
\End c \ar[d]\ar[dr] & \End (d\circ c) \ar[dl]\ar[dr]  & \End d \ar[dl]\ar[d] & \\
\End L & \End M & \End N,
}
$$
where all unlabelled morphisms are the ones defined in the introduction.

An element of $\End c\times_{\End M}\End d$ is a pair
of triples
\beq
((\lambda,\xi,\mu),(\mu',\upsilon,\nu)),\\
\lambda\in\End L,\xi\in\End X,\mu,\mu'\in \End M,
\upsilon\in\End Y,\nu\in\End N,
\eeq
satisfying $\lambda f=f\xi$, $\mu g=g\xi$, $\mu' h = h\upsilon$, $\nu j = j\upsilon$,
and the fibre product condition in fact demands that $\mu=\mu'$.

An element of $\End(d\circ c)$ is a triple $(\lambda',\zeta',\nu')\in\End L\times\End (X\times_M Y)
\times \End N$ satisfying $\lambda'fp_0 = \zeta'fp_0$, $\nu'jp_1=jp_1\zeta'$,
where $p_0$, $p_1$ are the canonical projection maps from
$X\times_M Y$ to $X$, respectively $Y$.
Define
\beq
i\colon & \End c\times_{\End M}\End d & \rar & \End(d\circ c)\\
& ((\lambda,\xi,\mu),(\mu,\upsilon,\nu)) & \mapsto & (\lambda,(\xi,\upsilon),\nu).
\eeq
A routine verification, which we leave to the reader,
shows that the image of $i$ is indeed contained in $\End(d\circ c)$.

To see that this definition of $i$ makes the above diagram of endomorphism
rings commute is also routine, and will also be omitted. It remains to
check that $i$ is an isogeny.
The correspondence $\ee(d)\circ \ee(c)\colon \End L\rightleftharpoons
\End N$ consists of $\End c\times_{\End M}\End d$, together with the
maps to $\End L$ and $\End N$. By
Theorem \ref{thm:partb}, the correspondences $\ee(c)$ and $\ee(d)$ are commensurabilities,
so by Proposition \ref{prop:composition} the correspondence
$\ee(d)\circ \ee(c)$ is a commensurability.
In particular, the morphism $\End c\times_{\End M}\End d\rar\End L$ is an
isogeny.
Also, $\End(d\circ c)\rar\End L$ is an isogeny by Theorem \ref{thm:partb}. The
fact that $i$ is an isogeny therefore follows from Proposition \ref{prop:isog}.
This proves our claim.

The isogeny $i$ defines an equivalence between $\ee(d\circ c)$ and
$\ee(d)\circ \ee(c)$. This proves part (a) for endomorphism rings.
By passing to the unit groups and applying Theorem \ref{thm:O} to the
isogeny $i$, we also obtain part (a) for automorphism groups.
  
Part (b) immediately follows from part (a) by Propositions \ref{prop:equiv}
and \ref{prop:composition}.
\end{proof}
\begin{proposition}\label{prop:equivequiv}
Let $L$, $M$ be finitely generated $R$-modules, and let $c$, $d\colon$
$L\rightleftharpoons M$ be two
commensurabilities. If $c$ is equivalent to $d$, then $\ee(c)$ is equivalent
to $\ee(d)$, and $\aa(c)$ is equivalent to $\aa(d)$.
\end{proposition}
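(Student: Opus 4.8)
The plan is to reduce to the generating case of the equivalence relation and then to write down an explicit equivalence between $\ee(c)$ and $\ee(d)$ (and between $\aa(c)$ and $\aa(d)$). By the remark following Proposition~\ref{prop:equivrel}, equivalence of correspondences $L\rightleftharpoons M$ is the smallest equivalence relation under which $(X,f,g)$ is equivalent to $(W,fp,gp)$ for every isogeny $p\colon W\to X$, and by Proposition~\ref{prop:isog} one member of such a pair is a commensurability precisely when the other is; so the restriction of that relation to commensurabilities is generated by pairs of this form. Since the property that $\ee(c)$ be equivalent to $\ee(d)$ and $\aa(c)$ be equivalent to $\aa(d)$ is itself an equivalence relation on commensurabilities $L\rightleftharpoons M$, inheriting reflexivity, symmetry and transitivity from Proposition~\ref{prop:equivrel}, it suffices to prove the proposition when $c=(X,f,g)$ and $d=(W,fp,gp)$ with $p\colon W\to X$ an isogeny.

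In that case I would set
\[
T=\{(\lambda,\xi,\omega,\mu)\in(\End L)\times(\End X)\times(\End W)\times(\End M):\lambda f=f\xi,\ \mu g=g\xi,\ \xi p=p\omega\},
\]
a subring of the product ring, and consider the ring homomorphisms $P\colon T\to\End c$, $(\lambda,\xi,\omega,\mu)\mapsto(\lambda,\xi,\mu)$, and $Q\colon T\to\End d$, $(\lambda,\xi,\omega,\mu)\mapsto(\lambda,\omega,\mu)$; the map $Q$ does land in $\End d$ because $\lambda f=f\xi$ and $\xi p=p\omega$ give $\lambda(fp)=(fp)\omega$, and likewise $\mu(gp)=(gp)\omega$. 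The compatibilities required of an equivalence are immediate: the composites $T\to\End c\to\End L$ and $T\to\End d\to\End L$ both send $(\lambda,\xi,\omega,\mu)$ to $\lambda$, and the two composites into $\End M$ both send it to $\mu$. Hence, once $P$ and $Q$ are shown to be ring isogenies, $(T,P,Q)\colon\End c\rightleftharpoons\End d$ is an equivalence between $\ee(c)$ and $\ee(d)$; passing to unit groups and applying Theorem~\ref{thm:O} to $P$ and $Q$, the triple $(T^\times,P^\times,Q^\times)$ is an equivalence between $\aa(c)$ and $\aa(d)$.

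It remains to prove that $P$ and $Q$ are ring isogenies. Their kernels are finite by a standard argument: $\ker Q$ consists of the $(0,\xi,0,0)\in T$ with $\xi p=0$, and such $\xi$ vanish on the finite-index submodule $pW\subset X$, hence lie in $\Hom_R(X/pW,X)=\Hom_R(X/pW,X_{\tors})$ (by Lemma~\ref{lem:fintors} the image, being finite, is contained in $X_{\tors}$), a finite set; similarly $\ker P\cong\Hom_R(W,\ker p)$, which is finite. For the cokernel of $Q$ I would clear denominators: choose a non-zero $m\in Z$ annihilating $W_{\tors}$ and $X/pW$, so that $p$ restricts to an injection on $mW$ with image containing $m^2X$, and let $q\colon m^2X\to mW$ be the resulting $R$-linear partial inverse of $p$. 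For $(\lambda,\omega,\mu)\in\End d$, the map $\xi\colon X\to X$, $x\mapsto p(\omega(q(m^2x)))$, then satisfies $\xi p=m^2p\omega$, $f\xi=m^2\lambda f$ and $g\xi=m^2\mu g$ (the last two using $\lambda(fp)=(fp)\omega$ and $\mu(gp)=(gp)\omega$), so $(m^2\lambda,\xi,m^2\omega,m^2\mu)$ lies in $T$ and $Q$ maps it to $m^2(\lambda,\omega,\mu)$; thus the image of $Q$ contains $m^2\End d$, which has finite index in $\End d$ by Lemma~\ref{lem:P} (used as in the proof of Lemma~\ref{lem:endisog}). The cokernel of $P$ is handled the same way, or deduced from Proposition~\ref{prop:isog} using that $\End c\to\End L$ and $\End d\to\End L$ are ring isogenies by Theorem~\ref{thm:partb}. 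I expect this cokernel estimate to be the main technical point: one must arrange a single tuple that simultaneously satisfies all three relations defining $T$, which is what forces the quadratic denominator $m^2$, and one needs $\End d/m^2\End d$ to be finite, which is the sort of statement Lemma~\ref{lem:P} supplies; the remaining checks are routine.
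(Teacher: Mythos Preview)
Your argument is correct. Both you and the paper reduce to the generating case $c=(X,f,g)$, $d=(W,fp,gp)$ with $p\colon W\to X$ an isogeny, but from there the two proofs diverge. The paper proceeds categorically: it writes $d$ canonically as the composition $(c_g\circ c_p)\circ(c_p^{-1}\circ c_f^{-1})$ and then applies Theorem~\ref{thm:compos}, Proposition~\ref{prop:inverse}, and Proposition~\ref{prop:equivcompos} to cancel the factors involving $c_p$, arriving at $\ee(c_g)\circ\ee(c_f^{-1})\sim\ee(c)$. You instead build the equivalence $(T,P,Q)$ explicitly and verify that $P$, $Q$ are ring isogenies by hand, via finiteness estimates parallel to those in Lemma~\ref{lem:endisog}. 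The paper's approach is shorter because it cashes in on the machinery already assembled in Theorem~\ref{thm:compos} (whose proof, in turn, contains an explicit isogeny verification of the same flavour as yours). Your approach is more self-contained and makes the equivalence between $\ee(c)$ and $\ee(d)$ completely concrete; its cost is that it essentially reproves a special instance of Theorem~\ref{thm:compos}. Your trick of using Proposition~\ref{prop:isog} together with Theorem~\ref{thm:partb} to deduce that $P$ is an isogeny once $Q$ is known to be one is a nice shortcut and matches the spirit of the paper's argument for the map $i$ in Theorem~\ref{thm:compos}.
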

\begin{proof}
Let $c=(X,f,g)$ and $d=(Y,h,j)$. First, assume that an equivalence between
$c$ and $d$ is given by an isogeny $p\colon Y\rar X$, so that we have the
following commutative diagram:
$$
\xymatrix{
& Y\ar[d]_{p}\ar[dl]_{h}\ar[dr]^j & \\
L & X\ar[r]^{g}\ar[l]_{f} & M. \\
}
$$
As before, write $c_f=(X,\id,f)\colon X\rightleftharpoons L$,
and define $c_g$, $c_p$ similarly. Then
$d$ is canonically isomorphic to $(c_g\circ c_p)\circ (c_p^{-1}\circ c_f^{-1})$.
By Theorem \ref{thm:compos}, the commensurability $\ee(d)$ is equivalent to
$\ee(c_g)\circ \ee(c_p)\circ \ee(c_p)^{-1}\circ \ee(c_f^{-1})$.
By Proposition \ref{prop:inverse}, the composition $\ee(c_p)\circ \ee(c_p)^{-1}$
is equivalent to $(\End X,\id,\id)\colon \End X\rightleftharpoons \End X$.
So by Proposition \ref{prop:equivcompos} the commensurability
$\ee(c_g)\circ \ee(c_p)\circ \ee(c_p)^{-1}\circ \ee(c_f^{-1})$
is equivalent to $\ee(c_g)\circ (\End X,\id,\id)
\circ \ee(c_f^{-1})$, which is canonically isomorphic to
$\ee(c_g) \circ \ee(c_f^{-1})$. Applying Theorem \ref{thm:compos} again,
we find that $\ee(c_g) \circ \ee(c_f^{-1})$ is equivalent
to $\ee(c_g\circ c_f^{-1})$. Finally, $c_g\circ c_f^{-1}$ is canonically
isomorphic to $c$, and the special case of the proposition follows.

Passing to the general case, let $(W,p,q)\colon X\rightleftharpoons Y$ 
be an equivalence between $c$ and $d$.
Since $p$ is an isogeny, $c$ is equivalent to $(W,fp,gp)$, and
since $q$ is an isogeny, $d$ is equivalent to $(W,hq,jq)=(W,fp,gp)$.
The result therefore follows from the special case we just did.
\end{proof}\noindent
Let $\Rng$ denote the category of rings, and $\Grp$ the category of groups.
Theorem \ref{thm:compos} and Proposition \ref{prop:equivequiv} imply that
there is a functor from $\RMod_{\com}$ to $\Rng_{\com}$ that
takes an $R$-module $L$ to the ring $\End L$, and an equivalence class of
$R$-module commensurabilities,
represented by a commensurability $c$, to the equivalence class of
ring commensurabilities represented by $\ee(c)$. Further, Theorem \ref{thm:O}
shows that we have
the functors $^+$ and $^\times$ from $\Rng_{\com}$ to
$\Grp_{\com}$ which take a ring to the additive, respectively multiplicative
group of the ring. Finally, Propositions \ref{prop:equiv} and \ref{prop:composition}
imply that we have the functor $\ii$ from $\Grp_{\com}$ to the group $\Q_{>0}$, thought of
as a groupoid with one object. To summarise, we have the functors of groupoids
\begin{eqnarray}\label{eq:groupoids}
\RMod_{\com} \stackrel{\End}{\longrightarrow} \Rng_{\com}\doublerightarrow{+}{\times} \Grp_{\com}
\stackrel{\ii}{\longrightarrow} \Q_{>0}.
\end{eqnarray}

Let $L$ be a finitely generated $R$-module, and let $V$ denote the $A$-module
$Q\otimes_Z L$. The isomorphism of Corollary \ref{cor:G_L} and the functors
(\ref{eq:groupoids}) then induce group homomorphisms
\beqn\label{eq:homs}
\Aut_A V\cong & \cG_L & \rar & \Q_{>0},\\
& c &\mapsto & \ii(\ee(c))\quad\text{ and}\\
& c & \mapsto & \ii(\aa(c)).
\eeqn

\begin{lemma}\label{lem:throughtorsion}
Let $L$ be a finitely generated $R$-module, write $\bar{L}=L/L_{\tors}$,
and let $f\colon L\rar \bar{L}$ denote the quotient map.
Then the isomorphism in $\RMod_{\com}$ given by the commensurability
$(L,\id,f)\colon L\rightleftharpoons \bar{L}$
induces an isomorphism $\cG_{L}\rar \cG_{\bar{L}}$ that commutes with
the maps $\cG_L\rar \Q_{>0}$ and $\cG_{\bar{L}}\rar \Q_{>0}$ defined in (\ref{eq:homs}).
\end{lemma}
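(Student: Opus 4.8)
The plan is to deduce the lemma from the functoriality recorded in (\ref{eq:groupoids}), together with the fact that $\Q_{>0}$ is abelian. First one checks that $c_f=(L,\id,f)$ really is a commensurability: the map $f$ is surjective, and its kernel $L_{\tors}$ is finite by Lemma \ref{lem:fintors}, so $f$ is an isogeny. Hence $c_f$ represents a morphism $\phi$ in $\RMod_{\com}$, which by Proposition \ref{prop:inverse} is an isomorphism $L\to\bar L$; the induced map $\cG_L\to\cG_{\bar L}$ is then, as for any isomorphism in a groupoid, conjugation $g\mapsto\phi\circ g\circ\phi^{-1}$ of automorphism groups inside $\RMod_{\com}$.

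Next I would observe that the two homomorphisms $\cG_L\to\Q_{>0}$ of (\ref{eq:homs}) are exactly the restrictions to $\Hom_{\RMod_{\com}}(L,L)=\cG_L$ of the composite functors
$$F_\ee\colon\ \RMod_{\com}\stackrel{\End}{\longrightarrow}\Rng_{\com}\stackrel{+}{\longrightarrow}\Grp_{\com}\stackrel{\ii}{\longrightarrow}\Q_{>0},$$
$$F_\aa\colon\ \RMod_{\com}\stackrel{\End}{\longrightarrow}\Rng_{\com}\stackrel{\times}{\longrightarrow}\Grp_{\com}\stackrel{\ii}{\longrightarrow}\Q_{>0}$$
furnished by (\ref{eq:groupoids}): indeed, applying the functor $^+$ to $\ee(c)$ gives the additive-group commensurability whose index is by definition $\ii(\ee(c))$, while applying $^\times$ gives $\aa(c)$ because $\Aut c=(\End c)^\times$; thus $F_\ee(c)=\ii(\ee(c))$ and $F_\aa(c)=\ii(\aa(c))$, and the same formulae describe the two maps $\cG_{\bar L}\to\Q_{>0}$.

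Given this, the lemma is immediate: for any $c\in\cG_L$ and any $F\in\{F_\ee,F_\aa\}$, functoriality of $F$ (which between groupoids also preserves inverses) yields $F(\phi\circ c\circ\phi^{-1})=F(\phi)\,F(c)\,F(\phi)^{-1}$ in $\Q_{>0}$, and since $\Q_{>0}$ is abelian this equals $F(c)$; this is precisely the asserted commutativity. I expect no real obstacle here — the content is just the triviality that an inner automorphism of an abelian group is the identity — and the only steps needing a line of verification are that $c_f$ is a commensurability and that the maps in (\ref{eq:homs}) are honest restrictions of the functors in (\ref{eq:groupoids}). If one prefers to avoid the functorial packaging, one can argue directly that $\phi\circ c\circ\phi^{-1}$ is represented by $c_f\circ c\circ c_f^{-1}$ and invoke Theorem \ref{thm:compos}(b) to obtain $\ii(\ee(c_f\circ c\circ c_f^{-1}))=\ii(\ee(c_f))\,\ii(\ee(c))\,\ii(\ee(c_f^{-1}))$, whereupon the outer factors cancel since $\ee(c_f^{-1})$ is the inverse commensurability of $\ee(c_f)$; the argument for $\aa$ is identical.
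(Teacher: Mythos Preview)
Your proposal is correct and essentially the same as the paper's proof. The paper writes the induced isomorphism $\cG_L\to\cG_{\bar L}$ as conjugation by $t=c_f$ and then invokes Theorem~\ref{thm:compos}, Proposition~\ref{prop:composition}, and Proposition~\ref{prop:equiv} to get multiplicativity and cancellation---precisely your ``direct'' alternative at the end; your functorial packaging via (\ref{eq:groupoids}) is just a slightly more conceptual rephrasing of the same argument.
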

\begin{proof}
Let $t$ denote the commensurability $L\stackrel{\id}{\leftarrow}L\rar \bar{L}$.
Then the isomorphism $\cG_{L}\rar \cG_{\bar{L}}$
is given by composition on the right with $t$ and on the left with $t^{-1}$.
It follows from Theorem \ref{thm:compos}, Proposition \ref{prop:composition}
and Proposition \ref{prop:equiv} that this isomorphism commutes with the maps
(\ref{eq:homs}).
\end{proof}

\begin{proposition}\label{prop:throughcentre}
Let $L$ be a finitely generated $R$-module, and denote the $A$-module $Q\otimes_Z L$
by $V$. Let $\alpha$ be an element of $\Zz(\End_A V)^\times\subset \Aut_A V\cong \cG_L$.
Then its image in $\Rng_{\com}$ under the first functor of (\ref{eq:groupoids}) is the
identity morphism on $\End L$.
\end{proposition}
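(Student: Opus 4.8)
\emph{Plan.} The plan has two parts: I would first reduce to the case that $L$ is $Z$-torsion free, and then, in that case, construct an explicit commensurability $c_0\colon L\rightleftharpoons L$ representing $\alpha$ for which the endomorphism ring $\End c_0$ collapses to the diagonal copy of $\End L$, so that $\ee(c_0)=(\End c_0,p_0,p_1)$ is patently equivalent to $(\End L,\id,\id)$. That this suffices rests on two facts already proved: $\ee$ respects the equivalence relation on correspondences (Proposition \ref{prop:equivequiv}), and, under the identification of Corollary \ref{cor:G_L}, the functor $\End\colon\RMod_{\com}\rar\Rng_{\com}$ of (\ref{eq:groupoids}) sends $\alpha\in\cG_L$ to the class $[\ee(c)]$ for \emph{any} commensurability $c\colon L\rightleftharpoons L$ representing $\alpha$.

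For the reduction, put $\bar L=L/L_{\tors}$ and let $t\colon L\rar\bar L$ be the quotient map, an isogeny by Lemma \ref{lem:fintors}; then $c_t=(L,\id,t)\colon L\rightleftharpoons\bar L$ is an isomorphism in $\RMod_{\com}$, whose image under $\End\colon\RMod_{\com}\rar\Rng_{\com}$ is an isomorphism $[\ee(c_t)]\colon\End L\rar\End\bar L$. Conjugation by $[c_t]$ is an isomorphism $\cG_L\rar\cG_{\bar L}$, and, since $c_t$ maps under the functor $\cF$ of Theorem \ref{thm:equivcat} to the canonical isomorphism $Q\otimes_Z L\rar Q\otimes_Z\bar L$, this conjugation carries $\alpha$ to an element $\alpha'\in\Zz(\End_A(Q\otimes_Z\bar L))^\times$. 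Granting the torsion-free case (proved below) for the module $\bar L$, choose a representative $c'$ of $\alpha'$ with $\ee(c')$ equivalent to $(\End\bar L,\id,\id)$; since $c_t^{-1}\circ c'\circ c_t$ represents $\alpha$, applying $\End\colon\RMod_{\com}\rar\Rng_{\com}$ shows that the image of $\alpha$ in $\Rng_{\com}$ equals $[\ee(c_t)]^{-1}\circ[\ee(c')]\circ[\ee(c_t)]=[\ee(c_t)]^{-1}\circ[\ee(c_t)]$, which is the identity of $\End L$ by Proposition \ref{prop:inverse}. So we may assume $L$ is $Z$-torsion free.

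In that case $L$ embeds into $V=Q\otimes_Z L$, and $\psi\mapsto Q\otimes_Z\psi$ embeds $\End L$ as a subring of $\End_A V$. Clearing denominators, I would pick a non-zero $m\in Z$ with $m\alpha(L)\subset L$ and let $h\colon L\rar L$ be the map $x\mapsto m\alpha(x)$. Since multiplication by $m$ and $\alpha$ both act invertibly on the $Q$-vector space $V$, the map $h$ is injective; its image $m\alpha(L)$ is a finitely generated $R$-submodule of $V$ with $Q\cdot m\alpha(L)=V$, hence is of finite index in $L$ by Lemma \ref{lem:P} (after one further clearing of denominators). So $h$ is an isogeny, as is multiplication by $m$, and $c_0=(L,m,h)\colon L\rightleftharpoons L$ is a commensurability. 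As $h$ induces the self-map $v\mapsto m\alpha(v)$ of $V$ while $m\colon L\rar L$ induces multiplication by $m$ on $V$, the isomorphism of Corollary \ref{cor:G_L} sends $c_0$ to $\alpha$, so $c_0$ represents $\alpha$. It remains to compute $\End c_0$: a triple $(\lambda,\xi,\mu)\in(\End L)^3$ lies in it precisely when $\lambda m=m\xi$ and $\mu h=h\xi$. The first equation reads $m\lambda=m\xi$ and forces $\xi=\lambda$, because $L$ is torsion free; applying $Q\otimes_Z(-)$ to the second, in which $h$ becomes the automorphism $m\alpha$ of $V$, gives $\mu=(m\alpha)\lambda(m\alpha)^{-1}=\alpha\lambda\alpha^{-1}$, which equals $\lambda$ since $\alpha$ is central in $\End_A V$. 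Hence $\End c_0=\{(\lambda,\lambda,\lambda):\lambda\in\End L\}$, both structure maps $\End c_0\rar\End L$ are the evident isomorphism, and $(\End c_0,\id,p_0)$ is an equivalence between $\ee(c_0)$ and $(\End L,\id,\id)$.

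I expect the main obstacle to be this last computation: hitting on $c_0=(L,m,m\alpha|_L)$ as a workable representative of $\alpha$, and recognising that the centrality of $\alpha$ is precisely what forces the third coordinate of a triple in $\End c_0$ to coincide with the first. The reduction to the torsion-free case, and the verifications that $h$ and multiplication by $m$ are isogenies, are routine.
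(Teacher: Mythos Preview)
Your proof is correct and follows essentially the same route as the paper: reduce to the torsion-free case, pick an explicit commensurability representing $\alpha$, and use centrality of $\alpha$ to show that the two projections from its endomorphism ring to $\End L$ coincide. The only difference is cosmetic: the paper represents $\alpha$ by $(L\cap\alpha^{-1}L,\iota,\alpha)$ and packages the endomorphism computation via the subrings $E_U=\{\phi\in\End_A V:\phi U\subset U\}$, whereas your representative $(L,m,m\alpha|_L)$ keeps the middle module equal to $L$ and makes the verification $\End c_0=\{(\lambda,\lambda,\lambda)\}$ a touch more direct.
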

\begin{proof}
By Lemma \ref{lem:throughtorsion}, we may assume that $L$ is $Z$-torsion free.
Thus, $L$ injects into $V=Q\otimes_Z L$. For any sub-$R$-module $U$ of $V$, write
$E_U=\{\phi\in \End_A V:\phi U\subset U\}$. Then the injection $L\rightarrowtail V$
induces a map $\End_R L\rar \End_AV$, which is injective and whose image is
exactly $E_L$.

Let $\alpha \in \Aut_AV$ be arbitrary. Then the isomorphism $\Aut_AV\cong \cG_L$
identifies $\alpha$ with the equivalence class of commensurabilities
represented by $c=(L\cap \alpha^{-1}L,i,\alpha)\colon L\rightleftharpoons L$,
where $i\colon L\cap \alpha^{-1}L\rar L$ is the inclusion map.
We have
\begin{align*}
\End c = & \{(\lambda_0,\lambda_1)\in \End_A V\times \End_A V :\\
& \lambda_0 \in E_L\cap E_{\alpha^{-1}L},
\lambda_1 \in E_{\alpha L}\cap E_L,
\lambda_0 = \alpha^{-1}\lambda_1\alpha\}.
\end{align*}
The commensurability $\ee(c)$ is then of the form
$(\End c,p_0,p_1)\colon \End L\rightleftharpoons \End L$,
where $p_0\colon (\lambda_0,\lambda_1)\mapsto \lambda_0$,
and $p_1\colon (\lambda_0,\lambda_1)\mapsto \lambda_1 = \alpha\lambda_0\alpha^{-1}$.

It follows that if $\alpha$ is an element of $\Zz(\End_A V)^\times$,
then $p_0$ and $p_1$ are equal. In this case, the commensurability
$(\End c,\id,p_0)\colon \End c\rightleftharpoons \End L$ defines
an equivalence between $\ee(c)$ and
$(\End L,\id,\id)\colon \End L\rightleftharpoons\End L$, the identity
morphism on $\End L$ in $\Rng_{\com}$.
\end{proof}\noindent
The following result is an immediate consequence of Proposition
\ref{prop:throughcentre}.
\begin{corollary}\label{cor:throughcentre}
The two group homomorphisms $\Aut_AV\rar \Q_{>0}$ of (\ref{eq:homs})
factor through $\Aut_AV/\Zz(\End_A V)^\times$.
\end{corollary}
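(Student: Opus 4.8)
The plan is to obtain Corollary~\ref{cor:throughcentre} directly from Proposition~\ref{prop:throughcentre} by transporting the conclusion of that proposition along the chain of functors in~(\ref{eq:groupoids}). Recall that each of the two homomorphisms in~(\ref{eq:homs}) is, by construction, the composite of the isomorphism $\cG_L\cong\Aut_A V$ of Corollary~\ref{cor:G_L} (equivalently, the restriction of the functor $\cF$ to the full subgroupoid of $\RMod_{\com}$ on the single object $L$), followed by $\End\colon\RMod_{\com}\to\Rng_{\com}$, then by one of the two functors ${}^+,{}^\times\colon\Rng_{\com}\to\Grp_{\com}$, and finally by $\ii\colon\Grp_{\com}\to\Q_{>0}$. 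Note also that $\Zz(\End_A V)^\times$ consists of central elements of the group $\Aut_A V=(\End_A V)^\times$, hence is a normal subgroup, so the quotient $\Aut_A V/\Zz(\End_A V)^\times$ is well defined.

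The argument then runs as follows. Let $\alpha\in\Zz(\End_A V)^\times$, viewed as an element of $\cG_L$ via Corollary~\ref{cor:G_L}. By Proposition~\ref{prop:throughcentre}, the image of $\alpha$ under the functor $\End$ is the identity morphism of $\End L$ in $\Rng_{\com}$. Since any functor preserves identity morphisms, the image of $\alpha$ under ${}^+\circ\End$, respectively ${}^\times\circ\End$, is the identity morphism of the additive, respectively multiplicative, group of $\End L$ in $\Grp_{\com}$; and $\ii$ sends each of these identity commensurabilities to $1\in\Q_{>0}$. Hence both homomorphisms of~(\ref{eq:homs}) send $\alpha$ to $1$, so $\Zz(\End_A V)^\times$ lies in the kernel of each of them. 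As these kernels are subgroups containing the normal subgroup $\Zz(\End_A V)^\times$, both homomorphisms factor through $\Aut_A V/\Zz(\End_A V)^\times$, which is the assertion.

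I do not expect any real obstacle: the substantive content is already contained in Proposition~\ref{prop:throughcentre}, and what remains is purely functorial bookkeeping. In fact, since the target $\Q_{>0}$ is abelian, every homomorphism out of $\Aut_A V$ automatically factors through its abelianisation, so the normality of $\Zz(\End_A V)^\times$ plays no essential role; the only point genuinely worth verifying is that the two homomorphisms of~(\ref{eq:homs}) really are the composites described above, so that Proposition~\ref{prop:throughcentre} applies without modification.
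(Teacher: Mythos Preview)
Your proposal is correct and matches the paper's approach: the paper simply declares the corollary ``an immediate consequence of Proposition~\ref{prop:throughcentre}'', and your argument spells out precisely that deduction via the functoriality of~(\ref{eq:groupoids}). There is nothing to add.
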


\begin{remark}\label{rmrk:ie}
The computation in the proof of Proposition \ref{prop:throughcentre} shows that
the group homomorphism $\ii\circ \ee\colon \Aut_AV\rar \Q_{>0}$ is given by
$$
\alpha \mapsto \frac{(E_L:E_{\alpha L}\cap E_L)}{(E_L:E_{L}\cap E_{\alpha^{-1}L})},
$$
and analogously for $\ii\circ \aa$.
\end{remark}

\section{The case of semisimple algebras}\label{sec:proofs}\noindent
In this section, we prove our main results.
We begin with Theorem \ref{thm:mainintro}. We recall the statement.
\begin{theorem}\label{thm:maingeneral}
Let $Z$ be an infinite domain such that for all non-zero $m\in Z$ the ring
$Z/mZ$ is finite, let $Q$ be the field of fractions of $Z$, let
$A$ be a semisimple $Q$-algebra of finite vector space dimension
over $Q$, let $R\subset A$ be a sub-$Z$-algebra with $Q\cdot R=A$,
and let $L$, $M$ be finitely generated $R$-modules.
Then:
\begin{enumerate}[leftmargin=*, label={\upshape(\alph*)}]
\item there is an $R$-module commensurability
$L\rightleftharpoons M$ if and only if the $A$-modules
$Q\otimes_ZL$ and $Q\otimes_ZM$ are isomorphic;
\item if $c\colon L\rightleftharpoons M$ is an $R$-module
commensurability, then $\ee(c)\colon\End L\rightleftharpoons\End M$
is a ring commensurability, and $\aa(c)\colon\Aut
L\rightleftharpoons\Aut M$ is a group commensurability;
\item if $c$, $c'\colon L\rightleftharpoons M$ are
$R$-module commensurabilities, then one has
$$\ii(\ee(c))=\ii(\ee(c')),\;\;\;\;\; \ii(\aa(c))=\ii(\aa(c')).$$
\end{enumerate}
\end{theorem}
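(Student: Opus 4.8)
The plan is to deduce parts (a) and (b) directly from the more general results already proved for left-noetherian $R$, and then to reduce part (c)---the essential statement---to the case $L=M$ and ultimately to Theorem \ref{thm:X} (Theorem \ref{thm:finexpintro}). For (a), note first that by Theorem \ref{thm:Q} the ring $R$ is left-noetherian, so Notation \ref{not:Z} applies and part (a) is exactly Theorem \ref{thm:parta}. For (b), again $R$ is left-noetherian, so Theorem \ref{thm:partb} gives both that $\ee(c)$ is a ring commensurability and that $\aa(c)$ is a group commensurability. So the only real work is (c).

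For part (c), the first step is to reduce to the case $L=M$. Since $c,c'\colon L\rightleftharpoons M$ exist, part (a) gives an isomorphism $Q\otimes_Z L\cong Q\otimes_Z M$ of $A$-modules, hence in particular a commensurability $d\colon M\rightleftharpoons L$; then $d\circ c$ and $d\circ c'$ are commensurabilities $L\rightleftharpoons L$, and by Theorem \ref{thm:compos}(b) one has $\ii(\ee(d\circ c))=\ii(\ee(d))\ii(\ee(c))$ and similarly for $c'$ and for $\aa$, so it suffices to prove $\ii(\ee(d\circ c))=\ii(\ee(d\circ c'))$, i.e.\ the case $L=M$. In that case, Theorem \ref{thm:G_L} and Corollary \ref{cor:G_L} identify $\cG_L$ with $\Aut_A V$ where $V=Q\otimes_Z L$, and the maps $c\mapsto \ii(\ee(c))$ and $c\mapsto\ii(\aa(c))$ become \emph{group homomorphisms} $\Aut_A V\to\Q_{>0}$; the statement of (c) is precisely that these homomorphisms are trivial. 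Write $B=\End_A V$; since $A$ is semisimple and $V$ is a finitely generated $A$-module, $B$ is a semisimple ring, and being a quotient of a matrix ring over $A$ it is finitely generated as a module over its centre (indeed $\Zz(B)$ is a quotient of $\Zz(A)$, which is finite-dimensional over $Q$). By Corollary \ref{cor:throughcentre}, the two homomorphisms $B^\times=\Aut_A V\to\Q_{>0}$ factor through $B^\times/\Zz(B)^\times$. Since $\Q_{>0}$ is abelian, they also factor through $B^\times/(\Zz(B)^\times[B^\times,B^\times])$, which by Theorem \ref{thm:X} is an abelian group of finite exponent. But $\Q_{>0}$ is torsion-free, so any homomorphism from a group of finite exponent into it is trivial. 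This proves (c).

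The main obstacle, conceptually, is already absorbed into the machinery developed in Sections \ref{sec:2}--\ref{sec:autcomm}: the genuinely hard inputs are Corollary \ref{cor:throughcentre} (which says the homomorphisms kill $\Zz(B)^\times$, resting on the explicit fibre-product computation of Proposition \ref{prop:throughcentre}) and Theorem \ref{thm:X}/\ref{thm:W} (finite exponent of $B^\times/(\Zz(B)^\times[B^\times,B^\times])$, resting on Wedderburn factorisation). Given those, the proof of (c) is just the observation that a finite-exponent abelian group admits no nontrivial homomorphism to the torsion-free group $\Q_{>0}$. So in the write-up the one point deserving care is the verification that $B=\End_A V$ is semisimple and module-finite over its centre, so that Theorem \ref{thm:X} genuinely applies; this follows from semisimplicity of $A$ together with finiteness of $\dim_Q A$, via the structure theory of semisimple rings recalled in Section \ref{sec:unitssemisimple}.

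Finally, one should record the deduction of Theorem \ref{thm:QGintro}: take $Z=\Z$, $Q=\Q$, $A=\Q[G]$, $R=\Z[G]$, which satisfy all the hypotheses of Theorem \ref{thm:maingeneral}; then define $\ia(L,M)=\ii(\aa(c))$ for any commensurability $c\colon L\rightleftharpoons M$, well-defined by part (c). Properties (a) and (b) of Theorem \ref{thm:QGintro} are formal (isomorphism-invariance and multiplicativity via Theorem \ref{thm:compos}), and property (c) follows by exhibiting, for a finite-index submodule $L\subset M$, the inclusion as an isogeny $\iota\colon L\to M$, forming $c_\iota=(L,\id,\iota)$, and computing $\ii(\aa(c_\iota))$ in terms of the group correspondence $\aa(c_\iota)\colon\Aut L\rightleftharpoons\Aut M$; unwinding the definition of $\aa$ and of the index of a group commensurability yields exactly the stated formula $\frac{(\Aut M:H)\cdot\#\ker\rho}{(\Aut L:\rho H)}$, with the three cardinals finite by Theorem \ref{thm:partb}.
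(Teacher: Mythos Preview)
Your proof is correct and follows essentially the same route as the paper: invoke Theorem~\ref{thm:Q} to place yourself in the setting of Notation~\ref{not:Z}, read off (a) and (b) from Theorems~\ref{thm:parta} and~\ref{thm:partb}, reduce (c) to the case $L=M$ via multiplicativity (the paper uses $c^{-1}\circ c'$ rather than an auxiliary $d$, but this is cosmetic), and then kill the homomorphisms $B^\times\to\Q_{>0}$ by combining Corollary~\ref{cor:throughcentre} with Theorem~\ref{thm:X} and torsion-freeness of $\Q_{>0}$. One small wording issue: $B=\End_A V$ is not literally a \emph{quotient} of a matrix ring over $A$ (it is a corner $e\,\M(n,A^{\opp})\,e$), and $\Zz(B)$ is not in general a quotient of $\Zz(A)$ for arbitrary $A$; the clean justification that $B$ is module-finite over $\Zz(B)$ is simply that $Q\subset\Zz(B)$ and $\dim_Q B<\infty$, or (as the paper does) that semisimplicity of $A$ makes $B$ a finite product of matrix rings over division algebras finite over their centres.
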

\begin{proof}
By Theorem \ref{thm:Q}, the ring $R$ is left-noetherian, so the assumptions
of Notation \ref{not:Z} are satisfied. Parts (a) and (b) of the theorem therefore
follow from Theorems \ref{thm:parta} and \ref{thm:partb}, respectively.

We now prove part (c). Let $c,c'\colon L\rightleftharpoons M$ be $R$-module
commensurabilities. By Theorem \ref{thm:compos}, the assertion of part
(c) is equivalent to the statement that
$$
\ii(\ee(c^{-1}\circ c'))=\ii(\aa(c^{-1}\circ c'))=1.
$$
So we may, without loss of generality, assume that $L=M$, and it
suffices to show that the homomorphisms
$$
\ii\circ \ee, \ii\circ \aa\colon \Aut_A V\cong \cG_L\rar \Q_{>0}
$$
defined
in (\ref{eq:homs}) are trivial. Here $V$ denotes the $A$-module $Q\otimes_Z L$.

Let $B$ denote the $Q$-algebra $\End_AV$, so that $\cG_L=B^\times$.
Since $\Q_{>0}$ is abelian, both homomorphisms $\ii\circ \ee$ and $\ii\circ \aa$
factor through $B^\times/[B^\times,B^\times]$. By Corollary \ref{cor:throughcentre},
they also factor through $B^\times/\Zz(B)^\times$. Since $A$ is a semisimple
ring, and since $V$ is a finitely generated $A$-module, it follows that $V$
is a finite direct sum of
simple modules, so by Schur's lemma $B$ is a direct product of matrix rings over
division rings, and in particular a semisimple ring. By Theorem \ref{thm:X}, the
quotient $B^\times/(\Zz(B)^\times[B^\times,B^\times])$ is an abelian group
of finite exponent. Since $\Q_{>0}$ is torsion-free, any homomorphism
$B^\times/(\Zz(B)^\times[B^\times,B^\times])\rar \Q_{>0}$ must be trivial.
\end{proof}
\begin{example}\label{ex:notss}
The following example demonstrates that if we replace the semisimplicity
assumption on $A$ by the condition that $R$ be left-noetherian, then
the conclusion of Theorem \ref{thm:mainintro}(c) need no longer hold.

Let $R=\begin{psmallmatrix}\Z&\Z\\0&\Z\end{psmallmatrix}$,
and $A=\Q\otimes_{\Z}R$.
Let $L$ be a free $R$-module of rank 1, set $V=\Q\otimes_{\Z}L$, and
$B=\End_AV$. We have $\End L\cong R^{\opp}$, and similarly
\begin{eqnarray*}
B^\times\cong (A^{\opp})^\times \cong \begin{psmallmatrix}\Q^\times&0\\\Q&\Q^\times\end{psmallmatrix}.
\end{eqnarray*}

Recall from equation (\ref{eq:homs}), that $\ii\circ\ee$ defines a group
homomorphism from $B^\times$ to $\Q_{>0}$,
which factors through $B^\times/(\Zz(B)^\times\cdot [B^\times,B^\times])$.
The map $\begin{psmallmatrix}a&0\\b&c\end{psmallmatrix}\mapsto c/a$
defines an isomorphism of this quotient with $\Q^\times$.
For $\alpha=\begin{psmallmatrix}1&0\\0&c\end{psmallmatrix}$,
one easily computes, using Remark \ref{rmrk:ie}, that
$\ii(\ee(\alpha))=\ii(\aa(\alpha))=|c|$.
It follows that both $\ii\circ\ee$ and $\ii\circ\aa$ map
$\begin{psmallmatrix}a&0\\b&c\end{psmallmatrix}$ to $|c/a|$,
and are therefore far from trivial.
\end{example}\noindent
We now deduce Theorem \ref{thm:QGintro}.
\begin{theorem}\label{thm:QG}
Let $G$ be a finite group, let $V$ be a finitely generated
$\Q[G]$-module, and put $\cS=\{L:L$ is a finitely generated
$\Z[G]$-module with $\Q\otimes_\Z L\cong V$ as
$\Q[G]$-modules$\}$. Then there exists a unique function
$\ia\colon\cS\times\cS\to\Q_{>0}$ such that
\begin{enumerate}[leftmargin=*, label={\upshape(\alph*)}]
\item if $L$, $L'$, $M$, $M'\in\cS$ and $L\cong L'$,
$M\cong M'$, then $\ia(L,M)=\ia(L',M')$;
\item if $L$, $M$, $N\in\cS$, then $\ia(L,M)\cdot\ia(M,N)=
\ia(L,N)$;
\item if $M\in\cS$, and $L\subset M$ is a submodule of
finite index, then with $H=\{\sigma\in\Aut M:\sigma L=L\}$ and
$\rho\colon H\to\Aut L$ mapping $\sigma\in H$ to $\sigma|L$, one
has
$$\ia(L,M)={\frac{(\Aut M:H)\cdot\#\ker\rho}{(\Aut L:\rho H)}}.$$
\end{enumerate}
\end{theorem}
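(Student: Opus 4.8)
The plan is to derive Theorem~\ref{thm:QG} from Theorem~\ref{thm:maingeneral} by taking $Z=\Z$, $Q=\Q$, $A=\Q[G]$ and $R=\Z[G]$; since $G$ is finite, $A$ is a semisimple $\Q$-algebra of finite dimension and $R$ is a sub-$\Z$-algebra with $\Q\cdot R=A$, so all hypotheses are met. For the \emph{existence} part I would put $\ia(L,M)=\ii(\aa(c))$ for any $R$-module commensurability $c\colon L\rightleftharpoons M$. Such a $c$ exists by Theorem~\ref{thm:maingeneral}(a), because $\Q\otimes_\Z L\cong V\cong\Q\otimes_\Z M$, and $\ii(\aa(c))$ is independent of the choice of $c$ by Theorem~\ref{thm:maingeneral}(c), so $\ia$ is well defined. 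Property~(b) is then the multiplicativity $\ii(\aa(d\circ c))=\ii(\aa(d))\,\ii(\aa(c))$ of Theorem~\ref{thm:compos}(b), together with commutativity of $\Q_{>0}$. For property~(a), given isomorphisms $\phi\colon L\to L'$, $\psi\colon M\to M'$ and a commensurability $c\colon L\rightleftharpoons M$, the composite $c_\psi\circ c\circ c_\phi^{-1}\colon L'\rightleftharpoons M'$ is again a commensurability; and since both legs of $\aa(c_\phi)$ and of $\aa(c_\psi)$ are group isomorphisms (so these correspondences have index $1$), Theorem~\ref{thm:compos}(b) gives $\ia(L',M')=\ia(L,M)$.

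The substantive point is property~(c). If $L\subseteq M$ has finite index, then $M/L$ is finite, hence $\Z$-torsion, so $\Q\otimes_\Z L\to\Q\otimes_\Z M$ is an isomorphism, $L\in\cS$, and the inclusion $\iota\colon L\to M$ is an isogeny. I would then unwind the definition of $\aa(\cdot)$ for the commensurability $c_\iota=(L,\id,\iota)\colon L\rightleftharpoons M$: one checks that $(\lambda,\lambda,\mu)\mapsto\mu$ identifies $\End c_\iota$ with the subring $\{\mu\in\End M:\mu L\subseteq L\}$ of $\End M$ (with $\lambda=\mu|_L$), hence $\Aut c_\iota=\{\mu\in\Aut M:\mu L=L\}=H$, and under this identification the two structure maps of $\aa(c_\iota)$ become precisely $\rho\colon H\to\Aut L$ and the inclusion $H\hookrightarrow\Aut M$. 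By Theorem~\ref{thm:maingeneral}(b) the correspondence $\aa(c_\iota)$ is a group commensurability, so $\rho$ and $H\hookrightarrow\Aut M$ are group isogenies; in particular $(\Aut M:H)$, $\#\ker\rho$ and $(\Aut L:\rho H)$ are all finite. Since the index of a group correspondence $(W,f,g)$ is $\ii(g)/\ii(f)$, this yields
$$\ia(L,M)=\ii(\aa(c_\iota))=\frac{(\Aut M:H)}{(\Aut L:\rho H)/\#\ker\rho}=\frac{(\Aut M:H)\cdot\#\ker\rho}{(\Aut L:\rho H)},$$
which is the asserted formula. I expect the identification of $\Aut c_\iota$ with $H$ and of the structure maps with $\rho$ and the inclusion — although it is essentially bookkeeping — to be the step requiring the most care; the genuine difficulty of the existence statement is entirely contained in Theorem~\ref{thm:maingeneral}, which I take as given.

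For \emph{uniqueness}, let $\ia'$ be any function satisfying (a)--(c). Taking $L=M=N$ in~(b) gives $\ia'(L,L)=1$, whence $\ia'(X,Y)\,\ia'(Y,X)=1$ for all $X,Y\in\cS$; so, using (a) and~(c), $\ia'$ is already determined on every pair $(P,Q)$ in which $P$ is isomorphic to a finite-index submodule of $Q$. It therefore suffices to connect any $L,M\in\cS$ by a chain of pairs of this type. Writing $\bar L=L/L_{\tors}$ and choosing $m\in\Z_{>0}$ annihilating the finite module $L_{\tors}$, multiplication by $m$ identifies $\bar L$ with the finite-index submodule $mL\subseteq L$; thus $(\bar L,L)$ — and likewise $(\bar M,M)$ — is of the required type. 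Finally $\bar L$ and $\bar M$ are torsion-free with $\Q\otimes_\Z\bar L\cong V\cong\Q\otimes_\Z\bar M$, so Theorem~\ref{thm:parta} provides an isogeny $\bar L\to\bar M$, which is injective (because $\bar L$ is torsion-free) and hence realises $\bar L$ as a finite-index submodule of $\bar M$. So $L,\bar L,\bar M,M$ is a chain of admissible pairs in $\cS$, and (b) forces $\ia'(L,M)=\ia(L,M)$, since $\ia$ itself satisfies (a)--(c).
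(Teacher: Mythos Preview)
Your proof is correct and follows essentially the same approach as the paper: existence by specialising Theorem~\ref{thm:maingeneral} to $R=\Z[G]$ and identifying $\aa(c_\iota)$ with the correspondence $\Aut L\stackrel{\rho}{\leftarrow}H\hookrightarrow\Aut M$, and uniqueness by passing to the $\Z$-free modules $mL\cong L/L_{\tors}$ and then realising one as a finite-index submodule of the other. You spell out more than the paper does (property~(a), the identification $\Aut c_\iota=H$, and the invocation of Theorem~\ref{thm:parta} for the embedding of torsion-free modules), but the structure of the two arguments is the same.
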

\begin{proof}
Existence immediately follows from Theorem \ref{thm:maingeneral}:
for $L$, $M\in \cS$, we may define $\ia(L,M)=\ii(\aa(c))$ for any
commensurability $c\colon L \rightleftharpoons M$.
In particular, property (c) follows by taking the commensurability
$c=(L,\id,i)\colon L\rightleftharpoons M$, where $i\colon L\rightarrow M$ is
the inclusion map, and noting that in this case, $\aa(c)$ is the
commensurability $\Aut L \stackrel{\rho}{\leftarrow}H\rightarrowtail \Aut M$.

To show uniqueness, observe that the
conditions of the theorem imply that the function $\ia$,
if it exists, is uniquely determined by its values on $\Z$-free modules.
Indeed, if $m_1$ and $m_2$ are the exponents of the $\Z$-torsion
submodule of $L$, respectively of $M$, then condition (b) requires that
$$
\ia(m_1L,L)\ia(L,M) = \ia(m_1L,m_2M)\ia(m_2M,M).
$$
Condition (c) determines
the values of $\ia(m_1L,L)$ and $\ia(m_2M,M)$, so $\ia(L,M)$ is determined by
$\ia(m_1L,m_2M)$. Clearly, the modules $m_1L$ and $m_2M$ are both $\Z$-free.

But if $L$, $M$ are $\Z$-free, and $\Q\otimes_{\Z} L\cong_{\Q[G]} \Q\otimes_{\Z} M$,
then there exists an embedding $L\rightarrowtail M$ with finite index,
in which case $\ia(L,M)$ is determined by conditions (a) and (c).
\end{proof}\noindent
The first interesting case of Theorem \ref{thm:QGintro} is already when
$G$ is the trivial group, so that finitely generated $\Z[G]$-modules are just
finitely generated abelian groups.
\begin{proposition}\label{prop:fingengps}
Let $L$, $M$ be finitely generated abelian groups. Then:
\begin{enumerate}[leftmargin=*, label={\upshape(\alph*)}]
\item there exists a commensurability $L\rightleftharpoons M$ if and only if
$L$ and $M$ have the same rank;
\item if $L\cong \Z^n\oplus L_{0}$ and $M\cong \Z^n\oplus M_{0}$,
where $L_{0}$ and $M_{0}$ are finite abelian groups,
then
$$
\ia(L,M) = \frac{(\#M_{0})^n\cdot \#\Aut M_{0}}{(\#L_{0})^n\cdot \#\Aut L_{0}}.
$$
\end{enumerate}
\end{proposition}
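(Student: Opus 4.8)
The plan is to obtain part~(a) directly from the general theory, and to pin down the value in part~(b) by reducing, via the multiplicativity property of $\ia$, to the $\Z$-free case, where condition~(c) of Theorem~\ref{thm:QG} applies.

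For part~(a), apply Theorem~\ref{thm:parta} (equivalently, Theorem~\ref{thm:maingeneral}(a)) with $Z=\Z$, $Q=\Q$, $A=\Q$ and $R=\Z$: a commensurability of $\Z$-modules $L\rightleftharpoons M$ exists if and only if $\Q\otimes_\Z L\cong\Q\otimes_\Z M$ as $\Q$-vector spaces, and since these are finite-dimensional this is equivalent to $\dim_\Q(\Q\otimes_\Z L)=\dim_\Q(\Q\otimes_\Z M)$, i.e. to $L$ and $M$ having the same rank.

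For part~(b) I would work in the setting of Theorem~\ref{thm:QG} with $G$ trivial and $V=\Q^n$, so that $\cS$ consists of all finitely generated abelian groups of rank~$n$, and by condition~(a) assume $L=\Z^n\oplus L_0$, $M=\Z^n\oplus M_0$. First recall the structure of $\Aut L$: since $\Hom(L_0,\Z^n)=0$, writing endomorphisms of $\Z^n\oplus L_0$ in $2\times 2$ block matrix form identifies $\Aut L$ with the semidirect product $\Hom(\Z^n,L_0)\rtimes(\Aut L_0\times\GL(n,\Z))$, where $\sigma=(\phi,(\psi,\gamma))$ acts by $(x,y)\mapsto(\gamma x,\phi(x)+\psi(y))$. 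Now fix a positive integer $m$ annihilating both $L_0$ and $M_0$ and put $L'=mL$, $M'=mM$; these are free of rank~$n$, hence lie in~$\cS$, and $L'\subset L$, $M'\subset M$ have finite index. Apply condition~(c) to $L'\subset L$: every $\sigma\in\Aut L$ satisfies $\sigma(mL)=m\sigma(L)=mL$, so the group $H$ of condition~(c) equals $\Aut L$; the restriction map $\rho\colon\Aut L\to\Aut(mL)\cong\GL(n,\Z)$ is surjective, because $\gamma\in\GL(n,\Z)$ is the restriction of $(x,y)\mapsto(\gamma x,y)$, so $\rho H=\Aut(mL)$; and $\ker\rho=\{(\phi,(\psi,\id))\}\cong\Hom(\Z^n,L_0)\times\Aut L_0$ has order $(\#L_0)^n\cdot\#\Aut L_0$. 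Hence condition~(c) gives $\ia(mL,L)=(\#L_0)^n\cdot\#\Aut L_0$, and symmetrically $\ia(mM,M)=(\#M_0)^n\cdot\#\Aut M_0$.

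To finish, $mL$ and $mM$ are both free of rank~$n$, hence isomorphic, so conditions~(a) and~(b) give $\ia(mL,mM)=\ia(mL,mL)=1$ (the last equality since $\ia(mL,mL)^2=\ia(mL,mL)$ in $\Q_{>0}$), while (b) also gives $\ia(L,mL)=\ia(mL,L)^{-1}$. Multiplicativity then yields
$$\ia(L,M)=\ia(L,mL)\cdot\ia(mL,mM)\cdot\ia(mM,M)=\ia(mL,L)^{-1}\cdot\ia(mM,M)=\frac{(\#M_0)^n\cdot\#\Aut M_0}{(\#L_0)^n\cdot\#\Aut L_0}.$$
I expect the only genuine work to be the explicit description of $\Aut L$ together with the verification that the three cardinal numbers occurring in condition~(c) are $(\Aut L:H)=1$, $\#\ker\rho=(\#L_0)^n\cdot\#\Aut L_0$ and $(\Aut(mL):\rho H)=1$; this is elementary linear algebra over $\Z$ but must be carried out carefully. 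Everything else is a formal consequence of the axioms characterising $\ia$ in Theorem~\ref{thm:QG}.
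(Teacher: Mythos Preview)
Your proof is correct and follows essentially the same strategy as the paper: both compute $\ia$ between $L$ and its free part via the semidirect-product description of $\Aut L$, then use multiplicativity to pass from $L$ to $M$. The only difference is cosmetic: the paper chooses the commensurability coming from the projection $f\colon L\twoheadrightarrow\Z^n$ and computes $\ii(\aa(c_f))$ directly from the definition, whereas you use the inclusion $mL\hookrightarrow L$ and invoke condition~(c) of Theorem~\ref{thm:QG}; the underlying group-theoretic calculation is identical.
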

\begin{proof}
Part (a) immediately follows from Theorem \ref{thm:mainintro}(a).

We now prove part (b). First we compute $\ia(\Z^n,L)$.
The split exact sequence
$$
0\rar L_{0}\rar L\stackrel{f}{\rar} \Z^n\rar 0
$$
induces a surjective map
$$
\Aut L\rar \Aut L_{0}\times\Aut \Z^n,
$$
whose kernel is easily seen to be canonically isomorphic to $\Hom(\Z^n, L_{0})$.
It follows that if
$c$ is the commensurability $(L,f,\id)\colon\Z^n\rightleftharpoons L$, then
the map $\Aut c\rar \Aut L$ is an isomorphism, while
the map $\Aut c \rar \Aut \Z^n$ is onto, with kernel of cardinality
$\#\Hom(\Z^n, L_{0})\cdot \#\Aut L_{0}=(\#L_{0})^n\cdot \#\Aut L_{0}$.
Hence $\ia(\Z^n,L)=\ii(\aa(c)) = (\#L_{0})^n\cdot \#\Aut L_{0}$.

It follows from the above computation that
\begin{align*}
\ia(L,M)&=\frac{\ia(\Z^n,M)}{\ia(\Z^n,L)}\\
&= \frac{(\#M_{0})^n\cdot \#\Aut M_{0}}{(\#L_{0})^n\cdot \#\Aut L_{0}},
\end{align*}
as claimed.
\end{proof}

\end{document}